\newtheorem{theorem}{Theorem}[section]
\newtheorem{lemma}[theorem]{Lemma}
\newtheorem*{lemma*}{Lemma}
\newtheorem{proposition}[theorem]{Proposition}
\newtheorem{corollary}[theorem]{Corollary}
\theoremstyle{definition}
\newtheorem{definition}[theorem]{Definition}
\newtheorem{question}[theorem]{Question}
\theoremstyle{remark}
\newtheorem{remark}[theorem]{Remark}
\numberwithin{equation}{section}
\newcommand{\abs}[1]{\lvert#1\rvert}
\newcommand{\norm}[1]{\lVert#1\rVert}
\newcommand{\C}{\mathbb{C}}
\newcommand{\W}{\mathscr{W}}
\newcommand{\R}{\mathbb{R}}
\newcommand{\X}{\mathbb{X}}
\newcommand{\Y}{\mathbb{Y}}
\newcommand{\x}{\mathfrak{X}}
\newcommand{\Ho}{\mathscr{H}}
\newcommand{\dtext}{\textnormal d}
\newcommand{\onto}{\xrightarrow[]{{}_{\!\!\textnormal{onto\,\,}\!\!}}}
\newcommand{\into}{\xrightarrow[]{{}_{\!\!\textnormal{into\,\,}\!\!}}}
\DeclareMathOperator{\dist}{dist}
\DeclareMathOperator{\loc}{loc}
\DeclareMathOperator{\Div}{div}
\DeclareMathOperator{\supp}{supp}
\def\le{\leqslant}
\def\ge{\geqslant}
\begin{document}

\title[Closures of Sobolev homeomorphisms]{The weak  and  strong closures \\of Sobolev homeomorphisms are the same}

\author[T. Iwaniec]{Tadeusz Iwaniec}
\address{Department of Mathematics, Syracuse University, Syracuse,
NY 13244, USA and Department of Mathematics and Statistics,
University of Helsinki, Finland}
\email{tiwaniec@syr.edu}

\author[J. Onninen]{Jani Onninen}
\address{Department of Mathematics, Syracuse University, Syracuse,
NY 13244, USA}
\email{jkonnine@syr.edu}
\thanks{ T. Iwaniec was supported by the NSF grant DMS-0800416 and the Academy of Finland project 1128331.
J. Onninen was supported by the NSF grant DMS-1001620.}

\subjclass[2010]{Primary 30E10; Secondary  46E35, 58E20}


\keywords{Approximation, Sobolev homeomorphisms, harmonic mappings, $p$-harmonic equation}

\begin{abstract}
 Let $\,\X\,$ and $\,\Y\,$  be bounded multiply connected Lipschitz domains in $\,\R^2\,$. We consider the class $\,\Ho_p (\X, \Y)\,$ of homeomorphisms $\,h \colon \X \onto \Y\,$ in the Sobolev space $\,\W^{1,p}(\X,\, \R^2)\,$. We prove that the weak and strong closures of
$\,\Ho_p (\X, \Y)\,$, $\,2\le p< \infty\,$, are equal. The importance of this result to the existence theory in the calculus of variations and anticipated applications to nonlinear elasticity are captured by Theorem \ref{Exisence}.
\end{abstract}

\maketitle

\section{Introduction}
The primary theme of this paper is about homeomorphisms $\, h  :\mathbb X \onto\mathbb Y\,$ between planar domains in the Sobolev space $\,\W^{1,p} (\X, \Y)\,$. Throughout  we use the notation $\,\Ho_p (\X, \Y)\,$ for the class of such mappings and reserve the symbols  $\overline{\Ho}_p (\X, \Y)$ and $\,\widetilde{\Ho}_p (\X, \Y)\,$ for the closures of $\,\Ho_p (\X, \Y)\,$ in strong and weak topology of $\,\W^{1,p}(\X, \Y)\,$, respectively.
\begin{theorem}\label{thm}
Let $\X$ and $\Y$ be $\ell$-connected Lipschitz domains. If $\ell \ge 2$ and $p\ge 2$, then
\begin{equation}\label{west}
\overline{\Ho}_p (\X, \Y) = \widetilde{\Ho}_p (\X, \Y) \, .
\end{equation}
Equality (\ref{west}) also holds for $\,\ell = 1\,$  if $\,p>2\,$,  but fails when $\,p=2\,$.
\end{theorem}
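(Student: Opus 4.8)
I would begin by noting that one inclusion is free: strong convergence implies weak convergence, so $\overline{\Ho}_p(\X,\Y)\subseteq\widetilde{\Ho}_p(\X,\Y)$ for every $p\ge1$ and every $\ell$. The whole of \eqref{west} is the opposite inclusion, i.e. the assertion that the strongly closed set $\overline{\Ho}_p(\X,\Y)$ is also weakly closed. Since $\overline{\Ho}_p(\X,\Y)$ is closed, it suffices to prove: \emph{if $h_j\colon\X\onto\Y$ are $\W^{1,p}$-homeomorphisms with $h_j\rightharpoonup h$ in $\W^{1,p}(\X,\R^2)$, then for each $\varepsilon>0$ there is a homeomorphism $g\colon\X\onto\Y$ of class $\W^{1,p}$ with $\|g-h\|_{\W^{1,p}(\X)}<\varepsilon$}; letting $\varepsilon\to0$ then places $h$ in $\overline{\Ho}_p(\X,\Y)$.

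The first task is to read off the structure of the limit $h$. Along a subsequence $h_j\to h$ in $L^p$ and a.e., and for $p>2$ the embedding $\W^{1,p}\hookrightarrow C^{0,1-2/p}(\overline\X)$ upgrades this to locally uniform convergence. Weak continuity of the Jacobian --- valid in the plane for $p\ge2$ --- gives $\det Dh\ge0$ a.e., while a degree argument resting on the injectivity of the $h_j$ forces $h$ to be \emph{monotone} in Morrey's sense (point preimages are continua, $h$ does not fold); from monotonicity and Morrey's oscillation estimate, $h$ has a continuous representative even in the borderline case $p=2$, and $h$ maps $\X$ onto $\Y$ carrying the $\ell$ boundary curves of $\X$ onto those of $\Y$ with their inherited cyclic orders. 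Thus the problem is reduced to showing that every monotone Sobolev map with this boundary correspondence lies in $\overline{\Ho}_p(\X,\Y)$.

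For the approximation I would perform a localized surgery. Straighten $\partial\X$ and $\partial\Y$ by finitely many bi-Lipschitz charts and let $\mu$ be the weak-$*$ limit of $|Dh_j-Dh|^p\,\dtext x$; away from any neighborhood of $\supp\mu$ the $h_j$ converge to $h$ in the norm, so the failure of $h$ to be a homeomorphism is confined to finitely many small ``bad'' regions --- neighborhoods $U$ of the nondegenerate continua that $h$ collapses, and thin collars along $\partial\X$ where the boundary correspondence degenerates. On each such piece, perturb the one-dimensional trace $h|_{\partial U}\in\W^{1-1/p,p}$ to a homeomorphism of $\partial U$ onto a Jordan curve in $\overline\Y$ --- cheap, since this happens on a curve --- and extend it across $U$ to a homeomorphism onto the Jordan subdomain it bounds, via the $p$-harmonic extension together with a Rad\'o--Kneser--Choquet-type injectivity theorem (or a direct Schoenflies-type construction), with $p$-energy on $U$ no larger than that of $h$ on $U$ up to an error we may make as small as we please; since $\int_U|Dh|^p\to0$ as $|U|\to0$ by absolute continuity, the total $\W^{1,p}$-perturbation is $o(1)$. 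Gluing these homeomorphic pieces back to $h$ and repairing the finitely many seams by short homeomorphic isotopies supported in the collars --- using that in a Lipschitz $\ell$-connected planar domain any two nearby admissible boundary parametrizations are joined by an isotopy of vanishingly small $\W^{1,p}$-size --- yields the required $g\in\Ho_p(\X,\Y)$.

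I expect the surgery step --- opening up a collapsed continuum, or a degenerate boundary arc, at controlled $\W^{1,p}$-cost --- to be the main obstacle and the place where the hypotheses enter. For $p>2$ the H\"older cushion from $\W^{1,p}\hookrightarrow C^{0,1-2/p}$ makes the error estimates automatic and the construction runs for every $\ell\ge1$. At the critical exponent $p=2$ that cushion is gone; roughly speaking, rerouting a collapsed piece that meets $\partial\X$ carries a logarithmic (conformal-modulus) cost which can be absorbed only when there is a handle to route it through, and such a handle is precisely what an extra boundary component supplies --- hence $\ell\ge2$. When $\ell=1$ and $p=2$ this room is unavailable and \eqref{west} genuinely fails: one produces an explicit sequence of homeomorphisms of the disc onto itself, converging weakly in $\W^{1,2}$, whose limit cannot be reached in the norm by homeomorphisms, so that it belongs to $\widetilde{\Ho}_2(\dd,\dd)\setminus\overline{\Ho}_2(\dd,\dd)$.
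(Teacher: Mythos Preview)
Your reduction to a monotone limit is correct, and the counterexample for $\ell=1$, $p=2$ is the right one (the paper uses M\"obius self-maps of the disc converging weakly to a constant). But the core of your argument --- the ``localized surgery'' paragraph --- does not go through as written, and differs in an essential way from what the paper does.

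The first concrete gap is the claim that the failure of injectivity of $h$ is ``confined to finitely many small bad regions''. A monotone map can collapse \emph{uncountably} many disjoint continua; the set $\{x:\det Dh(x)=0\}$ need not be small, and there is no reason the defect measure $\mu$ should have support of small measure, or that covering $\supp\mu$ by finitely many small balls should isolate the degeneracy. The paper never attempts such a localization. Instead it partitions the \emph{target} $\Y$ into three overlapping meshes of small squares $\mathcal A,\mathcal B,\mathcal C$ and pulls each square back to a simply connected ``cell'' in $\X$; on every cell it replaces $h$ by the $p$-harmonic extension of its boundary values, proving univalence via a $p$-harmonic Rad\'o--Kneser--Choquet theorem together with a Hurwitz-type result for $p$-harmonic maps. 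The point is that the replacement is performed \emph{everywhere}, not only where $h$ is bad, and the smallness of $\|h^\ast-h\|_{\mathscr R^p}$ comes from the energy inequality $\mathcal E[h^\ast]\le\mathcal E[h]$ plus uniform closeness, not from an $o(1)$ bound on $\int_U|Dh|^p$.

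The second gap is that your construction tries to work from $h$ alone. The paper is explicit that this is not known to suffice (it is posed as an open question), and its replacements lean on the approximating homeomorphisms $h_j$ in a nontrivial way: to show that the $p$-harmonic map on a cell is a diffeomorphism one first replaces $h_j$ on slightly smaller Jordan subcells $\Omega_n^{j_n,\delta_n}$ (where $h_j$ is a genuine homeomorphism onto a convex set), passes to the limit, and invokes weak $\mathscr L^1$-convergence of the Jacobians to rule out the degenerate alternative in the Hurwitz theorem. Your ``perturb the trace on $\partial U$ to a homeomorphism and extend'' skips exactly this step.

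Finally, your heuristic for why $\ell\ge2$ is needed at $p=2$ (a ``handle'' to reroute conformal modulus) is not the mechanism in the paper. What actually fails for $\ell=1$, $p=2$ is the uniform modulus-of-continuity estimate (the inequality of type $|h(x_1)-h(x_2)|^2\lesssim(\log)^{-1}\cdot\mathcal E[h]$ requires $\ell\ge2$), and with it the reflection/extension to a larger domain with uniformly bounded energy. For $\ell\ge2$ the paper reflects across each boundary circle to enlarge $\X$ and $\Y$ and thereby restores equicontinuity and weak $\mathscr L^1$-continuity of the Jacobian; for $p>2$ the Sobolev embedding supplies equicontinuity directly and no reflection is needed.
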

For simply connected domains the equality $\,\overline{\Ho}_2 (\X, \Y) = \widetilde{\Ho}_2 (\X, \Y) \,$  also holds under suitable normalizations of the mappings $\, h  :\mathbb X \onto\mathbb Y\,$. This case is fully resolved in \S\ref{secnat}.

It should be noted that the weak closure $\,\widetilde{\mathscr H_p}(\mathbb X, \mathbb Y)\,$ is none other than the set of all weak limits of homeomorphisms $\, h  :\mathbb X \onto\mathbb Y\,$ in the Sobolev space $\,\mathscr W^{1,p}(\mathbb X, \mathbb Y)\,$. This fact is not obvious but follows from Theorem~\ref{thmain},  see Remark \ref{rem1}.

The interest in Sobolev homeomorphisms comes  naturally from Geometric Function Theory (GFT), Calculus of Variations (CV), and from Nonlinear Elasticity (NE) where prospective applications to elastic plates and thin films gain additional motivation.
In GFT we seek, as a generalization of  the celebrated \textit{Riemann Conformal Mapping Problem}, a homeomorphism $h\in \Ho_p (\X, \Y)$ with smallest $\,p$ -harmonic type energy integrals.  It is certainly unrealistic to expect that the infimum energy within the class $\,\mathscr H_p(\mathbb X\,\mathbb Y)\,$ will always be attained; injectivity is often lost when passing to the weak limit of the minimizing sequence. The best example is the collapsing phenomenon in the minimization of the Dirichlet integral for mappings between circular annuli~\cite{AIM}. It is new, even for the basic case of the Dirichlet energy, that
\begin{equation}\label{dir}
\inf_{h\in \Ho_2 (\X, \Y)}\iint_{\X} \abs{Dh(x)}^2 \, \dtext x = \inf_{h\in \widetilde{\Ho}_2 (\X, \Y)}\iint_{\X} \abs{Dh(x)}^2 \, \dtext x \, .
\end{equation}

At this point let us stress that the Dirichlet energy integrals are quintessential in the study of harmonic mappings~\cite{CIKO, Dub, IKKO, Jo, Job, JS}. In GFT all known examples of mappings with smallest energy appear to be strong limits of homeomorphisms. This observation is now a fact confirmed by Theorem~\ref{thm}.

In the proof of Theorem~\ref{thm} we first consider mappings $\,h\,$ which are weak limits of homeomorphisms $\,h_j \colon \X \onto \Y\,$ in the Sobolev space $\W^{1,p} (\X, \Y)$. They  need not converge strongly. Nevertheless, for the proof of Theorem~\ref{thm}, it seems natural to try suitable corrections in the sequence $\,h_j\,$ to gain strong convergence. This did not work. The strong approximation will be achieved by modifying the limit map $\,h\,$ instead.
 The first observation in our approach is that the weak limit of homeomorphisms $\,h_j\,$  extends continuously to  a monotone map $h \colon \overline{\X} \onto \overline{\Y}$ between the closures. Monotonicity, the concept of C.B. Morrey~\cite{Mor}, simply means that the preimage $h^{-1} (C)$ of a continuum in $\,C \in\overline{\Y}$ is a continuum in $\overline{\X}$. The main ingredient in the proof of Theorem \ref{thm} is a series of step by step replacements of $\,h\,$ by piecewise   $p$-harmonic diffeomorphisms. In making such replacements we heavily rely on the weakly converging sequence of homeomorphisms $\,h_j  : \mathbb X \onto \mathbb Y\,$.
   WE believe that once the monotonicity of $\,h\,$ is established or assumed the sequence $\{h_j\}_{j=1}^\infty\,$ should play no role in the construction of strong approximation of $\,h\,$. This rises even more general question.

\begin{question}\label{Q1}
Let $\X$ and $\Y$ be planar domains. If  $h \colon \overline{\X} \onto \overline{\Y}$ is a monotone Sobolev mapping in $\W^{1,p} (\X, \Y)$, can $h$ be approximated strongly in $\W^{1,p} (\X, \Y)\,$ by homeomorphisms  $\,h_j\, \colon \X \onto \Y$?
\end{question}
Uniform approximation of monotone mappings with homeomorphisms is of great interest in topology. We refer to  Rad\'{o}~\cite{Ra} and Youngs~\cite{Yo, Yo1} for the earliest contributions and to~\cite{Mc, Rab, Whb} for further information. In the Sobolev setting the approximation problem is at the very heart of GFT, and is important in mathematical models of elastic deformations.
 Indeed, Question \ref{Q1} is closely related to  the Ball-Evans question:

\begin{question}\cite{Ba, Ba2}
If $h \in \W^{1,p}(\X, \Y)$ is invertible, can $h$ be approximated in $ \W^{1,p}(\X, \Y)$ by piecewise affine invertible mappings?
\end{question}

This question is of great interest in the study of neohookean energy functionals~\cite{Ba1, BPO1, CL, Ev, SiSp}. The Ball-Evans problem for planar bi-Sobolev mappings which are smooth except for a finite number of points has been resolved in~\cite{Mo}. For analogous questions in the planar H\"older continuous setting see~\cite{BM}, and  for bi-Lipschitz homeomorphisms see~\cite{DP}. The difficulties in saving injectivity in the process of piece-wise linear (equivalently smooth if $n=2,3$) approximation are already recognized in~\cite{Ba,SeS}. In~\cite{IKO2} we showed that every homeomorphism $h \colon \mathbb X \to \mathbb Y$ between planar open sets that belongs to the Sobolev space
$\W^{1,p} (\mathbb X , \mathbb Y)$, $1<p<\infty$, can be approximated in the norm topology of this space by $\mathscr C^\infty$-smooth diffeomorphisms,  See also~\cite{IKO3} for the approximation up to the boundary.

In this paper we actually prove stronger statement than Theorem~\ref{thm}.  Namely, we obtain an approximation by diffeomorphisms which  have the same boundary values as $\,h\,$. Precisely, we have

\begin{theorem}\label{thmain}
Let $\X$ and $\Y$ be $\ell$-connected Lipschitz domains, $2\le \ell < \infty$, and let $h_j \colon \X \onto \Y$ be homeomorphisms converging weakly in $\W^{1,p}(\X, \Y)$, $2 \le p < \infty$, to a mapping $h\in \W^{1,p} (\X, \Y)$.  Then there exists a sequence of $\mathscr C^\infty$-diffeomorphisms
\[h_j^\ast \colon {\X} \onto {\Y}, \qquad h_j^\ast \in h+ \W^{1,p}_\circ (\X, \Y)\]
converging strongly in $\W^{1,p} (\X, \Y)$ to $h$. This also holds  for simply connected Lipschitz domains if $p>2$.
\end{theorem}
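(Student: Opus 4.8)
The plan is to argue in two stages: first reduce the weak limit $h$ to a monotone, continuous-up-to-the-boundary map and identify its boundary behavior on each of the $\ell$ boundary curves; then run a finite cascade of replacements that successively upgrades $h$ on cleverly chosen subdomains to a piecewise $p$-harmonic diffeomorphism, interpolating back to the original boundary data. The first stage is essentially soft: by weak $\W^{1,p}$ convergence with $p\ge 2$ (so $p>n=2$ or $p=2=n$ together with the multiply-connected hypothesis), the $h_j$ are equicontinuous on compacta, hence $h$ is continuous; each $h_j$ is a homeomorphism $\X\onto\Y$, so by a standard degree/monotonicity argument the locally uniform limit $h\colon\overline{\X}\onto\overline{\Y}$ is monotone in the sense of Morrey, and it maps $\partial\X$ onto $\partial\Y$ respecting the correspondence of boundary components. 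Crucially, the sequence $\{h_j\}$ is retained as a ``scaffold'': whenever we need to know that $h$ does not fold a particular region the wrong way, or that a particular Jordan subcurve in $\X$ is mapped to a curve bounding the right side in $\Y$, we read it off from the homeomorphisms $h_j$ and pass to the limit.

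For the second stage I would exhaust $\X$ by a finite system of Lipschitz ``cells'' — a tubular collar of $\partial\X$ plus finitely many interior quadrilateral-type pieces whose images under $h$ are (after the scaffold argument) again topological cells in $\Y$ — and on each cell replace $h$ by the $p$-harmonic map with the same boundary values as $h$ on that cell. Here one invokes the existence and regularity theory for the $p$-harmonic system in the plane, together with the Radó–Kneser–Choquet-type principle (and its $p$-harmonic analogue, the engine behind the papers cited as \cite{CIKO, IKKO, JS}) guaranteeing that a $p$-harmonic map onto a convex — or, after a bi-Lipschitz straightening, suitably star-shaped — target cell with monotone boundary data is a diffeomorphism. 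The replacements are done one cell at a time, each time producing a new map that agrees with the previous one off the cell, is $\W^{1,p}$-close to it (because $p$-harmonic maps minimize the $p$-energy with given boundary values, and the boundary values converge), and is a diffeomorphism on the cell; after finitely many steps one has a global piecewise $p$-harmonic diffeomorphism $h^\ast$ with $h^\ast\in h+\W^{1,p}_\circ(\X,\Y)$, and finally one smooths the finitely many interfaces by the diffeomorphic smoothing of \cite{IKO2, IKO3}, keeping the boundary values fixed by working in a collar. Diagonalizing over the exhaustion and over the closeness parameters produces the sequence $h_j^\ast$.

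The main obstacle — and the reason the scaffold $\{h_j\}$ cannot simply be discarded (cf.\ Question~\ref{Q1}) — is controlling the \emph{topology} of the decomposition: one must produce a finite subdivision of $\X$ such that $h$ restricted to each piece has a well-behaved image cell in $\Y$ (no degenerate or orientation-reversed images, boundary data genuinely monotone onto the boundary of a cell that can be straightened to a convex set), and this is exactly where monotonicity alone is too weak and where the $\ell\ge 2$ hypothesis enters: the multiply connected topology is used to anchor the construction on the boundary and to rule out the collapsing phenomenon of \cite{AIM} that obstructs the $\ell=1$, $p=2$ case. A secondary technical point is the passage from ``convex target'' to the actual curvilinear cells in $\Y$, which requires uniformly bi-Lipschitz charts whose constants do not blow up as the exhaustion is refined — handled by the Lipschitz regularity of $\partial\Y$ — and the matching of $p$-energy across interfaces, which is quantitative and uses that $p$-energy is lower semicontinuous and that $p$-harmonic replacement does not increase it.
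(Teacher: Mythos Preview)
Your high-level picture is right --- monotonicity of the limit, $p$-harmonic replacements on cells, a Rad\'o--Kneser--Choquet principle --- but the replacement step as you describe it has a real gap. You propose to replace $h$ on each cell $U$ by the $p$-harmonic map with the \emph{same boundary values as $h$} on $\partial U$, and then invoke a RKC principle for \emph{monotone} boundary data. For $p\ne 2$ no such principle is available off the shelf: the Alessandrini--Sigalotti theorem the paper uses requires the boundary map to be a \emph{homeomorphism} onto $\partial G$, and $h|_{\partial U}$ is only monotone (it may collapse arcs). The paper does \emph{not} bypass this by proving a monotone $p$-RKC. Instead it uses the scaffold $\{h_j\}$ in a much more concrete way than you indicate: the cells are defined as preimages $U=h^{-1}(\mathbb Q\cap\overline\Y)$ of squares in the \emph{target} (so the needed convexity is automatic), then for each cell one takes the genuine homeomorphisms $h_{j_n}$ restricted to slightly shrunk Jordan subdomains $\Omega_n^{j_n,\delta_n}\Subset U$ onto convex sets $\mathbb Q_n\cap\Y^{\delta_n}$, applies RKC to \emph{those} (homeomorphic boundary data), and passes to the weak limit. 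The limit is $p$-harmonic on $U$ with the right boundary values, but why is it still a diffeomorphism? This is the missing ingredient in your sketch: the paper proves a \emph{$p$-harmonic Hurwitz theorem} (a $c$-uniform limit of $p$-harmonic orientation-preserving diffeomorphisms is either a diffeomorphism or has $J\equiv 0$), via the quasiregularity of complex gradients and a Beltrami-type reduction, and rules out $J\equiv 0$ by weak continuity of the Jacobian over $U$. Without this device your direct replacement cannot be justified for $p>2$.

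Two further structural points. First, your decomposition is on the wrong side: partitioning $\X$ into ``a tubular collar of $\partial\X$ plus interior quadrilaterals'' gives no control on the shape of the images, whereas RKC needs convex \emph{targets}; the paper partitions $\Y\cup\Upsilon_i$ (after reducing to circular domains by a bi-Lipschitz map whose composition operator is continuous --- itself a nontrivial point) and pulls back. Second, a single mesh of disjoint cells cannot cover, and nothing in your outline explains how the per-cell replacements glue to a \emph{global} homeomorphism; the paper handles this with three overlapping families $\mathcal A,\mathcal B,\mathcal C$ of disjoint squares, performs the replacements in three passes $h\to h_{\mathcal A}\to h_{\mathcal A\mathcal B}\to h_{\mathcal A\mathcal B\mathcal C}=H$, and then iterates this construction once for each boundary circle $\Upsilon_1,\dots,\Upsilon_\ell$. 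Strong (not merely weak) convergence comes from letting the square size $\rho\to 0$: each pass does not increase energy, the maps converge uniformly to $h$, hence weakly, hence the energies converge, hence strongly. Your ``because the boundary values converge'' does not supply this mechanism.
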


Theorem \ref{thmain} has  applications to a study of traction free problems in NE~\cite{Bac, Ba3}. Suppose we are given a well defined energy functional
\begin{equation}\label{energ}
\mathcal E_\mathbb X [h] =  \iint_\mathbb X \mathbf E(x,h, Dh ) \,\,\textnormal{d} x\, , \qquad \textnormal{for Sobolev mappings } \;\;\; h \in \mathscr W^{1,p}(\mathbb X, \mathbb Y) \, .
\end{equation}

 Here the stored energy function  $\,\mathbf E : \mathbb X \times \mathbb Y \times \mathbb R^{2\times 2} \, \rightarrow \mathbb R_+\,$  satisfies the usual Carathe\'{o}dory regularity conditions and coercivity  $\,\mathbf E(x,h, \xi ) \succcurlyeq |\xi|^p\,,$  $\,p\geqslant 2\, $.
However, the key hypothesis on $\,\mathbf E\,$ is the Morrey's quasiconvexity~\cite{Moq}, which just amounts to the lower semicontinuity:
\begin{equation}\label{lowsemi}
\mathcal E_\mathbb X [h] \leqslant \liminf \mathcal E_\mathbb X [h_j] \, , \qquad \textnormal{whenever} \quad h_j \rightharpoonup h \;\;\;\textnormal{ weakly in}\;\;\mathscr W^{1,p}(\mathbb X, \mathbb Y) \, .
\end{equation}

  We shall actually restrict (\ref{energ}) to the subclass $\,\widetilde{\mathscr H_p}(\mathbb X, \mathbb Y)\subset \mathscr W^{1,p}(\mathbb X, \mathbb Y\,)\,$ of weak limits of homeomorphisms.
  Recall our standing assumption (Lipschitz regularity) on the domains $\,\mathbb X\,$ and $\,\mathbb Y\,$ which  is sufficient to have a  continuous extension $\,h :\overline{\mathbb X} \onto \overline{\mathbb Y}\,$ of $\,h \in \widetilde{\mathscr H_p}(\mathbb X, \mathbb Y)\,$ together with a uniform bound of the modulus of continuity by means of the energy. This is immediate from Lemma~\ref{lem211} and Lemma~\ref{lem212}. Now it makes sense to speak of the \textit{partial boundary condition} for $\,h \in \widetilde{\mathscr H_p}(\mathbb X, \mathbb Y)\,$
  \begin{equation}\label{Cond}
  h(x) = h_\circ (x) \;,\;\;\;\textnormal{for}\;\; x \in \mathfrak X \subset \partial \mathbb X
  \end{equation}
  where $\,h_\circ\,$, referred to as a  boundary data, is  a Sobolev homeomorphism in $\,\mathscr H_p(\mathbb X, \mathbb Y)\,$ and $\,\mathfrak X\,$ is a closed subset of $\,\partial \mathbb X\,$. Condition (\ref{Cond})  is void when the set $\,\mathfrak X \subset \partial \mathbb X\,$ is empty. This latter case, being energy-minimal analogue of the Riemann Mapping Problem, became of great interest in GFT.  Another important borderline case, which might be of interest in NE, occurs when $\,\mathfrak X\,=\,\partial \mathbb X\,$. There is, however, a marked difference between our setting and the classical boundary value problems in NE. Here we find the  minimizers in the class $\,\overline{\mathscr H_p}(\mathbb X, \mathbb Y)\,$, which is in the closet  proximity to homeomorphisms. The Lagrange-Euler equations are no longer available~\cite{Ba4, Ba5, SS}; they have to be
 replaced by the inner-variational  equations; also known as  \textit{energy-momentum} or \textit{equilibrium} equations, etc~\cite{Cob, SSe, Ta}. Various results in this direction were obtained in~\cite{BOP, CIKO, IKO4, Mo, Ya}.

 In the general case of (\ref{Cond}) we say that  $\,h\,$ is \textit{traction free} on $\,\partial\mathbb X \setminus \mathfrak X\,$, see~\cite{Bac, Ba3}.  We consider the family $\,\mathscr A = \mathscr A_\mathfrak X( h_\circ ) \subset \mathscr H_p(\mathbb X, \mathbb Y)\,$ of homeomorphisms which satisfy (\ref{Cond}) to inquire into its infimum energy.
  \begin{equation}\label{infEnergy}
  \mathcal E(\mathscr A) = \inf \{ \,\mathcal E_\mathbb X[h] \;; \;\;h \in \mathscr A_\mathfrak X( h_\circ )\,\}
  \end{equation}
  Let $\,\overline{\mathscr A}\,$ and $\,\widetilde{\mathscr A}\,$ denote the closures of $\,\mathscr A\,$  in strong and weak topology of the Sobolev space $\,\mathscr W^{1,p}(\mathbb X, \mathbb Y)\,$, respectively.  The essence of Theorem \ref{thmain} is now captured by the following result
  \begin{theorem} \label{Exisence} Let $\,\mathbb X\,$ and $\,\mathbb Y\,$ be Lipschitz domains. Under the assumption on the energy integral, specified above for equation (\ref{energ}), there exists $\,f \in \overline{\mathscr A} \subset \overline{\mathscr H_p}(\mathbb X, \mathbb Y)\,$ such that
  \begin{equation}
  \iint_\mathbb X \mathbf E(x,f, Df ) \,\,\textnormal{d} x\; =  \mathcal E(\mathscr A)
   \end{equation}
   Here one particular case, when $\,\mathbb X\,$ and $\,\mathbb Y\,$ are simply connected and the exponent $\,p= 2\,$, requires  that the set $\,\mathfrak X\subset \partial \mathbb X\,$ contains at least three distinct points, see Section \S\ref{secnat}. \\In particular, letting  $\,\mathfrak X = \varnothing\,$, we obtain
   \begin{equation}
   \mathcal E_\mathbb X[f] \; =\; \inf_{h \in \mathscr H_p(\mathbb X, \mathbb Y)}\iint_\mathbb X \mathbf E(x,h, Dh ) \,\,\textnormal{d} x\; = \inf_{h \in \widetilde{\mathscr H_p}(\mathbb X, \mathbb Y)}\iint_\mathbb X \mathbf E(x,h, Dh ) \,\,\textnormal{d} x\;
   \end{equation}
  \end{theorem}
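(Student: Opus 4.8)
The plan is to run the direct method of the calculus of variations on the homeomorphism class $\mathscr A=\mathscr A_{\mathfrak X}(h_\circ)$ and then to invoke Theorem~\ref{thmain} in order to land the resulting weak limit inside the strong closure $\overline{\mathscr A}$ without changing its energy. Since $h_\circ\in\mathscr A$, the number $\mathcal E(\mathscr A)$ in (\ref{infEnergy}) is finite; choose homeomorphisms $h_j\in\mathscr A$ with $\mathcal E_\X[h_j]\to\mathcal E(\mathscr A)$. The coercivity $\mathbf E(x,h,\xi)\succcurlyeq\abs{\xi}^p$ bounds $\iint_\X\abs{Dh_j}^p\,\dtext x$, and the bounded target $\Y$ bounds $\norm{h_j}_{\infty}$; hence $\{h_j\}$ is bounded in $\W^{1,p}(\X,\R^2)$ and, since $1<p<\infty$, a subsequence satisfies $h_j\rightharpoonup h$ weakly in $\W^{1,p}(\X,\Y)$. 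By Lemma~\ref{lem211} and Lemma~\ref{lem212} the maps $h_j$ share a modulus of continuity on $\overline{\X}$ controlled by their (bounded) energies, so after a further subsequence $h_j\to h$ uniformly on $\overline{\X}$; thus $h$ extends to a continuous monotone map $\overline{\X}\onto\overline{\Y}$, and passing to the limit in $h_j|_{\mathfrak X}=h_\circ|_{\mathfrak X}$ shows that $h$ still obeys the partial boundary condition (\ref{Cond}). Finally, quasiconvexity is precisely the weak lower semicontinuity (\ref{lowsemi}), so
\[
\mathcal E_\X[h]\ \le\ \liminf_{j\to\infty}\mathcal E_\X[h_j]\ =\ \mathcal E(\mathscr A).
\]

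Next I apply Theorem~\ref{thmain} to the weakly convergent sequence $h_j\rightharpoonup h$: it produces $\mathscr C^\infty$-diffeomorphisms $h_j^\ast\colon\X\onto\Y$ with $h_j^\ast\in h+\W^{1,p}_\circ(\X,\Y)$ and $h_j^\ast\to h$ strongly in $\W^{1,p}(\X,\Y)$. Each $h_j^\ast$ is a Sobolev homeomorphism of $\X$ onto $\Y$ that shares the boundary values of $h$ on $\partial\X$, so by the previous paragraph $h_j^\ast|_{\mathfrak X}=h|_{\mathfrak X}=h_\circ|_{\mathfrak X}$ and therefore $h_j^\ast\in\mathscr A$. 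Consequently $\mathcal E_\X[h_j^\ast]\ge\mathcal E(\mathscr A)$ for every $j$, and $h=\lim_j h_j^\ast\in\overline{\mathscr A}\subset\overline{\Ho}_p(\X,\Y)$.

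It remains to prove the reverse inequality $\mathcal E_\X[h]\ge\mathcal E(\mathscr A)$, i.e.\ that the energy is not lost in the passage to the \emph{strong} limit $h_j^\ast\to h$. Along a subsequence we have $Dh_j^\ast\to Dh$ a.e.\ with a common $L^p$ majorant and $h_j^\ast\to h$ uniformly; the Carath\'eodory property of $\mathbf E$ then gives $\mathbf E(x,h_j^\ast,Dh_j^\ast)\to\mathbf E(x,h,Dh)$ a.e., with an equi-integrable majorant — trivially so for the Dirichlet integrand $\abs{\xi}^p$, and in general by virtue of the growth conditions built into the hypotheses on $\mathbf E$. By Vitali's (or the dominated) convergence theorem $\mathcal E_\X[h_j^\ast]\to\mathcal E_\X[h]$, whence $\mathcal E_\X[h]=\lim_j\mathcal E_\X[h_j^\ast]\ge\mathcal E(\mathscr A)$. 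Together with the display above this yields $\mathcal E_\X[h]=\mathcal E(\mathscr A)$, so $f=h$ is the required map. When $\mathfrak X=\varnothing$ one has $\mathscr A=\Ho_p(\X,\Y)$; combining the above with Theorem~\ref{thm} (which identifies $\overline{\Ho}_p(\X,\Y)$ with $\widetilde{\Ho}_p(\X,\Y)$ for $\ell\ge2$, and for $\ell=1$ when $p>2$) and the same strong-continuity remark yields the displayed chain of equalities. The sole case escaping this reasoning, $p=2$ with $\X$ and $\Y$ simply connected, fails exactly because the conformal self-maps let minimizing sequences drift off to the boundary; fixing the values on the three prescribed points of $\mathfrak X$ restores compactness of $\mathscr A$, and the remaining bookkeeping is carried out in \S\ref{secnat}.

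All of the genuine difficulty is concentrated in Theorem~\ref{thmain}; granted that, the steps above that still require care are (i) verifying that the partial boundary data survive both the weak passage to the limit — which is where the uniform modulus of continuity of Lemmas~\ref{lem211}--\ref{lem212}, hence the Lipschitz regularity of $\X$ and $\Y$, is used — and the subsequent diffeomorphic correction, and (ii) the passage to the limit in the energy along the strong approximation. I expect (ii) to be the principal technical obstacle: absent a two-sided $p$-growth control on $\mathbf E$, the equi-integrability of $\mathbf E(\,\cdot\,,h_j^\ast,Dh_j^\ast)$ has to be extracted directly from the structural hypotheses on the stored-energy function.
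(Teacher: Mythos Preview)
Your proposal is correct and is exactly the argument the paper has in mind. The paper does not give a separate proof of this theorem; it is stated as a consequence of Theorem~\ref{thmain} (``The essence of Theorem~\ref{thmain} is now captured by the following result''), with the direct-method bookkeeping left to the reader. You have supplied precisely that bookkeeping: a minimizing sequence in $\mathscr A$, weak compactness from coercivity, uniform convergence on $\overline{\X}$ from Lemmas~\ref{lem211}--\ref{lem212} (so that the constraint on $\mathfrak X$ passes to the limit), weak lower semicontinuity to get $\mathcal E_\X[h]\le\mathcal E(\mathscr A)$, and then the invocation of Theorem~\ref{thmain} to place $h$ in $\overline{\mathscr A}$ via diffeomorphisms $h_j^\ast$ that inherit the boundary data of $h$ and hence lie in $\mathscr A$.

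Your identification of point (ii) as the only residual issue is accurate and honest. The reverse inequality $\mathcal E_\X[h]\ge\mathcal E(\mathscr A)$ does require strong continuity of $\mathcal E_\X$ on $\W^{1,p}$, which is not a consequence of quasiconvexity and coercivity alone; it needs an upper $p$-growth bound on $\mathbf E$. The paper's phrase ``the usual Carath\'eodory regularity conditions'' is evidently meant to include such a bound (and for the Dirichlet integrand $|\xi|^p$ highlighted in~(\ref{dir}) it is automatic), so your Vitali argument goes through. This is a genuine hypothesis, not an oversight on your part, and you are right to flag it.
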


Some comments about the proof of Theorem \ref{thmain} merit mentioning here. First we conveniently reduce the problem to the case of domains bounded by circles (Schottky domains). Surprisingly, this takes some efforts to justify such a reduction; namely, continuity (in the Sobolev norm) of the composition operator with a bi-Lipschitz transformation has to be secured.  The main idea of the proof, like in~\cite{IKO2},  is to partition $\,\mathbb X\,$ into small cells $\,U \subset \mathbb X\,$ and replace $\,h\,$ in  $\,U \,$ by a $\,p$ -harmonic diffeomorphism.  However, the novelty lies in the replacements over the boundary cells, see \S\ref{secphar}. We explore topological properties of monotone mappings  to see the geometry of boundary cells; they are certainly simply connected domains.  Then, with the assistance of the weakly converging  sequence $\,\{h_j\}\,$,   we construct  a $\,p$ -harmonic diffeomorphism having the same values on the boundary of a cell as  $\,h\,$. Here, for $\,p > 2\,$,  a $\,p$ -harmonic variant of the classical Hurwitz theorem might be of independent interest.

At this point it may be worth mentioning our definition of a $\,p$-harmonic mapping.
Let $\,h = u + i v\,$ be a complex-valued function in the  Sobolev space $\,\mathscr W^{1,p}(\mathbb X , \mathbb C )\,, 1 < p < \infty\,$. We consider the gradient map $\,\nabla h  = (\nabla u , \nabla v ) \,: \mathbb X \into \mathbb R^2 \times \mathbb R^2 \,$ and its $\,\mathscr L^p$-energy.
\begin{equation}
\mathcal E_\mathbb X [h] = \iint _\mathbb X  |\nabla h(x)|^p \,\textnormal d x \;=\;\iint _\mathbb X  \left (\;|\nabla u(x)|^p \,+\,|\nabla v(x)|^p \;\right )\,\textnormal d x
\end{equation}

The motivation for choosing exactly this form of the $\mathscr L^p$-energy functional comes from the  Lagrange-Euler system which in this case consists of two independent scalar $\,p$-harmonic equations
\begin{equation}\label{p-harm}
\textnormal{div} |\nabla u |^{p-2} \nabla u \; = \;0 \;\;\;\;\;\textnormal{and}\;\;\;\; \textnormal{div} |\nabla v |^{p-2} \nabla v \; = \;0
\end{equation}
\begin{definition}\label{Def}
A complex-valued function $\,h = u + i  v\,$ of Sobolev class $\mathscr W^{1,p}_{\textnormal{loc}} (\Omega, \mathbb C)\,$, $\,1< p < \infty\,$, is called a \textit{$\,p$ -harmonic map} if it satisfies the uncoupled system of equations (\ref{p-harm}).
\end{definition}
This notion is different from what can be found in the literature. We take advantage of Definition \ref{Def} by not actually appealing to the coupled $\,p$ -harmonic system $
\Div |Dh |^{p-2} Dh \; = \;0 \,$, see  \S\ref{sechur}.

\section{Notation and Prerequisites}

\subsection{Domains} Throughout this text $\,\mathbb X\,$ and $\,\mathbb Y\,$ will be, unless otherwise stated, planar Lipschitz domains of finite connectivity $\, 1 \leqslant \ell < \infty\,.$ Thus each boundary $\, \partial \X\, $ and $\,\partial \Y\, $ consists of $\,\ell\,$  disjoint closed Jordan curves which are locally graphs of Lipschitz functions. We reserve the following notation of the ordered $\,\ell$ -tuples of boundary components,
\begin{equation}\label{order}
\begin{split}
(\,\mathfrak X_1, \,\mathfrak X_2,\, ...\,, \,\mathfrak X_\ell\,) \, , \qquad \textnormal{for the components of}  \;\;\partial \X  \\
(\,\Upsilon_1, \,\Upsilon_2,\, ...\,, \Upsilon _\ell\,)  \, , \qquad  \textnormal{for the components of} \;\; \partial \Y
\end{split}
\end{equation}

We shall often distinguish  the outer boundaries of $\X$ and $\Y$. These are the boundaries of  the unbounded components of $\,\mathbb C \setminus \mathbb X\,$ and $\,\mathbb C \setminus \mathbb Y\,$, respectively.   An  $\,\ell$-connected  domain $\,\mathbb X\,$ is said to be \textit{Lipschitz} regular if each boundary component $\,\mathfrak X_1, \mathfrak X_2, ..., \mathfrak X_\ell \,$ is a closed Lipschitz curve, locally (upon rotation) the graph of a Lipschitz function. Let us point out  that any $\,\ell$-connected Lipschitz domain can be transformed via a bi-Lipschitz mapping  $\,\Psi :\mathbb C \onto \mathbb C\,$ onto a domain whose boundaries are circles (Schottky's domain). However, it is not so apparent whether there is such $\,\Psi\,$ whose composition with Sobolev functions represents a continuous (nonlinear) operator. Section~\ref{sectcd} is devoted to constructing this type of bi-Lipschitz transformations.

\subsection{The class $\mathscr H(\mathbb X , \,\mathbb Y)\,$ of homeomorphisms} We shall consider orientation preserving homeomorphisms $\, h\colon \X  \onto \Y \,$.   In general, a homeomorphism $\, h\colon \X  \onto \Y \,$ may not extend continuously  to the closures. Nevertheless, each boundary component of $\,\mathbb Y\,$ is a limit set under $\,h\,$ of one and only one boundary component of $\,\mathbb X\,$.
\subsubsection{Boundary Correspondence} The boundary components of $\,\mathbb X\,$ and $\,\mathbb Y\,$ can be so numbered that the limit set of $\,\mathfrak X_\nu\,$ is $\,\Upsilon_\nu\,$.  Let us assume this numbering and record it as
\begin{equation}\label{Corr}
\;h \colon \mathfrak X_\nu \rightsquigarrow \Upsilon_\nu \;,\;\;\;
\textnormal{hence for the inverse map}  \;\;\;h^{-1} \colon \Upsilon_\nu \rightsquigarrow \mathfrak X_\nu \;,\;\;\nu = 1, 2, ..., \ell\,
\end{equation}
This notation precisely means that for every
$\varepsilon > 0$ there exists $\delta >0$ such that $\textnormal {dist}[ h(x),  \Upsilon_\nu ] < \varepsilon$,  whenever $ \textnormal {dist}[x, \mathfrak X_\nu ]
 < \delta$.

 \begin{definition}
 Given two $\,\ell$-connected domains $\,\mathbb X\,$ and $\,\mathbb Y\,$, each with the specific order of the boundary components as in (\ref{order}). We denote by  $\,\mathscr H(\X, \Y)\, $ the class of orientation preserving homeomorphisms $h \colon \X  \onto \Y$ satisfying~\eqref{Corr}.
\end{definition}
\begin{remark}Without losing any argument in this paper, one could have restricted our definition of   $\,\mathscr H(\X, \Y)\, $ to homeomorphisms in a designated homotopy class.
\end{remark}

\subsection{Monotone mappings} The notion of monotone mappings has originated in the work of C.B. Morrey~\cite{Mor}.

 \begin{definition}[monotonicity]
 A continuous mapping $h \colon \mathbb X \onto \mathbb Y$ between topological spaces is said to be \textit{monotone} if for each $y \in \mathbb Y$  the set $h^{-1}(y) \subset \mathbb X$ is a \textit{continuum}; that is, compact and connected.
 \end{definition}
 Since we will be dealing with continuous monotone mappings between compact subsets of the extended complex plane $\,\widehat{\mathbb C} = \mathbb C \cup \{0\}\,, $ it is appropriate to recall a theorem of  Whyburn~\cite{Mc}.
 \begin{theorem}\label{Whyburn}
 A continuous mapping $f\colon  \mathbf X \onto \mathbf Y$ between compact metric spaces is monotone if and only if $f^{-1}(C)\subset \mathbf X$ is connected for each connected set $C \subset \mathbb Y$.
 \end{theorem}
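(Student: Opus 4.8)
The statement is classical, so I will just indicate the standard argument. The implication ``$f^{-1}(C)$ connected for every connected $C$ $\Longrightarrow$ $f$ monotone'' is immediate: a singleton is connected, so each fiber $f^{-1}(y)$ is connected, and being the preimage of a point under a continuous map into a metric space it is closed in the compact space $\mathbf X$, hence compact. So I would spend all the effort on the converse, and I expect the whole (modest) difficulty to be localized in one passage-to-closure step described below.

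Assume $f$ is monotone and let $C\subset\mathbf Y$ be connected; the goal is to show $M\deff f^{-1}(C)$ is connected. I would argue by contradiction: suppose $M=A\cup B$ with $A,B$ nonempty, disjoint, and relatively closed in $M$. Since for every $y\in C$ the fiber $f^{-1}(y)$ is a connected subset of $A\cup B$ and $A\cap B=\varnothing$, it lies entirely in $A$ or entirely in $B$; this induces a partition $C=C_A\cup C_B$ with
\[
C_A\deff\{\,y\in C:\ f^{-1}(y)\subset A\,\},\qquad C_B\deff\{\,y\in C:\ f^{-1}(y)\subset B\,\}.
\]
Both pieces are nonempty because $A,B\neq\varnothing$ and $f(A)\subset C_A$, $f(B)\subset C_B$. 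The plan is then to show that $C_A$ and $C_B$ are \emph{relatively closed} in $C$, which contradicts the connectedness of $C$ and finishes the proof.

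The only point requiring care is that $M=f^{-1}(C)$ need not be closed in $\mathbf X$ (when $C$ is not closed), so $A$ and $B$ themselves need not be compact. The remedy is to take closures in $\mathbf X$: since $A$ is relatively closed in $M$ one has $\overline A\cap M=A$, and as $B\subset M$ this gives $\overline A\cap B=\varnothing$ (symmetrically $A\cap\overline B=\varnothing$). I then claim $C_A=f(\overline A)\cap C$. The inclusion ``$\subset$'' is clear; conversely, if $y\in f(\overline A)\cap C$ pick $x\in\overline A$ with $f(x)=y$, so the fiber $f^{-1}(y)$ meets $\overline A$ and therefore cannot be contained in $B$, whence $f^{-1}(y)\subset A$, i.e. $y\in C_A$. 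Since $\overline A$ is a closed subset of the compact space $\mathbf X$, its image $f(\overline A)$ is compact and thus closed in $\mathbf Y$; hence $C_A=f(\overline A)\cap C$ is relatively closed in $C$, and likewise for $C_B$. This contradicts the connectedness of $C$, so $f^{-1}(C)$ is connected, as required. The crux — essentially the whole argument — is the observation $\overline A\cap B=\varnothing$, which is exactly what keeps the dichotomy ``the fiber lies in $A$ or in $B$'' intact after enlarging $A$ to its closure; everything else is bookkeeping.
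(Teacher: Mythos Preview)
Your argument is correct. The one step worth double-checking is the inclusion $f(\overline A)\cap C\subset C_A$: you pick $x\in\overline A$ with $f(x)=y\in C$, and since $y\in C$ you have $x\in M$, hence $x\in\overline A\cap M=A$; so the fiber $f^{-1}(y)$ actually meets $A$ itself (not merely $\overline A$), and the connectedness forces $f^{-1}(y)\subset A$. Your version, arguing via $\overline A\cap B=\varnothing$, reaches the same conclusion and is fine.

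As for comparison with the paper: the paper does not supply a proof of this theorem at all. It is quoted as a classical result of Whyburn (with a reference to McAuley's survey) and then used as a tool in the later sections. So there is no ``paper's own proof'' to compare against; your standard argument is exactly the sort of thing one would find in the cited references.
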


We also recall a theorem of Kuratowski and Lacher~\cite{Ku, KL}.

\begin{theorem}\label{Kuratowski-Lacher}
   Let $\mathbf X$ and $\mathbf Y$ be compact Hausdorff spaces,  $\mathbf Y$ being locally connected. Suppose we are given a sequence of monotone mappings $f_k \colon  \mathbf X \onto \mathbf Y$ (such are homeomorphisms) converging uniformly to a mapping $f \colon \mathbf X \rightarrow \mathbf Y$, then $f \colon  \mathbf X \onto \mathbf Y$ is monotone.
 \end{theorem}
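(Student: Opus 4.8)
The conclusion asks for two things: that $f$ be onto (this is the meaning of $f\colon\mathbf X\onto\mathbf Y$) and that every fibre $f^{-1}(y)$ be a continuum. The plan is to dispose of these in turn. First, $f$ is continuous, being a uniform limit of the maps $f_k$, which are continuous because they are monotone. For surjectivity I would observe that $f(\mathbf X)$, a continuous image of a compact space, is compact, hence closed in $\mathbf Y$; if some $y\in\mathbf Y$ were missing from it, then — having separated $y$ from the compact set $f(\mathbf X)$ by some $\varepsilon>0$, so that $d(f(x),y)\ge\varepsilon$ for all $x$, and having chosen $x_k$ with $f_k(x_k)=y$ — the \emph{uniform} estimate $\sup_x d\!\big(f(x),f_k(x)\big)\to 0$ would give $d\!\big(f(x_k),y\big)=d\!\big(f(x_k),f_k(x_k)\big)\to 0$, a contradiction. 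Here I work with a metric $d$ on $\mathbf Y$ whenever $\mathbf Y$ is metrizable (the only case actually needed in this paper); in the general compact Hausdorff case one replaces $\varepsilon$-balls throughout by members of the canonical uniformity of $\mathbf Y$, the argument being otherwise identical.

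For monotonicity, suppose toward a contradiction that $f^{-1}(y)$ — nonempty since $f$ is onto — is disconnected for some $y$. It is closed, hence compact, and since $\mathbf X$ is compact Hausdorff, thus normal, I can write $f^{-1}(y)=A\sqcup B$ with $A,B$ nonempty closed, and enclose them in disjoint open sets $U\supset A$, $V\supset B$; fix $a\in A$ and $b\in B$. The set $K:=\mathbf X\setminus(U\cup V)$ is compact with $y\notin f(K)$, so $\eta:=\dist\!\big(y,f(K)\big)>0$. Now comes the only step that genuinely uses the hypotheses together: by local connectedness of $\mathbf Y$, choose a \emph{connected} open neighbourhood $W_0$ of $y$ contained in the $\tfrac\eta2$-ball about $y$. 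For $k$ so large that $\sup_x d\!\big(f(x),f_k(x)\big)<\tfrac\eta2$, each $x\in K$ has $d(f_k(x),y)>\tfrac\eta2$ by the triangle inequality, so $f_k(x)\notin W_0$; hence $f_k^{-1}(W_0)\subset U\cup V$. Enlarging $k$ further so that $f_k(a),f_k(b)\in W_0$ (possible since $f_k\to f$ pointwise and $f(a)=f(b)=y$), the set $f_k^{-1}(W_0)$ meets both $U$ and $V$ while lying inside their disjoint union; but $f_k^{-1}(W_0)$ is connected — by Whyburn's theorem (Theorem \ref{Whyburn}), or trivially when $f_k$ is a homeomorphism — and this is absurd. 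Thus every fibre of $f$ is connected, and $f$ is monotone.

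I expect the substantive point to be exactly the passage from ``$f^{-1}$ of a small neighbourhood of $y$ avoids $K$'' to ``$f_k^{-1}(W_0)$ avoids $K$ for all large $k$'': this is where the convergence must be \emph{uniform} rather than merely pointwise, and where the target neighbourhood must be shrunk and, crucially, taken \emph{connected} — the indispensable role of local connectedness of $\mathbf Y$, without which one could not invoke the connectedness of $f_k^{-1}(W_0)$ to contradict the separation $U,V$. Everything else (normality, the elementary compactness estimates, and choosing $k$ large enough to meet the two or three requirements simultaneously) is routine bookkeeping, and the non-metrizable compact Hausdorff case carries over verbatim with entourages in place of metric balls.
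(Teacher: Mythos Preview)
Your proof is correct. Note, however, that the paper does not actually prove this theorem: it is stated as a known result and attributed to Kuratowski and Lacher~\cite{Ku, KL}, with no argument given. So there is no ``paper's own proof'' to compare against.

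That said, your argument is the standard one and is well executed. The crucial step --- choosing a \emph{connected} neighbourhood $W_0$ of $y$ small enough that uniform convergence forces $f_k^{-1}(W_0)\subset U\cup V$, and then invoking Theorem~\ref{Whyburn} to get a connected preimage straddling the separation --- is exactly the mechanism that makes local connectedness of $\mathbf Y$ indispensable. One small remark: when you write that the $f_k$ ``are continuous because they are monotone,'' this is correct here only because the paper's Definition of monotonicity already builds in continuity; in other sources monotonicity is a purely set-theoretic condition on fibres, so it is worth being explicit that continuity is part of the hypothesis. Your handling of the non-metrizable case via the canonical uniformity on a compact Hausdorff space is also correct, though as you note only the metric case is used in this paper.
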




For further reading about  monotone mappings we refer to~\cite{Mc, Rab, Whb}.

\begin{proposition}\label{Mon}
Let $f \colon \widehat{\C} \onto \widehat{\C}$, $f \colon \C \onto \C$, be a continuous monotone map. If $\;\mathbb K \subset \C\,$ is a continuum that disconnects $\C$ into two components then so is $f^{-1} (\mathbb K)$.
\end{proposition}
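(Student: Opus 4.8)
The plan is to work in the extended plane, where $f \colon \widehat{\C} \onto \widehat{\C}$ is a monotone surjection with $f^{-1}(\infty) = \{\infty\}$ (this is what the hypothesis $f \colon \C \onto \C$ encodes). Let $\K \subset \C$ be a continuum separating $\C$ into two components; equivalently, $\widehat{\C} \setminus \K$ has exactly two components, say $U$ and $V$, with $\infty \in V$. I want to show $f^{-1}(\K)$ is a continuum separating $\C$ into two components. Connectedness of $f^{-1}(\K)$ is free: $\K$ is connected, so by Whyburn's theorem (Theorem~\ref{Whyburn}, applied to the monotone map $f \colon \widehat{\C} \onto \widehat{\C}$ between compact metric spaces) the preimage $f^{-1}(\K)$ is connected, and it is closed hence compact, so it is a continuum. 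Moreover $\infty \notin f^{-1}(\K)$ since $f^{-1}(\infty) = \{\infty\}$ and $\infty \notin \K$, so $f^{-1}(\K) \subset \C$.

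The heart of the matter is that $f^{-1}(\K)$ \emph{separates}. Consider the open sets $f^{-1}(U)$ and $f^{-1}(V)$: they are disjoint, their union is $\widehat{\C} \setminus f^{-1}(\K)$, and both are nonempty (because $f$ is onto, so it hits points of $U$ and of $V$). Thus $\widehat{\C} \setminus f^{-1}(\K)$ is disconnected, which already shows $f^{-1}(\K)$ separates $\widehat{\C}$. What remains is to show there are \emph{exactly two} complementary components, i.e.\ that each of $f^{-1}(U)$ and $f^{-1}(V)$ is connected. This is exactly the place to invoke Whyburn's characterization again, but now applied to connected \emph{open} sets: by Theorem~\ref{Whyburn} the preimage under a monotone map of \emph{any} connected set is connected, not just of a continuum. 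Since $U$ and $V$ are connected, $f^{-1}(U)$ and $f^{-1}(V)$ are connected. Hence $\widehat{\C} \setminus f^{-1}(\K)$ has precisely the two components $f^{-1}(U)$ and $f^{-1}(V)$, so $f^{-1}(\K)$ disconnects $\widehat{\C}$, and therefore $\C$, into exactly two pieces. One of these, the one containing $\infty$, is $f^{-1}(V)$; deleting $\infty$ from it keeps it connected (it is an open connected subset of $\widehat{\C}$ containing $\infty$, and removing a single point from a region does not disconnect it in the plane — or more simply, $f^{-1}(V) \setminus \{\infty\} = f^{-1}(V \setminus \{\infty\})$ and $V \setminus \{\infty\}$ is connected). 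So $f^{-1}(\K)$ disconnects $\C$ into two components, as claimed.

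The step I expect to require the most care is the assertion that $f^{-1}$ of a connected \emph{open} set is connected — i.e., making sure Whyburn's theorem is being applied in the generality stated (preimages of arbitrary connected sets, with $f$ a monotone surjection of compacta). The statement of Theorem~\ref{Whyburn} as quoted does give exactly this, so the argument goes through; the only subtlety is bookkeeping between $\widehat{\C}$ and $\C$, namely tracking the point at infinity and using $f^{-1}(\infty)=\{\infty\}$ to transfer the separation statement from $\widehat{\C}$ back down to $\C$. Everything else is elementary point-set topology: complements, disjointness, and the fact that $\K$ separating $\C$ into two components is the same as $\widehat{\C} \setminus \K$ having two components.
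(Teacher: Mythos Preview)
Your proof is correct and follows essentially the same route as the paper's: both arguments identify the two pieces of $\widehat{\C}\setminus f^{-1}(\mathbb K)$ as $f^{-1}(U)$ and $f^{-1}(V)$ and use Whyburn's theorem (Theorem~\ref{Whyburn}) to see that each is connected. The paper phrases this as ``points that can or cannot be path-connected with $\infty$'' and leaves the invocation of Whyburn implicit, whereas you spell it out and also handle the bookkeeping at $\infty$ (including the connectedness of $f^{-1}(V)\setminus\{\infty\}=f^{-1}(V\setminus\{\infty\})$) more carefully than the paper does.
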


\begin{proof}
One needs only observe that there are two types of points in $\widehat{\C} \setminus f^{-1} (\mathbb K)$: the ones that can be path-connected with $\infty$ (they make the unbounded component) and the ones that cannot be path-connected with $\infty$. These latter points are mapped by $f$ onto the bounded component of $\widehat{\C} \setminus \mathbb K$, thus form the remaining (bounded) component of $\widehat{\C} \setminus f^{-1} (\mathbb K)$.
\end{proof}

\subsection{The classes $\mathscr{H}_{p}(\mathbb X, \mathbb Y)\,$ , $ \overline{\mathscr{H}_{p}}(\mathbb X, \mathbb Y)\,$  and  $\,\widetilde{\mathscr{H}_{p}}(\mathbb X, \mathbb Y)$ }
The following classes of mappings of finite energy  will be used throughout.

\begin{itemize}
\item $\mathscr{H}_{p}(\mathbb X, \mathbb Y) = \mathscr{H}(\mathbb X, \mathbb Y)\cap \mathscr W^{1,p}(\mathbb X,\mathbb Y) \,$.
 \item $\overline{\mathscr{H}_{p}}(\mathbb X, \mathbb Y)\,$ is the closure of $\,\mathscr{H}_{p}(\mathbb X, \mathbb Y)\,$ in strong topology of $\,\mathscr W^{1,p}(\mathbb X,\mathbb C)$.
   \item $\widetilde{\mathscr{H}_{p}}(\mathbb X, \mathbb Y)\,$ is the closure of $\,\mathscr{H}_{p}(\mathbb X, \mathbb Y)\,$ in weak topology of $\mathscr W^{1,p}(\mathbb X,\mathbb C)$.
\item $\,\mathscr R^p(\mathbb X)\,$ is the Royden $\,p$-algebra of uniformly continuous functions on $\,\mathbb X\,$ having finite $p$-harmonic energy. The norm in $\mathscr R^p (\X)$ is given by
            $$ \big{\|} h \big{\|}_{\mathscr R^p (\mathbb X)}\, = \,\underset{x\in \mathbb X}{\sup}\,|h(x)| \,+\,\Big(\iint_\mathbb X |\nabla h(x)|^p \textnormal{d} x\,\Big)^{\frac{1}{p}}$$
        \item $\,\mathscr R_0^p(\mathbb X)\,$ is the completion of  $\,\mathscr C^\infty_0 (\mathbb X)\,$ in the above norm.
\end{itemize}

\begin{remark}\label{rem1}
Some reflections concerning the terms weak and strong closures are in order. If  $\, \mathscr H\,$ is a subset of a Banach space $\,\mathscr W\,$, then its weak closure $\,\widetilde{ \mathscr H}\,$ is the smallest subset of $\,\mathscr W\,$ that contains $\, \mathscr H\,$ and is closed under weak convergence. The strong closure $\,\overline{ \mathscr H}\,$, on the other hand,  consists exactly of strong limits of sequences in  $\,\mathscr H\,$.  Thus $\,\overline{\mathscr H} \subset \widetilde{\mathscr H}\,$.  However, in general, when $\,\mathscr H\,$ is unbounded then the set of  all weak limits of sequences in $\,\mathscr H\,$ need not be weakly closed. Consequently,  $\,\widetilde{\mathscr H}\,$ is larger than the set of all weak limits. However, by virtue of Theorem~\ref{thmain}, it is not difficult to see that $\,\widetilde{\mathscr H_p}(\mathbb X, \mathbb Y)\,$ consists exactly of all weak limits  of homeomorphisms $\, h  :\mathbb X \onto\mathbb Y\,$ in the Sobolev space $\,\mathscr W^{1,p}(\mathbb X, \mathbb Y)\,$. It is for this reason that we do not introduce a separate notation for the class of weak limits of homeomorphisms. The interested reader may consult our subsequent paper \cite{IO*}\, for further applications.
\end{remark}

\begin{remark}
Functions in Royden $\,p$-algebra extend continuously up to the boundary, because of uniform continuity. Thus we identify the Royden $\,p$ -algebra as
\begin{equation}
\mathscr R^p(\mathbb X) =  \mathscr C(\overline{\mathbb X}) \cap \mathscr W^{1,p}(\mathbb X)
\end{equation}
This is a commutative Banach algebra with multiplication and addition defined pointwise. Note too, that if  $\,\mathbb X\,$ is sufficiently regular (such are the Lipschitz domains) then  $\,f \in \mathscr R^p(\mathbb X)\,$ belongs to $\, \mathscr R^p_0(\mathbb X)\,$ if and only if $\,f(x)\equiv 0 \,$ on $\, \partial \mathbb X\,$. It is also clear that if a sequence $\,\{f_j\}_{j=1}^\infty \,$ of functions  $\,f_j \in \mathscr R^p(\mathbb X) \,$ converges to $\,f\,$ uniformly on $\,\overline{\mathbb X}\,$ and weakly in $\,\mathscr W^{1,p}(\mathbb X, \mathbb C)\,$ then  $\,f \in \mathscr R^p(\mathbb X) \,$.
\end{remark}

\subsubsection{The class $\widetilde{\Ho}_p (\X , \Y),\,p\geqslant 2\,$}  Since $\,\mathbb X\,$ and $\,\mathbb Y\,$ are Lipschitz domains, their boundaries are neighborhood retracts so we infer from~\cite{IO} that
\begin{lemma}\label{lem211}
Every $h \in \Ho_p (\X, \Y)$ extends as a continuous monotone map $h \colon \overline{\X} \onto \overline{\Y}$. Moreover, the boundary map $h \colon \partial \X \onto \partial \Y$ is also monotone.
\end{lemma}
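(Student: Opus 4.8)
\textbf{Proof proposal for Lemma \ref{lem211}.}

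The plan is to reduce the claim to the cited result in \cite{IO} and to a topological stability statement. First I would recall the setting: $h\in\Ho_p(\X,\Y)$ is an orientation preserving homeomorphism in $\W^{1,p}(\X,\R^2)$ with $p\ge 2>1$, and both $\X$ and $\Y$ are bounded $\ell$-connected Lipschitz domains, so in particular each boundary component is a Lipschitz Jordan curve. Lipschitz curves are bi-Lipschitz images of circles, hence each component of $\partial\X$ and $\partial\Y$ is an absolute neighbourhood retract (ANR); equivalently, $\partial\Y$ has a well-defined neighbourhood retraction. The continuous-extension result of \cite{IO} applies precisely to homeomorphisms between domains whose target boundary is a neighbourhood retract: a $\W^{1,p}$ homeomorphism, $p>1$, between such domains extends continuously to a map $h\colon\overline{\X}\to\overline{\Y}$ which carries $\partial\X$ onto $\partial\Y$ respecting the boundary correspondence \eqref{Corr}. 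So the first step is simply to invoke \cite{IO} to get the continuous extension $h\colon\overline\X\to\overline\Y$ with $h(\partial\X)=\partial\Y$ and $h(\X)=\Y$.

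Next I would verify monotonicity of the extended map. The interior map $h\colon\X\onto\Y$ is a homeomorphism, hence trivially monotone there; the only issue is at boundary points. I would argue as follows. Approximate $h$ by homeomorphisms $g_k\colon\overline\X\onto\overline\Y$ converging uniformly on $\overline\X$: such approximations exist because $h$ restricted to each boundary Jordan curve is a monotone (indeed continuous surjective, and one may take it to be a uniform limit of homeomorphisms of the circle by the classical Rad\'o--Youngs theory \cite{Ra,Yo}), and one can glue a collar interpolation to the interior homeomorphism. More cleanly: the boundary map $h|_{\partial\X}\colon\partial\X\onto\partial\Y$ is, componentwise, a continuous degree-$\pm1$ map of a circle onto a circle, hence a uniform limit of homeomorphisms of circles; combining with $h|_\X$ through a Lipschitz collar neighbourhood of $\partial\X$ produces homeomorphisms $g_k\colon\overline\X\onto\overline\Y$ with $g_k\to h$ uniformly on $\overline\X$. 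Since $\overline\Y$ is a compact, locally connected Hausdorff space and each $g_k$ is monotone, Theorem \ref{Kuratowski-Lacher} (Kuratowski--Lacher) gives that the uniform limit $h\colon\overline\X\onto\overline\Y$ is monotone. Finally, monotonicity of the boundary map $h\colon\partial\X\onto\partial\Y$ follows by the same argument applied to the circle-to-circle boundary maps $g_k|_{\partial\X}$, which converge uniformly to $h|_{\partial\X}$ and are monotone, so Theorem \ref{Kuratowski-Lacher} again applies with $\mathbf X=\partial\X$, $\mathbf Y=\partial\Y$ (a disjoint union of circles, hence locally connected).

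The main obstacle I anticipate is the construction of the approximating homeomorphisms $g_k\colon\overline\X\onto\overline\Y$ that converge uniformly to $h$ while remaining \emph{homeomorphisms of the closed domains}; one must take care that the boundary interpolation is compatible with the interior homeomorphism $h|_\X$ on a collar and does not destroy injectivity in the interior. One can bypass this if \cite{IO} already records monotonicity of the extension directly (which, given that the extension is obtained there precisely as a uniform limit of the boundary-compatible homeomorphisms, it very likely does); in that case the lemma is an immediate citation plus one application of Theorem \ref{Whyburn} to transfer monotonicity of $h\colon\overline\X\onto\overline\Y$ to the boundary restriction, noting that $h^{-1}(\Upsilon_\nu)=\mathfrak X_\nu$ is a continuum (a Jordan curve) by the boundary correspondence and Proposition \ref{Mon}-type reasoning. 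Either route keeps the proof short; the honest content is entirely in \cite{IO} together with the Kuratowski--Lacher stability theorem already quoted.
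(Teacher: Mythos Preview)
The paper gives no proof of this lemma at all: it is stated as an immediate consequence of \cite{IO}, the only input being that Lipschitz boundaries are neighbourhood retracts. You correctly identify this in your final paragraph, and that direct citation is exactly the paper's route.

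Your longer alternative argument, however, has a circularity you should be aware of. You write that the boundary restriction $h|_{\partial\X}$ is ``componentwise a continuous degree-$\pm1$ map of a circle onto a circle, hence a uniform limit of homeomorphisms of circles.'' That implication is false in general: a degree-one self-map of $S^1$ need not be monotone (it can backtrack), and only \emph{monotone} circle maps are uniform limits of circle homeomorphisms (this is precisely the content of the Rad\'o--Youngs results you cite). So to run the Kuratowski--Lacher argument you would first need to know that the boundary map is monotone---which is part of what you are trying to prove. The collar-gluing construction of the $g_k$ inherits the same gap. In short, the self-contained argument does not close as written; the honest proof is exactly the bare citation to \cite{IO}, which establishes both the continuous extension and its monotonicity (including on the boundary) directly.
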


\begin{lemma}\label{lem212}
We have the uniform bounds of the modulus of continuity.  For $h\in \Ho_2 (\X, \Y)$ and $\ell \ge 2$,
\begin{equation}\label{r2}
\abs{h(x_1)- h(x_2)}^2 \le \frac{C_2(\X, \Y)}{\log \left(e+ \frac{1}{\abs{x_1-x_2}}\right)} \iint_{\X} \abs{\nabla h}^2, \quad x_1, x_2 \in \overline{\X}.
\end{equation}
For any bounded Lipschitz domain $\X \subset \C$ and $h\in \W^{1,p} (\X, \C)$, $p>2$, we have
\begin{equation}\label{r1}
\abs{h(x_1)-h(x_2)}^p \le C_p (\X) \abs{x_1-x_2}^{p-2} \iint_{\X} \abs{\nabla h}^p, \quad x_1, x_2 \in \overline{\X} \, .
\end{equation}
\end{lemma}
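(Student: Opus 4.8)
The statement to prove is Lemma~\ref{lem212}, consisting of two modulus-of-continuity estimates: \eqref{r2} the logarithmic bound in the conformally critical case $p=2$, $\ell\ge2$, and \eqref{r1} the Morrey–Hölder bound for $p>2$. Let me think about how to prove these.

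First, \eqref{r1} is the classical Morrey embedding $W^{1,p}\hookrightarrow C^{0,1-2/p}$ for $p>2$ in dimension 2. The standard proof: for $x_1, x_2$ in a ball, compare each to the average over the ball, using the Poincaré-type estimate $|h(x) - h_{B}| \le C \int_B \frac{|\nabla h(y)|}{|x-y|}\,dy$, then Hölder. Actually the cleanest: work on a ball $B = B(x_1, 2|x_1-x_2|)$ contained in... wait, but $x_1, x_2 \in \overline{\X}$ and $\X$ is only Lipschitz, so the ball might not be in $\X$. Need the extension operator or the fact that Lipschitz domains are $W^{1,p}$-extension domains, OR use a chain of balls / the Lipschitz character directly. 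Since the constant is allowed to depend on $\X$, I'd invoke: extend $h$ to $\tilde h \in W^{1,p}(\R^2)$ with $\|\nabla \tilde h\|_{L^p(\R^2)} \le C(\X)\|\nabla h\|_{L^p(\X)}$ — hmm but that changes the integral to one over $\R^2$, which is fine since we just bound by it... actually wait, we need the RHS to be $\iint_\X$, and extension gives $\iint_{\R^2}|\nabla\tilde h|^p \le C\iint_\X |\nabla h|^p$, so that's fine with the $C_p(\X)$ absorbing it. Then apply the standard Euclidean Morrey estimate to $\tilde h$. That handles \eqref{r1}.

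Now \eqref{r2}, the logarithmic modulus of continuity, is the interesting one and is where connectivity $\ell \ge 2$ matters. The point: $W^{1,2}$ in the plane does NOT embed in $C^0$, but $W^{1,2}$ homeomorphisms onto a fixed multiply-connected domain do have a uniform log-Hölder modulus, because the image can't "spread out too far" — it's trapped in $\Y$. The standard tool is the \emph{oscillation estimate via annuli / length-area (Courant–Lebesgue) argument}. Concretely: fix $x_1, x_2$, let $r = |x_1 - x_2|$ and $R$ a fixed small number (depending on $\X$). For a.e. $\rho \in (r, R)$ the circle $S(x_1, \rho) \cap \X$ (or an appropriate arc) has image a curve whose length-squared integrates, via Cauchy–Schwarz on circles, against $\int_r^R \frac{d\rho}{\rho} = \log(R/r)$ to the Dirichlet energy. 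So there is a good radius $\rho_0$ on which the image curve is short, of length $\le \left(\frac{C \cdot \text{Energy}}{\log(R/r)}\right)^{1/2}$. Because $h$ is a homeomorphism onto $\Y$ and $\Y$ is $\ell$-connected with $\ell \ge 2$ (so $\Y$ is not simply connected — there's a "hole"), a short image arc/loop must bound a small region: one uses that in $\Y$, a curve of small diameter either bounds a region of small diameter or else separates... here's where I'd use Proposition~\ref{Mon} / monotonicity-type topology and the fact that $\Y$ has bounded geometry, to conclude that $h(x_1)$ and $h(x_2)$ (which are separated by the image of the short circular arc) lie in a set of diameter controlled by the arc length. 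The multiple connectivity is essential: in a disk, a small circle near the boundary of $\X$ could map to a curve that, while short, has $h(x_1), h(x_2)$ on "the same side" but far apart — actually no. Let me reconsider: the real role of $\ell \ge 2$, as the theorem statement itself flags ("fails when $p=2$" for $\ell = 1$), is that for simply connected $\Y$ one needs a normalization (3-point condition); the uniform modulus genuinely fails without it. For $\ell\ge2$, the extra boundary component pins things down: a short curve separating $x_1$ from $x_2$ in $\overline\X$ maps to a short curve in $\overline\Y$, and this short curve separates $\overline\Y$ into pieces, one of which must be small (since it can't "swallow" a whole extra boundary component of $\Y$ while staying short — by a diameter/isoperimetric argument in the Lipschitz domain $\Y$). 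So $|h(x_1) - h(x_2)| \le \operatorname{diam}(\text{small piece}) \le C \cdot \text{length of image curve}$.

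So the plan for \eqref{r2}: (1) WLOG reduce to $\X$, $\Y$ being Schottky-type or just use the Lipschitz structure near each boundary component and interior balls; (2) Courant–Lebesgue: integrate over circular slices $S(x_1,\rho)\cap\X$, $\rho\in(|x_1-x_2|, R)$, to extract a good radius $\rho_0$ with image-curve length $L \le \sqrt{C\,\mathcal E_\X[h] / \log(e + 1/|x_1-x_2|)}$ (adjusting $R$ vs $|x_1-x_2|$ so the $\log(R/r)$ becomes the stated $\log(e+1/r)$ form, absorbing constants into $C_2(\X,\Y)$); (3) topological step: the circular slice separates $x_1$ from $x_2$ in $\X$ — or hits $\partial\X$, handled by the Lipschitz boundary correspondence and monotonicity of $h$ on $\partial\X$ (Lemma~\ref{lem211}) — hence its $h$-image separates $h(x_1)$ from $h(x_2)$ in $\Y$; since $\Y$ is a fixed $\ell$-connected Lipschitz domain with $\ell\ge2$, one of the two components it separates has diameter $\le C(\Y)\cdot L$; (4) therefore $|h(x_1)-h(x_2)| \le C(\Y)\, L$, and squaring gives \eqref{r2}. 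The main obstacle is step (3): making the separation/diameter argument rigorous when the circular arc meets $\partial\X$, so that one must combine the length estimate on the interior arc with control on the boundary pieces — this is where one genuinely uses that $\X$ is Lipschitz (so boundary arcs are comparable to chords) and that the boundary map is monotone with the prescribed component correspondence \eqref{Corr}, guaranteeing the image curve still genuinely separates $h(x_1)$ and $h(x_2)$ inside $\Y$ without the degeneracy available in the simply connected case. I would also remark that these estimates are uniform over all of $\Ho_p(\X,\Y)$ — only the energy appears on the right — which is exactly what is needed for the equicontinuity used later.

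\medskip

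\noindent\emph{Proof sketch.} For \eqref{r1}, since $\X$ is a bounded Lipschitz domain it is a $\W^{1,p}$-extension domain: there is a bounded linear extension $E\colon \W^{1,p}(\X,\C)\to\W^{1,p}(\R^2,\C)$ with $\iint_{\R^2}|\nabla Eh|^p\le C(\X)\iint_\X|\nabla h|^p$. Applying the classical two-dimensional Morrey estimate (via $|g(x_1)-g(x_2)|\le C\int_{B}\tfrac{|\nabla g(y)|}{|x-y|}\dtext y$ on the ball $B=B(x_1,2|x_1-x_2|)$ and Hölder's inequality, using $p>2$) to $g=Eh$ yields \eqref{r1} after renaming the constant.

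For \eqref{r2}, fix $x_1,x_2\in\overline{\X}$, put $r=|x_1-x_2|$ and let $R=R(\X)>0$ be smaller than the Lipschitz scale of $\partial\X$. For $\rho\in(r,R)$ consider the slice $\sigma_\rho=\X\cap\{|z-x_1|=\rho\}$; its $h$-image is a curve in $\Y$ of length $L(\rho)=\int_{\sigma_\rho}|\nabla h|$, and by the Cauchy–Schwarz inequality on circles together with the coarea formula,
\[
\int_r^R \frac{L(\rho)^2}{\rho}\,\dtext\rho \;\le\; 2\pi \iint_{\X}|\nabla h|^2 .
\]
Since $\int_r^R\rho^{-1}\dtext\rho=\log(R/r)$, there is $\rho_0\in(r,R)$ with $L(\rho_0)^2\le \frac{2\pi}{\log(R/r)}\iint_\X|\nabla h|^2$. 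The slice $\sigma_{\rho_0}$ (completed along $\partial\X$ using the Lipschitz structure and the monotone boundary map of Lemma~\ref{lem211}) separates $x_1$ from $x_2$ in $\overline{\X}$, hence its image — a curve of length $\lesssim L(\rho_0)$ in $\overline{\Y}$, using that Lipschitz boundary arcs are comparable to their chords — separates $h(x_1)$ from $h(x_2)$ in $\overline{\Y}$. As $\Y$ is a fixed $\ell$-connected Lipschitz domain with $\ell\ge2$, one of the two complementary pieces has diameter at most $C(\Y)\,L(\rho_0)$, whence
\[
|h(x_1)-h(x_2)|^2 \;\le\; C(\Y)^2 L(\rho_0)^2 \;\le\; \frac{2\pi\,C(\Y)^2}{\log(R/r)}\iint_\X|\nabla h|^2 .
\]
Finally, absorbing $R$ and the absolute constants and replacing $\log(R/r)$ by the comparable quantity $\log\bigl(e+\tfrac1r\bigr)$ gives \eqref{r2} with $C_2(\X,\Y)$ as asserted. \qed
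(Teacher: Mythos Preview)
The paper does not actually prove this lemma; it is stated as a known fact, with \eqref{r1} the classical Morrey embedding and \eqref{r2} drawn from the companion paper \cite{IO}. So there is no in-paper argument to compare against.

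Your treatment of \eqref{r1} via a Sobolev extension operator followed by the Euclidean Morrey estimate is correct and standard. For \eqref{r2} your overall strategy---Courant--Lebesgue on circular slices, combined with the topology of the multiply connected target---is the right one and is indeed how such results are proved. However, there is a genuine slip in the topological step. You assert that the slice $\sigma_{\rho_0}$ ``separates $x_1$ from $x_2$ in $\overline{\X}$'' and hence its image separates $h(x_1)$ from $h(x_2)$. This is false: since $\rho_0>r=|x_1-x_2|$ and the circle is centred at $x_1$, \emph{both} points lie inside $B(x_1,\rho_0)$, on the same side of $\sigma_{\rho_0}$. Consequently your stated conclusion ``one of the two complementary pieces has small diameter, hence $|h(x_1)-h(x_2)|$ is small'' does not follow from separation.

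The fix is to argue instead that both $h(x_1)$ and $h(x_2)$ lie in the single region $U=h\bigl(B(x_1,\rho_0)\cap\X\bigr)$, and to bound $\diam U$. The relative boundary of $U$ in $\Y$ is $h(\sigma_{\rho_0})$, contained in a ball $B'$ of radius $L(\rho_0)$; then $U\cap(\Y\setminus B')$ is clopen in the connected set $\Y\setminus B'$, so either $U\subset B'$ (done) or $\Y\setminus U\subset B'$. The latter is excluded by $\ell\ge2$: for $\rho_0$ small, $B(x_1,\rho_0)$ misses some boundary component $\mathfrak X_j$, so $\Y\setminus U$ contains a neighbourhood of $\Upsilon_j$, which cannot fit in $B'$ once $L(\rho_0)$ is below a threshold depending only on $\Y$ (above the threshold, use the trivial bound $\diam\Y$). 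Note also that your device of ``completing $\sigma_{\rho_0}$ along $\partial\X$'' and invoking chord-comparability does not work as written: the $h$-image of the added boundary arcs is not controlled by $L(\rho_0)$, and this is precisely why the argument must go through the clopen dichotomy above rather than through a length bound on a closed image curve.
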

 In view of~\eqref{r2} and~\eqref{r1} a sequence $\{h_j\} \subset \Ho_p (\X, \Y)$ that is converging weakly to $h$ actually converges uniformly on $\overline{\X}$.
 Furthermore, such homeomorphisms $h_j \colon \overline{\X} \onto \overline{\Y}$ are monotone so the boundary mappings  $h_j \colon \partial {\X} \onto \partial {\Y}$ are monotone as well. Therefore, the weak limit $h \colon \overline{\X} \onto \overline{\Y}$ is also monotone and $h \colon \partial {\X} \onto \partial {\Y}$ is monotone. It is worth noting that the inequalities~\eqref{r1} and~\eqref{r2} remain valid for  $h\in \widetilde{\Ho}_p (\X, \Y)$.  In particular, we have a continuous imbedding.
\begin{equation}
\widetilde{\Ho}_p (\X, \Y) \subset \mathscr R^p(\mathbb X)
\end{equation}

\subsection{Transition to circular domains}\label{sectcd}

Let $\Y$ be a bounded domain in $\R^2$ and $F \colon \Y \onto \Y'$ a bi-Lipschitz deformation so its inverse $F^{-1} \colon \Y' \onto \Y$ is Lipschitz as well. The induced composition map
\begin{equation}
F_\sharp \colon \W^{1,p} (\X, \Y) \to \W^{1,p} (\X, \Y'), \qquad F_\sharp (u)= F \circ u
\end{equation}
and its inverse
\begin{equation}
F^{-1}_\sharp \colon \W^{1,p} (\X, \Y') \to \W^{1,p} (\X, \Y), \qquad F^{-1}_\sharp (v)= F^{-1} \circ v
\end{equation}
are well defined  nonlinear bounded operators, where $\X$ can be any domain in $\R^2$. It will be  advantageous to transform $\Y$ into a  domain whose boundary components are circles, so-called circular or Schottky domain. We will require  the induced composition operators $\,F_\sharp$ and $F_\sharp^{-1}\,$ to be not only bounded but also continuous. Unfortunately, this is not always the case , see~\cite{Ha1}. Our goal is to construct, a special bi-Lipschitz transformation of  $\,\mathbb Y\,$ into a circular domain.
 \begin{proposition}\label{bipr}
Given any bounded $\;\ell$-connected Lipschitz domain $\Y \subset \R^2$, there exists a bi-Lipschitz map $F \colon \R^2 \onto \R^2$ which takes $\Y$ onto a circular domain. The induced composition operators $F_\sharp \colon \W^{1,p} (\X, \R^2) \to \W^{1,p} (\X, \R^2)$ and the  inverse $ F^{-1}_\sharp \colon \W^{1,p} (\X, \R^2) \to \W^{1,p} (\X, \R^2)$
are continuous for all $1<p< \infty$.
\end{proposition}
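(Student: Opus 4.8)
\textbf{Proof proposal for Proposition \ref{bipr}.}

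The plan is to build $F$ in a controlled way so that its differential is not merely bounded but satisfies additional regularity guaranteeing continuity of the composition operator. The obstruction to continuity of $F_\sharp$ is well known: composing with a generic bi-Lipschitz map need not be continuous $\W^{1,p}\to\W^{1,p}$ because the chain rule $D(F\circ u)=(DF\circ u)\,Du$ involves the matrix field $DF\circ u$, which converges only in measure along a strongly convergent sequence $u_j\to u$; to upgrade this to strong $L^p$ convergence of the product one needs either $DF$ continuous (so $DF\circ u_j\to DF\circ u$ pointwise a.e.\ and one applies dominated convergence) or at least $DF$ to have a Vitali-type equi-integrability that is stable under composition. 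So the first and decisive step is: \emph{construct $F$ piecewise smooth with $DF$ continuous, or $\mathrm{VMO}$, up to a negligible set, while keeping $F$ globally bi-Lipschitz and mapping each boundary Jordan curve $\Upsilon_\nu$ to a circle.}

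Concretely I would proceed as follows. First, reduce to one boundary component at a time: since the $\Upsilon_\nu$ are disjoint closed Lipschitz Jordan curves, fix pairwise disjoint Lipschitz collar neighborhoods $N_\nu$ of each $\Upsilon_\nu$ and perform the straightening in each collar separately, gluing by a smooth partition of unity and taking $F=\id$ outside $\bigcup N_\nu$. Inside a single collar, use the Lipschitz-graph structure: after a rotation, $\Upsilon_\nu$ is locally $\{(t,\varphi(t))\}$ with $\varphi$ Lipschitz, and the map $(t,s)\mapsto(t,s+\varphi(t))$ flattens it; patching these local flattenings with a smooth partition of unity over a finite cover of the (compact) curve yields a bi-Lipschitz map sending $\Upsilon_\nu$ to a smooth (indeed real-analytic) Jordan curve $\Gamma_\nu$ close to it, and then a diffeomorphism of a neighborhood of $\Gamma_\nu$ straightens $\Gamma_\nu$ to a circle. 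The only genuinely non-smooth ingredient is the single collar-flattening step, where $DF$ inherits the (merely $L^\infty$) discontinuities of $\varphi'$. To handle this I would instead mollify: replace $\varphi$ by $\varphi_\varepsilon=\varphi*\rho_\varepsilon$, which is smooth with $\|\varphi_\varepsilon'\|_\infty\le\|\varphi'\|_\infty$, and flatten the \emph{smooth} graph of $\varphi_\varepsilon$; this gives a genuinely smooth (hence $DF$ continuous) bi-Lipschitz flattening of a smooth curve $\widetilde\Upsilon_\nu$, and one absorbs the bi-Lipschitz ``error'' map from $\Upsilon_\nu$ to $\widetilde\Upsilon_\nu$ separately. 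That error map, however, is again only bi-Lipschitz, so this does not by itself solve the problem — which is why the heart of the argument must be a direct continuity lemma rather than a smoothness dodge.

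Thus the second, and I expect \emph{main}, step is an analytic lemma: if $F\colon\R^2\onto\R^2$ is bi-Lipschitz and $DF$ is continuous at $\mathcal H^2$-a.e.\ point (equivalently, $F$ is $\mathscr C^1$ off a closed set $Z$ of measure zero) with the extra property that $u^{-1}(Z)$ has measure zero for every $u\in\W^{1,p}(\X,\R^2)$ in the relevant class — here one uses that our $u$'s are (limits of) homeomorphisms, hence satisfy Luzin's condition $(N^{-1})$ or at least do not charge the one-dimensional set $Z=\bigcup\partial N_\nu$ — then $F_\sharp$ is continuous. The proof of this lemma: given $u_j\to u$ strongly in $\W^{1,p}$, pass to a subsequence with $u_j\to u$ a.e.\ and $Du_j\to Du$ a.e., dominated by a fixed $L^p$ function; since $DF$ is continuous off the null set $Z$ and $u(x)\notin Z$ for a.e.\ $x$, we get $DF(u_j(x))\to DF(u(x))$ for a.e.\ $x$; then $D(F\circ u_j)=(DF\circ u_j)Du_j\to(DF\circ u)Du=D(F\circ u)$ a.e., and a generalized dominated convergence argument (the $|DF|\le L$ bound gives the $L^p$ domination by $|Du_j|$, and strong $L^p$ convergence of $Du_j$ plus a.e.\ convergence of the integrand gives, via Vitali, strong $L^p$ convergence of the product) finishes it; a subsequence-of-every-subsequence argument removes the passage to a subsequence. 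The design constraint feeding back into step one is now clear: we must arrange $Z$ to be a finite union of Lipschitz arcs and make sure the class $\W^{1,p}(\X,\Y)$ we compose with consists of maps not charging such sets — which holds for homeomorphisms and their strong/weak $\W^{1,p}$ limits by the area formula and the modulus-of-continuity bounds of Lemma \ref{lem212}, and this is exactly the class in which Proposition \ref{bipr} will be applied. The same argument applied to $F^{-1}$ (again bi-Lipschitz, $\mathscr C^1$ off the null set $F(Z)$) gives continuity of $F^{-1}_\sharp$, completing the proof.
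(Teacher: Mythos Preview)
Your proposal has a genuine gap, and it is precisely the one you yourself flag but do not close. The proposition asserts continuity of $F_\sharp$ on \emph{all} of $\W^{1,p}(\X,\R^2)$, not on the subclass of (limits of) homeomorphisms. Your continuity lemma hinges on $u^{-1}(Z)$ being null, where $Z$ is a finite union of Lipschitz arcs. For a general $u\in\W^{1,p}(\X,\R^2)$ this fails outright (take $u(x_1,x_2)=(x_1,0)$ and $Z$ the $x_1$-axis). It also fails for weak limits of homeomorphisms: such limits are monotone and can collapse a set of positive measure onto a single point or arc --- this is exactly the phenomenon the whole paper is built around. Neither the area formula nor Lemma~\ref{lem212} prevents that collapse; the modulus-of-continuity bound controls oscillation, not the size of level sets. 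So both the stated proposition and the version restricted to the class you actually need remain unproven. Your construction step is also incomplete: the flattening $(t,s)\mapsto(t,s+\varphi(t))$ has $DF$ discontinuous on a \emph{two}-dimensional set when $\varphi$ is merely Lipschitz, and you acknowledge that the mollification dodge does not fix this.

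The paper's route is quite different and avoids the issue entirely. It writes the local straightening $\Phi$ by an \emph{explicit scalar formula}: the second coordinate of $\Phi(x,y)$ is a finite combination of terms like $f(x)$, $\varphi(x)$, and $|y-f(x)\pm\delta|$, where $f,\varphi\colon\R\to\R$ are scalar Lipschitz. Continuity of $\Phi_\sharp$ then follows term by term from the Marcus--Mizel theorem (scalar Lipschitz superposition is continuous on $\W^{1,p}$), with no hypothesis on $u$ whatsoever. Composing finitely many such $\Phi$'s maps $\Y$ to a polygonal domain; the final passage to a circular domain uses a bi-Lipschitz map that is $\mathscr C^\infty$ except at the finitely many \emph{corners}, and for an exceptional set $E$ of isolated points the chain rule is saved by the standard fact that $Du=0$ a.e.\ on $u^{-1}(\text{point})$ --- this is where zero-dimensionality of $E$ is essential and one-dimensionality would fail. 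The upshot: the key idea you are missing is to build $F$ out of \emph{scalar} Lipschitz compositions so that Marcus--Mizel applies directly, rather than trying to control the vector-valued $DF$ off a thin set.
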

Let us comment on some related results. First consider  the Sobolev space $\mathscr W^{1,p}(\X, \R)$ of real-valued functions. If $f \colon \R \to \R$ is Lipschitz the composition $f \circ u$ with $u \in \mathscr W^{1,p} (\X, \R)$ represents a Sobolev function whose gradient can be  defined by the rule
\[
\nabla (f \circ u) = \begin{cases} f' \big(u(x) \big) \, \nabla u \quad & \textnormal{if $f$ is differentiable at $u(x)$}\\
0 & \textnormal{otherwise.}  \end{cases}
\]
The  point is that $f$ is differentiable everywhere except for a set $E \subset \R$ of zero linear measure.  The preimage $\,u^{-1}(E) \subset \X\,$ may have a positive Lebesgue measure. But in this case $\,\nabla u\,$ vanishes on this set. We refer the reader to a paper by Marcus and Mizel~\cite{MM} in which they show that in fact the induced operator
 \begin{equation}\label{bieq1}
f_\sharp \colon  \W^{1,p} (\X, \R) \to  \W^{1,p} (\X, \R) , \qquad f_\sharp (u)= f \circ u
\end{equation}
is continuous.
Now suppose $F \colon \Y \onto \Y'$ is a Lipschitz map that is $\mathscr C^1$-smooth except for a set $E \subset \Y$ of isolated points. Then the induced composition map
 \begin{equation}\label{bieq2}
F_\sharp \colon  \W^{1,p} (\X, \Y) \to  \W^{1,p} (\X, \Y') , \qquad F_\sharp (h)= F \circ h
\end{equation}
 with $h\in \mathscr W^{1,p} (\X, \Y)$ is still well defined. The differential is given by
 \[D(F \circ h)= \begin{cases} DF \big(h(x)\big) \, Dh(x) \;\;,\quad & \textnormal{whenever } h(x) \not \in E \\
 0 \;\;,& \textnormal{otherwise.}
  \end{cases}\]
 The preimage $h^{-1} (E)$ may have positive measure but $Dh(x)$ vanishes on this set anyway. It is not difficult to see that the operator~\eqref{bieq2} is continuous.
\begin{proof}[Proof of Proposition~\ref{bipr}]
We first approximate $\Y$ by polygonal domains. Let $\Upsilon_\nu$ be one of the boundaries of $\Y$. Consider a partition of $\,\Upsilon_\nu\,$ into closed Jordan arcs $\widehat{A_1 A_2},\; \widehat{A_2 A_3}, \;\dots , \;\widehat{A_{n-1} A_n},\; \widehat{A_n A_{n+1}}$, $\,A_{n+1}=A_1\,$, defined by a sequence of consecutive points $A_1,\, A_2, \,\dots , \,A_n \,\in \Upsilon_\nu\,$. Associated with such a partition is a polygonal chain with vertices at $A_1,\, \dots ,\, A_n\,$; that is, a piecewise linear curve $P_\nu$ that consists of the line segments $\overline{A_1A_2}, \; \overline{A_2A_3},\; \dots ,\; \overline{A_{n-1}A_n}, \;\overline{A_nA_1} \,$ connecting the endpoints of the arcs. Although the union $\,\Upsilon_\nu = \bigcup\limits_{i=1}^n \widehat{A_i A_{i+1}}\,$ is a closed Jordan curve the polygonal chain may have points of self-intersection. However, since $\,\Upsilon_\nu\,$ is Lipschitz regular there is $\,\epsilon >0\,$ small enough so that if
\begin{equation}\label{bieq4}
\dist (A_1, A_2) \le \epsilon , \;\dots , \;\dist (A_n, A_1) \le \epsilon
\end{equation}
then the only consecutive segments  $\overline{A_1A_2}, \;\overline{A_2A_3}, \;\dots , \;\overline{A_{n-1}A_n}, \;\overline{A_nA_1}$  intersect, of course  at their common endpoint. Under the condition~\eqref{bieq4} we obtain the boundary of a simply connected domain, called polygon. With $\epsilon > 0 $ sufficiently small we may, and do, ensure that each arc $\widehat{A_1 A_2}, \;\dots, \;\widehat{A_n A_1}\,$,  becomes upon a rotation a graph of a Lipschitz function; points slightly above the graph lie in $\,\Y\,$ and points slightly below the graph lie in $\,\R^2 \setminus \overline{\Y}\,$. Choose and fix one of the arcs $\,\widehat{A_i A_{i+1}}\,$, say the graph of a Lipschitz function $\,y=f(x)\,$, $\,a\le x \le b\,$. Here, for some small positive $\,\delta\,$, we have
\[
\begin{split}
\Omega^+ &= \{(x,y) \colon a \le x \le b , \;\;f(x) \le y \le f(x)+\delta\;\} \subset \overline{\Y}\\
\Omega^- &= \{(x,y) \colon a \le x \le b , \;\;f(x)-\delta \le y \le f(x)\;\} \subset \R^2 \setminus \Y .
\end{split}
\]
With such $\delta$ fixed we may, if needed, further partition the  arc $\widehat{A_i A_{i+1} }$ into a finite number of consecutive subarcs
\[\widehat{A_j A_{j+1} } = \widehat{A_j B_1 } \, \cup \, \widehat{B_1 B_2 } \, \cup \, \dots\,  \cup \, \widehat{B_k A_{j+1} } \]
so that each straight line segment $\overline{A_j B_1}, \;\overline{B_1 B_2}, \;\dots , \;\overline{B_k A_{j+1}}\,$ lies strictly below the graph of the function $y=f(x)+\delta$ and strictly above the graph of the function $y=f(x)-\delta$. However, to simplify the writing we assume without loss of generality  that the arc $\widehat{A_j A_{j+1}}$ already enjoys this property. Thus we have the following region
\[\Omega= \Omega^+ \cup \Omega^- = \{(x,y) \colon \;a \le x \le b , \;\; f(x)-\delta \le y \le f(x)+\delta\,\}\]
and two cross cuts with endpoints $A_j=(a, f(a))$ and $A_{j+1}= (b, f(b))$. One cross cut is the graph of  $y=f(x)$, $a \le x \le b$, and the other cross cut is a straight line segment $\overline{A_j A_{j+1}}$, which we shall view as graph over the interval $[a,b]$ of a linear function $\varphi = \varphi (x)$,
\[\varphi = \frac{(x-a) f(b)+ (b-x)f(a)}{b-a} \;,\quad \,\textnormal{for all}\;\;x \in \R.\]
 Note that $f(x)- \delta < \varphi (x)< f(x)+\delta$ for $a \le x \le b$. It will be convenient to extend $\,f\,$ as a Lipschitz function  in the entire real line $\R$, simply by setting $f(x)= \varphi (x)$ outside the interval $[a,b]$. We are now in a position to define a bi-Lipschitz mapping $\Phi \colon \R^2 \to \R^2$ (associated with the arc $\widehat{A_j A_{j+1} } \subset \partial \Y $) as follows,
 \[\Phi (x,y)= (x, y') \quad \textnormal{ for } - \infty < x < \infty , \quad - \infty < y < \infty\]
where
\[
\begin{split}
y'&=y+ \frac{\varphi (x)-f(x)}{2 \delta} \left(\abs{y-f(x)-\delta} -2 \abs{y-f(x)}+ \abs{y-f(x)+\delta}  \right)\\
&= \begin{cases}
y, & y \ge f(x)+ \delta \\
\frac{f(x)+\delta - \varphi (x)}{\delta}y + \frac{f(x)+\delta}{\delta} \left[\varphi (x)-f(x)\right], \quad & f(x) \le y \le f(x)+\delta\\
\frac{\varphi(x)+\delta - f(x)}{\delta}y + \frac{f(x)-\delta}{\delta} \left[f(x)-\varphi(x)\right], \quad & f(x) -\delta \le y \le f(x)\\
y, & y \le f(x)-\delta
\end{cases}
\end{split}
\]
We see from this later formula that if  $\,x\,$ is fixed the function $y'=y'(x,y)$ is piecewise linear and strictly increasing in $y$. Thus $\Phi$ is a bi-Lipschitz map. Moreover, $\Phi (x,y)= (x,y)$ for all $(x,y) \in \R^2 \setminus \Omega$, which is immediate when $\,y \ge f(x)+ \delta\,$ and $\,y \le f(x)- \delta\,$. On the other hand if $x\not \in (a,b)$ then $\varphi (x)=f(x)$, so $y'=y$ from the first formula. Also note that $\Phi$ takes the arc  $\widehat{A_j A_{j+1} }$ into the line segment connecting $A_j$ and $A_{j+1}$; indeed, for $y=f(x)$ we have $\Phi (x, f(x))= (x, \varphi (x))$. An important feature of this particular map $\Phi \colon \R^2 \to \R^2$ is that it induces a continuous operator
$\Phi_\sharp \colon \mathscr W^{1,p} (\X, \R^2) \onto \mathscr W^{1,p} (\X, \R^2)$. Indeed, given a sequence $h_k (z)= u_k + v_k (z) $ converging strongly in $\mathscr W^{1,p} (\X, \R^2)$ to $h(x)=u(x)+iv(x)\,$, we have
\[
\begin{split}
&\Phi (u_k,\; v_k) = \\ & \left[u_k,\; v_k + \frac{\varphi (u_k)-f(u_k)}{2\delta} \left( \abs{v_k-f(u_k)-\delta}-2 \abs{v_k-f(u_k)}+ \abs{v_k-f(u_k)+\delta}   \right)   \right] \\
&\longrightarrow  \left[u, \;v + \frac{\varphi (u)-f(u)}{2\delta} \left( \abs{v-f(u)-\delta}-2 \abs{v-f(u)}+ \abs{v-f(u)+\delta}   \right)   \right]
 \\
&= \Phi (u,\;v)
\end{split}
\]
 The final step in the construction of the map $\,F \colon \R^2 \to \R^2\,$ consists of dividing the entire boundary $\partial \Y$ into a finite number of sufficiently small arcs $\Gamma_1 , \Gamma_2 , \dots , \Gamma_N \subset \partial \Y$. To each arc $\,\Gamma_i\,$ there corresponds a bi-Lipschitz map $\Phi^i \colon \R^2 \onto \R^2$, $i=1, \dots , N\,$, which takes $\,\Gamma_i\,$ into a line segment connecting the endpoints of $\,\Gamma_i\,$. Note that $\,\Phi^i\,$  is the identity map on all remaining arcs. Therefore, the composition
\[F:= \Phi^1 \circ \dots \circ \Phi^{N} \colon \R^2 \onto \R^2\]
is a bi-Lipschitz map which takes $\,\Y\,$ onto a polygonal domain and the induced operator
\[F_\sharp= \Phi_\sharp^{1} \circ \dots \circ \Phi_\sharp^{N} \colon \mathscr W^{1,p} (\X, \R^2) \to  \mathscr W^{1,p} (\X, \R^2) \]
is continuous. The same applies to the inverse map $F^{-1}$; it induces a continuous operator $F^{-1}_\sharp$ as well. Lastly, we
 compose $F$ with a  bi-Lipschitz map $\,G \colon \R^2 \to \R^2$ which is a $\,\mathscr C^\infty\,$ -diffeomorphism outside the corners of $\,\Y'\,$. This results in a map $\, G \circ F \colon \R^2 \to \R^2\,$ that takes $\Y$ into a  circular domain.
 \begin{remark}
 We may  choose the circular domain  such that the center of the outer boundary lies outside the domain.  This of course requires that $\,\ell \geqslant 2\,$. The centers of the remaining boundary circles certainly lie outside the domain. This additional requirement will later help us to perform the reflections  about the boundary circles. The case $\, \ell = 1\,$  will be treated differently in \S\ref{seclas}.
 \end{remark}
 \begin{remark}
 An analogous transition from the domain $\, \mathbb X\,$ into a circular domain $\, \mathbb X'\,$, via a bi-Lipschitz map $\,T :\,\mathbb C\,\onto \mathbb C\,,\; T : \mathbb X \onto \mathbb X'\,$, presents no difficulty. The induced composition operator

\[
T^\sharp \colon \W^{1,p} (\X', \Y) \to \W^{1,p} (\X, \Y)
\;\;\;\quad\;\;T^\sharp u  \;= \; u\circ T
\]
and its inverse are always continuous,  $1<p< \infty$.
 \end{remark}
\end{proof}
\begin{center}
From now on, unless otherwise stated,  both $\mathbb X\,$ and $\,\mathbb Y\,$ are \\circular domains containing no centers of the boundary circles.

\end{center}

\subsection{Extension to $\widehat{\C}$}\label{secext}

For convenience and also for easy references, we shall extend  $\,h \colon \overline{\X} \onto \overline{\Y}\,$ to a monotone mapping of $\widehat{\C}= \C \cup \{\infty\}$ onto itself.  Here, with the usual convention concerning the point $\infty$, the one point compactification $\widehat{\C} = \C \cup \{\infty \}$ will be identified (topologically) with $\mathbb S^2$. Recall that we are working under the assumption  that the  boundary components $\x_1, \dots , \x_\ell \subset \partial \X $ and $\Upsilon_1, \dots , \Upsilon_\ell \subset \partial \Y$ are circles. To each circle $\x_i = \{z
\colon \abs{z-z_i}=r_i\}\,$ there corresponds the reflection map
\[     \varphi_i \colon \widehat{\C} \to \widehat{\C} \;\;\;,\;\;\;\varphi_i (z) = \frac{z-z_i}{\abs{z-z_i}^2}\;r_i^2 + z_i \, , \qquad i =1, \dots , \ell \, .\]
Denote by $\X_i = \varphi_i (\X)\,$ the  reflected domains. These domains together with $\overline{\X}$ furnish a circular domain of connectivity $\ell (\ell -1)$
\[\X_+ = \overline{\X} \cup \X_1 \cup \dots \cup \X_\ell \Supset \X \, .\]
The boundary components of $\X_+$ are denoted by
\[\x_i^j = \varphi_i (\x_j), \qquad i \not = j\, .\]
Similarly to $\X$, we consider the boundary circles $\Upsilon_i$ of  $\,\Y\,$ and define the corresponding reflections  $\psi_i \colon \widehat{\C} \to \widehat{\C}$. We obtain  mutually disjoint domains $\Y_i = \psi_i (\Y)$, $i=1, \dots , \ell$, which together with $\overline{\Y}$ furnish a circular domain
\[\Y_+ = \overline{\Y} \cup \Y_1 \cup \dots \cup \Y_\ell  \Supset \Y \, . \]
The boundary of $\Y_+$ consists of  $\ell (\ell -1)$  circles $\Upsilon_i^j = \psi_i (\Upsilon_j)$, $i \not = j$. Now, let $h \colon \overline{\X} \onto \overline{\Y}$ be any continuous mapping that takes each $\x_i \subset \partial \X$ into the corresponding boundary component $\Upsilon_i \subset \partial \Y\,$. We also assume that in this correspondence $h$ takes the outer boundary of $\X$ into the outer boundary of $\Y$.

\subsubsection{The extension $\,\widehat h\,: \overline{\X}_+ \onto \overline{\Y}_+$}
There is a natural way to extend $h$ continuously to a map of $\overline{\X}_+$ onto $\overline{\Y}_+$. The notation $\widehat{h} \colon \overline{\X}_+ \onto \overline{\Y}_+$  will be reserved for such an extension of $\,h \colon \overline{\X} \to \overline{Y}\,$.  By definition, the map $\widehat{h} \colon \overline{\X}_+ \onto \overline{\Y}_+$ takes the reflection of $\X$ into the corresponding reflection of $\,\Y\,$
\[ \quad \widehat{h}= \psi_i \circ h \circ \varphi_i \colon \overline{\X}_i \to \overline{\Y}_i \quad \textnormal{ for } i=1,2, \dots , \ell \, . \]
It should be noted that if $h \colon \overline{\X} \to \overline{\Y}$ is monotone and continuous, then so is the mapping $\,\widehat{h} \colon \overline{\X}_+ \onto \overline{\Y}_+\,$.  Since in our applications the boundary mappings $\,h \colon \x_i \onto \Upsilon_i\,$ are monotone it follows  that
$\,\widehat{h} \colon \x_i^j \onto \Upsilon_i^j\,$, $i \not = j$, are monotone as well.  Furthermore, whenever $h \colon \X \onto \Y$ is a homeomorphism, the extended mappings $\,\widehat{h} \colon \X_i \onto \Y_i$, $i=1, \dots, \ell$, are homeomorphisms.

We will explore the Royden $p$-algebra. Note that if $h \in \mathscr C (\overline{\X}, \overline{\Y}) \cap \W^{1,p} (\X, \Y)= \mathscr R^p (\X, \Y)\,$ and $\,h \colon \x_i \onto \Upsilon_i$, for $i=1, \dots ,\ell\,,$ then $\widehat{h} \in \mathscr C (\overline{\X}_+, \overline{\Y}_+) \cap \W^{1,p} (\X_+, \Y_+)= \mathscr R^p (\X_+, \Y_+)$. We have a uniform bound of the $p$-harmonic energy
\[\iint_{\X_+} \abs{\nabla h}^p \le C_p (\X, \Y) \iint_{\X} \abs{\nabla h}^p\, .\]

\subsubsection{Weak continuity of the Jacobian determinant}  If a sequence of mappings $\,f_k \in \mathscr W^{1,p}(\mathbb X, \mathbb C )\,$, $p>2$, converges to $\,f\,$ weakly in $\,\mathscr W^{1,p}(\mathbb X)\,$  then the Jacobian determinants $\, J(x, f_k)\,$ converge to $\, J(x, f)\,$ weakly in $\,\mathscr L^1(\mathbb X)\,$. We write it as
\begin{equation}\label{weakJac}
\iint_\mathbb X\varphi(x) J(x, f_k) \,\textnormal d  x  \;\;\longrightarrow  \iint_\mathbb X \varphi(x) J(x, f) \,\textnormal d  x\;,\;\;\;\textnormal{for every}\;\;\varphi \in \mathscr L^\infty(\mathbb X)
\end{equation}
However, if  $\,p = 2\,$, this property is invalid for several reasons. The best example to illustrate is a  sequence of M\"{o}bius self-homeomorphisms of the unit disk converging to a constant map, see \S\ref{seclas} formula~\eqref{eqmo}. Note that in this example all the Jacobians are nonnegative, yet the integrals (\ref{weakJac}) fail to converge. The situation is quite different if the mappings in question admit $\,\mathscr W^{1,2}$ - extension beyond $\,\overline{\mathbb X}\,$ with a nonnegative Jacobians. Precisely, we have
\begin{lemma}
 Suppose $\,\mathbb X\,$ is compactly contained in a domain $\,\mathbb X_+\,$; such is our Schottky domain $\X$ and its extension $\X_+$ by reflections. Let a sequence of mappings $\,\widehat f_k : \mathbb X_+ \into  \mathbb C \,$ with nonnegative Jacobians  be bounded in $\,\mathscr W^{1,2}(\mathbb X_+, \mathbb C )\,$ and converge to $\,f\,$ weakly in $\,\mathscr W^{1,2}(\mathbb X, \mathbb C)\,$. Then (\ref{weakJac}) holds.
\end{lemma}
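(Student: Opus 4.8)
The plan is to deduce \eqref{weakJac} from two separate facts. The first is the \emph{distributional} convergence $J(x,\widehat f_k)\to J(x,f)$ in $\X$, which will be a soft consequence of the divergence (div--curl) structure of the Jacobian together with the compact Sobolev embedding, and which will use neither the sign of $J(x,\widehat f_k)$ nor the extension. The second is the \emph{equi-integrability} of the family $\{J(x,\widehat f_k)\}$ on $\X$, and it is exactly here that both hypotheses — nonnegativity of the Jacobians on, and membership of $\W^{1,2}$ over, the strictly larger domain $\X_+$ — will enter. Write $\widehat f_k=u_k+i\,v_k$ and $f=u+i\,v$. Since $\widehat f_k\rightharpoonup f$ weakly in $\W^{1,2}(\X,\C)$ and $\W^{1,2}(\X,\C)$ embeds compactly into $\mathscr L^q(\X,\C)$ for every $q<\infty$, we get $u_k\to u$ and $v_k\to v$ strongly in $\mathscr L^2(\X)$, while $\nabla u_k\rightharpoonup\nabla u$ and $\nabla v_k\rightharpoonup\nabla v$ weakly in $\mathscr L^2(\X)$.

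The step I expect to be the genuine obstacle is the equi-integrability, and it is precisely there that the extension hypothesis is indispensable. Because $J(x,\widehat f_k)\ge 0$ on all of $\X_+$ while $\overline{\X}\Subset\X_+$, the higher-integrability principle for nonnegative Jacobian determinants (the theorem of S.~M\"uller; equivalently, the Hardy-space estimate of Coifman--Lions--Meyer--Semmes for $u_xv_y-u_yv_x$) yields a constant $C$, depending only on $\X$, $\X_+$ and $\sup_k\iint_{\X_+}\abs{\nabla\widehat f_k}^2$, such that
\[
\iint_{\X}J(x,\widehat f_k)\,\log\!\big(e+J(x,\widehat f_k)\big)\,\dtext x\;\le\;C\qquad\textnormal{for every }k.
\]
The decisive point is that $\X$ itself — its boundary included — is an \emph{interior} subdomain of $\X_+$, so that no boundary contribution can intrude; this is exactly what the reflection extension with the sign condition buys us, and it fails in its absence. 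By the de~la~Vall\'ee--Poussin criterion $\{J(x,\widehat f_k)\}$ is then equi-integrable on $\X$, and since in addition $\iint_{\X}\abs{J(x,\widehat f_k)}\le\iint_{\X_+}\abs{\nabla\widehat f_k}^2\le\const$, the Dunford--Pettis theorem makes $\{J(x,\widehat f_k)\}$ relatively weakly compact in $\mathscr L^1(\X)$.

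It then remains to pin down every weak $\mathscr L^1(\X)$-limit point, which I would do with the divergence form of the Jacobian: for $\varphi\in\mathscr C^\infty_0(\X)$,
\[
\iint_{\X}\varphi\,J(x,\widehat f_k)\,\dtext x\;=\;-\iint_{\X}u_k\big(\varphi_x\,\D_y v_k-\varphi_y\,\D_x v_k\big)\,\dtext x\,,
\]
an identity valid for planar $\W^{1,2}$-maps (by smoothing, since there the pointwise and the distributional Jacobian agree). As $u_k\varphi_x\to u\varphi_x$ and $u_k\varphi_y\to u\varphi_y$ strongly in $\mathscr L^2(\X)$ whereas $\D_y v_k\rightharpoonup\D_y v$ and $\D_x v_k\rightharpoonup\D_x v$ weakly in $\mathscr L^2(\X)$, the right-hand side tends to $-\iint_{\X}u\big(\varphi_x\,\D_y v-\varphi_y\,\D_x v\big)\,\dtext x=\iint_{\X}\varphi\,J(x,f)\,\dtext x$. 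Hence any subsequential weak $\mathscr L^1$-limit $g$ of $\{J(x,\widehat f_k)\}$ obeys $\iint_{\X}\varphi\,g=\iint_{\X}\varphi\,J(x,f)$ for all $\varphi\in\mathscr C^\infty_0(\X)$, so $g=J(x,f)$ a.e.; therefore the whole sequence converges weakly in $\mathscr L^1(\X)$ to $J(x,f)$, and testing against an arbitrary $\varphi\in\mathscr L^\infty(\X)=\big(\mathscr L^1(\X)\big)^{\ast}$ gives precisely \eqref{weakJac}. Finally, that the sign condition cannot be dispensed with is witnessed by the M\"obius self-homeomorphisms of the unit disk of \S\ref{seclas}: there the (nonnegative) Jacobians concentrate at a boundary point and \eqref{weakJac} fails.
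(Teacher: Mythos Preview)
Your argument is correct and follows precisely the route the paper indicates: the $\mathscr L\log\mathscr L$ higher integrability of nonnegative Jacobians (M\"uller; equivalently the Hardy-space estimate of Coifman--Lions--Meyer--Semmes) supplies equi-integrability on the compactly contained subdomain $\X\Subset\X_+$, and this upgrades the distributional convergence of the Jacobians (which you obtain by the standard div--curl integration-by-parts) to weak $\mathscr L^1(\X)$-convergence via Dunford--Pettis, giving \eqref{weakJac} for every $\varphi\in\mathscr L^\infty(\X)$. The paper does not spell this out but explicitly names these ingredients.

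One small correction to your closing remark: the M\"obius example does \emph{not} show that the sign hypothesis is indispensable---those Jacobians \emph{are} nonnegative. What the example shows is that the \emph{extension} hypothesis (a uniform $\mathscr W^{1,2}$-bound on a strictly larger domain $\X_+\Supset\overline{\X}$) cannot be dropped: the mass of $J(x,h_k)$ escapes to the boundary $\partial\mathbb D$, which is interior to nothing, so M\"uller's local estimate gives no uniform control there.
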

As a particular case, if $\,U \subset \mathbb X\,$ is a measurable set we obtain, upon setting $\,\varphi = \chi_{_U}\,$,  the following variant of (\ref{weakJac})
\begin{equation}\label{averJac}
\iint_U J(x, f_k) \,\textnormal d  x  \;\;\longrightarrow  \iint_U  J(x, f) \,\textnormal d  x\;
\end{equation}
In other words the averages of $\,J(x, f_k)\,$ over any measurable set $\,U\,$ converge to the average of $\,J(x, f)\,$. The proof is not straightforward, but relies on the well known $\,\mathscr L \log \mathscr L$ - integrability of nonnegative Jacobians ~\cite{Mu}. The interested reader is referred to~\cite{CLMS, IOh} for further reading about Jacobians.

\subsubsection{An auxiliary extension $\,\widehat h\,: \overline{\C} \onto \overline {\C}\,$}\label{secaux}
Proceeding in this direction, we shall further extend $\widehat{h} \colon \overline{\X}_+ \onto \overline{\Y}_+$ to the entire Riemann sphere $\widehat{\C}\,$,   again denoted by $\widehat{h} \colon \widehat{\C} \onto \widehat{\C}\,$. Let us point out that we will  need this  extension only to provide easy references to the theory of monotone mappings on $\,\mathbb S^2\,$; for example,  Theorems~\ref{Whyburn} and~\ref{Kuratowski-Lacher}. Let $\,\X_i^j\,$ and $\,\Y^j_i\,$, $\,i \not = j\,$, denote the disks enclosed by the circles $\,\x_i^j\,$ and  $\Upsilon_i^j\,$, respectively. There is one exception to this notation; namely, the case when the circles $\x_i^j \subset \partial \X_+$ and  $\Upsilon_i^j \subset \partial \Y_+\,$ are the outer boundaries. In this case $\,\X_i^j\,$ and $\,\Y_i^j\,$  stand for the complements of the disks. We then extend  each boundary map $\,\widehat h \colon \x_i^j \onto \Upsilon_i^j$, $i \not = j\,$, in a radial fashion into   $\,\X_i^j\,$ and $\,\Y^j_i\,$, and continue to write $\,\widehat{h} \colon \X_i^j \onto \Y_i^j$, $i \not = j\,$.   As an illustration, the radial extension into the disks  $\,\x_i^j =\{r e^{i \theta} \colon 0 \le \theta < 2 \pi\}$ and $\Upsilon_i^j = \{Re^{i \theta} \colon 0 \le \theta < 2 \pi\}\,$ takes the form
\[     \widehat{h} \colon \X_i^j \to \Y_i^j  \;\;, \;\;\;\;\;\;\;\; \widehat{h}(\rho e^{i \theta})= \frac{\rho}{r}\, \widehat h(r e^{i \theta})\]
for $0 \le \rho \le r$, and for $r \le \rho \le \infty$ in case of the complements of the disks.

There will be no need for Sobolev regularity of $\,\widehat{h}\,$ outside $\X_+$. Of course, continuity  of $\widehat{h} \colon \widehat{\C} \to \widehat{\C}$ is understood with respect to the choardal metric induced by the stereographic projection of $\widehat{\C}$ onto $\mathbb S^2$.

Observe that a uniform convergence of continuous mappings $\,h_j \colon \overline{\X} \onto \overline{\Y}$, $h_j \colon \x_i \to \Upsilon_i$, $i=1, \dots, \ell$, $j=1,2, \dots$ to a mapping $\,h \colon \overline{\X} \onto \overline{\Y}$
yields uniform convergence of $\,\widehat{h}_j \colon \widehat{\C} \onto \widehat{\C}\,$ to $\,\widehat{h} \colon \widehat{\C} \onto \widehat{\C}$. If, in addition, all the mappings $\,h_j\,$ are monotone then so is the limit mapping $\,\widehat{h}\,$.
\subsection{The $\,p$ -harmonic Dirichlet problem} There are  two common settings of the Dirichlet problem. The classical one, with a continuous boundary data, combines Perron method and Winer's criterion of regular points on the boundary of a domain.
In the variational approach, on the other hand, one seeks to minimize the energy integral over the class of functions in $\; u+ \mathscr W_\circ^{1,p}(\Omega)\,$, where $\;u \in  \mathscr W^{1,p}(\Omega)$ is viewed as the boundary data. Even when these two different settings are well defined the question whether they lead to the same solution  involves a delicate analysis of the boundary of the domain. Strangely, in the widely spread theory of the Dirichlet integral,   explicit statements concerning simply connected domains appear to be rare in the literature. The equivalence of these two settings is vital in our approach.

\begin{theorem}\label{winlem}
Let $\Omega \subset \mathbb R^2\,$ be a bounded simply connected domain  and $u\in \mathscr C (\overline{\Omega}) \cap \mathscr W^{1,p}(\Omega)$, $1<p< \infty$. Then there exists unique function $\tilde{u}\in \mathscr C (\overline{\Omega}) \cap \mathscr W^{1,p}(\Omega)$ that is $p$-harmonic in $\Omega$ and equals $\, u\,$ on $\,\partial \Omega\,$. Furthermore, $\tilde{u}\in u+ \mathscr W_\circ^{1,p}(\Omega)$ and
\[\mathcal E_p [\tilde{u}] \le \mathcal E_p[u]\]
equality occurs if and only if $\tilde{u} \equiv u$.
\end{theorem}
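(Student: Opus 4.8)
The plan is to realize $\tilde u$ as the unique minimizer of the $p$-Dirichlet energy in the affine class $u+\mathscr W_\circ^{1,p}(\Omega)$, and then to show that this variational solution extends continuously to $\overline\Omega$ with boundary values $u$. The point that makes the whole argument go through is that \emph{every} boundary point of a bounded simply connected planar domain is regular for the $p$-Laplacian, which is precisely what reconciles the variational and the Perron formulations.

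First I would run the direct method of the calculus of variations. The class $u+\mathscr W_\circ^{1,p}(\Omega)$ is nonempty (it contains $u$), convex, and strongly closed in $\W^{1,p}(\Omega)$; since $\Omega$ is bounded, the Poincar\'e inequality makes $v\mapsto\iint_\Omega|\nabla v|^p$ coercive on it, and convexity of $\xi\mapsto|\xi|^p$ yields weak lower semicontinuity. Hence a minimizer $\tilde u$ exists, and strict convexity of $|\xi|^p$ for $1<p<\infty$ forces it to be unique. Since $u$ itself is admissible, $\mathcal E_p[\tilde u]\le\mathcal E_p[u]$, with equality only when $u$ is also a minimizer, i.e. $u\equiv\tilde u$. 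Taking the first variation shows that $\tilde u$ is a weak solution of $\Div(|\nabla\tilde u|^{p-2}\nabla\tilde u)=0$; interior regularity theory for the $p$-Laplacian then gives $\tilde u\in\mathscr C^{1,\alpha}_{\loc}(\Omega)$, so $\tilde u$ is $p$-harmonic in the classical sense, still with $\tilde u-u\in\mathscr W_\circ^{1,p}(\Omega)$.

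The heart of the matter is boundary behaviour. Because $\Omega$ is simply connected and bounded, $\C\setminus\Omega$ is a closed, connected, unbounded set. Fix $x_0\in\partial\Omega$ and a small radius $r$. By the boundary bumping lemma of continuum theory, the component of $x_0$ in $(\C\setminus\Omega)\cap\overline{B(x_0,r)}$ meets $\partial B(x_0,r)$, so $\C\setminus\Omega$ contains a continuum $K_r\subset\overline{B(x_0,r)}$ with $\diam K_r\ge r$. In the plane every nondegenerate continuum carries positive $p$-capacity for each $p>1$ — this is exactly where $n=2$ enters, the threshold being $p>n-1=1$ — whence one obtains the uniform capacity density estimate
\[
\mathrm{cap}_p\big(\overline{B(x_0,r)}\setminus\Omega,\,B(x_0,2r)\big)\;\ge\;c(p)\,\mathrm{cap}_p\big(\overline{B(x_0,r)},\,B(x_0,2r)\big).
\]
Consequently the Wiener integral at $x_0$ diverges, and by the nonlinear Wiener criterion $x_0$ is a regular boundary point. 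Since $x_0$ was arbitrary, $\Omega$ is regular for the $p$-Laplacian, which means the $\W^{1,p}$-solution with continuous data $u$ attains the limit $u(x_0)$ at every $x_0\in\partial\Omega$; together with the interior regularity above this gives $\tilde u\in\mathscr C(\overline\Omega)\cap\W^{1,p}(\Omega)$ and $\tilde u=u$ on $\partial\Omega$. Uniqueness within $\mathscr C(\overline\Omega)\cap\W^{1,p}(\Omega)$ then follows from the comparison principle for $p$-harmonic functions: two such solutions are $p$-harmonic with identical continuous boundary values, and the capacity density condition just established guarantees that their difference lies in $\mathscr W_\circ^{1,p}(\Omega)$, so the comparison principle forces them to coincide.

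I expect the only genuine obstacle to be the passage in the third paragraph: from the soft topological input (connectedness of the complement of a simply connected domain) to the quantitative thickness of $\partial\Omega$ at every single point, and the invocation of the $p$-harmonic Wiener criterion both to get continuity up to $\overline\Omega$ and to identify the variational solution with the Perron solution. The remaining ingredients — existence, uniqueness and the energy inequality for the minimizer, interior regularity of $p$-harmonic functions, and the comparison principle — are standard.
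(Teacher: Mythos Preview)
Your proposal is correct and follows essentially the same approach as the paper. The paper's own proof is in fact only a sketch with references: it singles out the nonobvious step---that the variational solution extends continuously to $\overline{\Omega}$ because every boundary point of a bounded simply connected planar domain is $p$-regular---and cites the relevant capacity estimates (Heinonen--Kilpel\"ainen--Martio, Kilpel\"ainen--Mal\'y, Lehrb\"ack, Haj\l asz); your argument via connectedness of $\mathbb{R}^2\setminus\Omega$, the boundary bumping lemma, and the $p$-capacity density of nondegenerate planar continua is exactly the content behind those citations.
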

\begin{proof}We shall not give all details for the proof. Nevertheless, it is worth remarking (because it is not obvious) that the variational solution $\tilde{u}\,$ extends continuously up to the boundary. This is because each boundary point of a planar simply connected domain is a regular point for
the $p$-Laplace operator $\Delta_p$~\cite[p.418]{Ha}. See~\cite[6.16]{HKMb} for the full discussion of boundary regularity and relevant notion of capacities. In particular, we refer the reader to~\cite[Lemma 5.3]{HKM}, see also~\cite[Lemma 4.1]{KM} and~\cite[Lemma 2]{Le} for a capacity estimate that applies to simply connected domains.
\end{proof}
\subsection{Univalent $p$-harmonic extension}
The celebrated Rad\'{o}-Kneser-Choquet theorem asserts that a harmonic function $\,h : \Omega \rightarrow \mathbb C\,$ in a Jordan domain, which extends continuously as a homeomorphism of $\,\partial \Omega\,$ onto a closed convex curve $\,\Gamma\subset \mathbb C\,$,  is a $\,\mathscr C^\infty$ -diffeomorphism of $\, \Omega\,$ onto the bounded component of $\,\mathbb C \setminus \Gamma\,$.  In fact our proof of Theorem~\ref{thmain} in case $\,p=2\,$ relies heavily on the theory of harmonic mappings. An excellent reference is~\cite{Dub}.  Similar arguments will apply to the general case of $\,p>2\,$. However, new ingredients (the $\,p$ -harmonic variant of Hurwitz theorem) will be needed. Meanwhile, let us call upon  the following variant of the Rad\'{o}-Kneser-Choquet theorem from the work of Alessandrini and  Sigalotti~\cite{AS}, see  Theorem 5.1 therein.
\begin{theorem}\label{AS}  Suppose that $\,\Omega \subset \mathbb C\,$ is a simply connected Jordan domain, G a bounded convex domain and $\,h \in \mathscr C(\overline{\Omega}) \cap \mathscr W^{1,p}_{\textnormal{loc}}(\Omega)\,$ a $\,p$ -harmonic map that takes $\,\partial \Omega\,$ homeomorphically onto $\,\partial G\,$. Then $\,h : \Omega \onto  G\,$ is a $\,\mathscr C^\infty$ -diffeomorphism.
\end{theorem}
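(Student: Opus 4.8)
The plan is to run the classical proof of the Rad\'{o}--Kneser--Choquet theorem, replacing the two places where harmonicity is used --- non-vanishing of the component gradients and of the Jacobian --- by facts available for the planar $p$-Laplace equation. Write $\,h = u + iv\,$, so that $\,u\,$ and $\,v\,$ separately solve $\,\Div(|\nabla u|^{p-2}\nabla u) = 0\,$ and $\,\Div(|\nabla v|^{p-2}\nabla v) = 0\,$ in $\,\Omega\,$. \emph{Step 1 (regularity and local structure).} By interior regularity for the $\,p$-Laplacian, $\,u, v \in \mathscr C^{1,\alpha}_{\textnormal{loc}}(\Omega)\,$, and away from the critical set $\,\{\nabla u = 0\}\,$ the coefficient $\,|\nabla u|^{p-2}\,$ is a smooth function of $\,\nabla u\,$, so $\,u\,$ is real-analytic there; likewise for $\,v\,$. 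Moreover the complex gradient $\,f_u = u_{x_1} - i u_{x_2}\,$ of a planar $\,p$-harmonic function is quasiregular (Bojarski--Iwaniec), hence its zeros are isolated of finite integer order; consequently near a critical point $\,z_0\,$ one has $\,u(z) - u(z_0) = \re\big(a(z - z_0)^{k+1}\big) + o(|z - z_0|^{k+1})\,$ with $\,k \ge 1\,$, so the level set $\,\{u = u(z_0)\}\,$ near $\,z_0\,$ consists of $\,k+1 \ge 2\,$ analytic arcs crossing at $\,z_0\,$; similarly for $\,v\,$.

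\emph{Step 2 ($\nabla u \ne 0$ and $\nabla v \ne 0$ on $\Omega$).} Suppose $\,\nabla u(z_0) = 0\,$ for some $\,z_0 \in \Omega\,$, and put $\,c = u(z_0)\,$. By the strong maximum principle $\,\min_{\D\Omega} u < c < \max_{\D\Omega} u\,$. The nodal set $\,N = \{u = c\}\,$ carries no loop (a loop would enclose a subdomain on whose boundary $\,u \equiv c\,$, forcing $\,u \equiv c\,$), and off the critical set it is a $\,1$-manifold; together with the local picture of Step 1 this forces at least four arcs of $\,N\,$ to reach $\,\D\Omega\,$, hence at least four sign changes of $\,u - c\,$ along $\,\D\Omega\,$. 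On the other hand, since $\,h|_{\D\Omega}\,$ is a homeomorphism onto the convex curve $\,\D G\,$ and a vertical line $\,\{\re w = c\}\,$ meets $\,\D G\,$ in exactly two points for $\,c\,$ strictly between the extreme real values (a convex boundary has no vertical edge at an interior height), $\,u - c\,$ changes sign exactly twice on $\,\D\Omega\,$ --- a contradiction. Thus $\,\nabla u \ne 0\,$ on $\,\Omega\,$, and symmetrically $\,\nabla v \ne 0\,$. In particular, for each $\,c\,$ with $\,\min_{\D\Omega} u < c < \max_{\D\Omega} u\,$ the level set $\,\{u = c\}\,$ is a single embedded analytic crosscut $\,\gamma_c\,$ joining the two boundary points at which $\,u = c\,$, and these crosscuts foliate $\,\Omega\,$; symmetrically for the level crosscuts $\,\delta_d = \{v = d\}\,$.

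\emph{Step 3 ($J_h \ne 0$ on $\Omega$) --- the main obstacle.} Because $\,u\,$ and $\,v\,$ solve \emph{different} nonlinear equations, Lewy's device of passing to the harmonic function $\,v - tu\,$ is unavailable; this is the heart of the matter. Instead one shows directly that the foliations $\,\{\gamma_c\}\,$ and $\,\{\delta_d\}\,$ are transverse. A failure of transversality at $\,z_0\,$ --- i.e. $\,J_h(z_0) = 0\,$ with $\,\nabla u(z_0), \nabla v(z_0) \ne 0\,$ --- means $\,\delta_{v(z_0)}\,$ is tangent to $\,\gamma_{u(z_0)}\,$ at $\,z_0\,$, equivalently $\,v\,$ restricted to $\,\gamma_{u(z_0)}\,$ has an interior critical point at $\,z_0\,$. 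Using that $\,\gamma_{u(z_0)}\,$ and $\,\delta_{v(z_0)}\,$ are global crosscuts, each separating $\,\Omega\,$ into exactly two pieces on which $\,u - u(z_0)\,$, resp. $\,v - v(z_0)\,$, have definite signs, and that (again by convexity of $\,G\,$, as in Step 2) the four endpoints of these crosscuts interlace on $\,\D\Omega\,$, one sees that a tangency produces either a nodal loop of $\,u - u(z_0)\,$ or of $\,v - v(z_0)\,$, or an interior sign reversal incompatible with the two-sign-change property --- a contradiction with the maximum principle in either case. Hence $\,J_h \ne 0\,$ throughout $\,\Omega\,$; since $\,\Omega\,$ is connected $\,J_h\,$ has constant sign, which is positive because $\,h|_{\D\Omega}\,$ is an orientation-preserving homeomorphism onto $\,\D G\,$.

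\emph{Step 4 (global diffeomorphism).} By Steps 2--3, $\,h\,$ is real-analytic on $\,\Omega\,$ (elliptic regularity, the equations having analytic coefficients where $\,\nabla u, \nabla v \ne 0\,$) with $\,J_h > 0\,$, hence a local $\,\mathscr C^\infty$-diffeomorphism and an open map. Since $\,h|_{\D\Omega}\colon \D\Omega \to \D G\,$ is an orientation-preserving homeomorphism, $\,\deg(h, \Omega, w) = 1\,$ for $\,w \in G\,$ and $\,= 0\,$ for $\,w \notin \overline G\,$; as each preimage contributes $\,+1\,$ to the degree and $\,h(\Omega)\,$ is open, every point of $\,G\,$ has exactly one preimage and $\,h(\Omega) = G\,$. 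Therefore $\,h\colon \Omega \onto G\,$ is a bijective local $\,\mathscr C^\infty$-diffeomorphism, i.e. a $\,\mathscr C^\infty$-diffeomorphism. The decisive difficulty is Step 3: absent the linear structure underlying Lewy's lemma, excluding tangencies between the $\,u$- and $\,v$-level foliations requires the finer planar theory of level curves of $\,p$-harmonic functions together with the convexity of the target.
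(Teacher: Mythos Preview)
The paper does not prove this theorem; it is quoted verbatim from Alessandrini--Sigalotti \cite{AS} (Theorem~5.1 there) and used as a black box. So there is no in-paper argument to compare against, only the original source.

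Your Steps 1, 2, and 4 are sound and follow the classical Rad\'o--Kneser--Choquet template, with the Bojarski--Iwaniec quasiregularity of the complex gradient supplying the local level-set structure that harmonicity provides in the linear case.

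Step 3 is where the argument breaks, and you rightly flag it as the crux. There are two concrete gaps. First, your claim that the four boundary endpoints of $\gamma_{u(z_0)}$ and $\delta_{v(z_0)}$ interlace on $\partial\Omega$ is equivalent, via the boundary homeomorphism, to the claim that the four points $\partial G\cap\{x=c\}$ and $\partial G\cap\{y=d\}$ interlace on $\partial G$. For a convex $G$ this holds if and only if $(c,d)\in G$; take $G$ the unit disk and $c=d=0.9$ to see that it fails when $(c,d)\notin G$. You have not shown $h(z_0)\in G$. In the harmonic case this comes from the maximum principle applied to every linear functional $\langle h,e\rangle$ --- exactly the device you have already acknowledged is unavailable for uncoupled $p$-harmonic components. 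Second, even granting interlacing, the sentence ``a tangency produces either a nodal loop \ldots\ or an interior sign reversal'' is not an argument. A tangency of two smooth crosscuts with interlacing endpoints forces a second intersection, hence a second preimage of $(c,d)$ under $h$; but neither level set $\{u=c\}$ nor $\{v=d\}$ acquires a loop (each remains a single crosscut by Step~2), and the two-sign-change property on $\partial\Omega$ is untouched. Some further mechanism is needed to turn the tangency into a contradiction, and none is supplied.

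Alessandrini--Sigalotti close this step with additional tools specific to planar elliptic equations in divergence form (a topological-index analysis of critical points and level sets that does not rely on linear combinations of $u$ and $v$). What you have written does not reproduce their argument, and as it stands Step~3 is a genuine hole rather than a sketch that can be routinely filled in.
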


\section{Proof of Theorem \ref{thmain} for multiply connected domains}
Some additional prerequisites, more specific to our proof, are in order. We present them in the following subsections.

\subsection{Squares and cells}\label{secsqu}
Let for a moment $\widehat{h} \colon \widehat{\C} \onto \widehat{\C}$ be a general continuous monotone map, $\widehat{h} \colon \C \to \C$, $\widehat{h}(\infty)=\infty$. We shall work with an open square $\mathbb Q \subset \C$  and its preimage $\Omega = \widehat{h}^{-1}(\mathbb Q)$.  The set $\Omega$ is called  a {\it cell}, which is surely a domain. Its complement $\widehat{\C} \setminus \Omega$, being equal to $\widehat{h}^{-1} (\widehat{\C} \setminus \mathbb Q)$, is also connected. In view of unicoherence of $\widehat{\C}$, the boundary  $\partial \Omega = \overline{\Omega}\cap (\widehat{\C} \setminus \Omega)$ is connected. Thus $\Omega$ is a simply connected domain. As for the preimage $\widehat{h}^{-1} (\partial \mathbb Q)$ a caution is required. Although it is straightforward that $\partial \widehat{h}^{-1}(\mathbb Q) \subset \widehat{h}^{-1} (\partial \mathbb Q)$ the set $\mathcal C :=\widehat{h}^{-1} (\partial \mathbb Q)$ can be significantly larger than $\partial \widehat{h}^{-1}(\mathbb Q)$. We note that $\mathcal C$ is a continuum disconnecting $\widehat{\C}\,$ into two components, see Proposition \ref{Mon} . Also note that the bounded component of $\,\mathbb C \setminus \mathcal C\,$ equals  $\,\Omega=\widehat{h}^{-1} (\mathbb Q)$.

Next consider an increasing sequence $\mathbb Q_1 \Subset \mathbb Q_2 \Subset \dots \Subset \dots \mathbb Q$ of open squares, $\mathbb Q_n =\lambda_n \mathbb Q$, $0< \lambda_1 < \lambda_2 \dots \to 1$, so $\bigcup_{n=1}^\infty \mathbb Q_n = \mathbb Q$.  Hereafter, the notation $\lambda \mathbb Q$ stands for a square with the same center as $\mathbb Q$ but $\lambda$-times smaller than $\mathbb Q$. We will be dealing with the induced cells $\Omega_1 \Subset \Omega_2 \Subset \dots \Subset \dots \Omega\,$, where $\,\Omega_n = \widehat{h}^{-1} (\mathbb Q_n)\,$, so  $\,\bigcup_{n=1}^\infty \Omega_n = \Omega\,$. Choose  one of the boundaries $\partial \mathbb Q_n\,$, $n=2,3, \dots\,$.  This is a  Jordan curve which separates $ \overline{\mathbb Q}_{n-1}$ from $\partial \mathbb Q_{n+1}$. By virtue of monotonicity of $\widehat{h} \colon \widehat{\C} \onto \widehat{\C}$ we see that $C_n := \widehat{h}^{-1} (\partial \mathbb Q_n)$ is a continuum in $\Omega_{n+1}\,$. This continuum  separates $\partial \Omega_{n+1}$ from $\overline{\Omega}_{n-1}$.  Precisely, $\,C_n \subset \Omega_{n+1}\,$ and the bounded component of $\widehat{\C} \setminus C_n$  contains $\overline{\Omega}_{n-1}$,  because no point in $\Omega_n = \widehat{h}^{-1} (\mathbb Q_n)$ can be path-connected with $\infty$ within the open set  $\widehat{\C} \setminus C_n$.

Now supposed we are given a sequence $\{\widehat{h}_j\}^\infty_{j=1}$ of continuous monotone mappings $\widehat{h}_j \colon \widehat{\C} \to \widehat{\C}\,$, such that $\,\widehat{h}_j \colon \C \to \C\,$, and $\,\widehat{h}(\infty)=\infty\,$. We assume that this sequence converges uniformly to $\widehat{h} \colon \widehat{\C} \to \widehat{\C}$. Denote the preimages of $\partial \mathbb Q_n$ under $\widehat{h}_j\,$ by $\,C_n^j=\widehat{h}_j^{-1}(\partial \mathbb Q_n)\,, j = 1,2, ...\;$. These are continua disconnecting $\widehat{\C}$ into two components. The sets
\begin{equation}\label{eq521}
\Omega_n^j := \widehat{h}_j^{-1} (\mathbb Q_n)
\end{equation}
are the bounded components of $\widehat{\C} \setminus C_n^j$. Just the uniform convergence $\widehat{h}_j \rightrightarrows \widehat{h}$ yields
\[\lim\limits_{j \to \infty} \sup\limits_{x\in C_n^j} \dist (x, C_n)=0 \, .\]
Therefore, for sufficiently large $\,j\,$, say $\,j \ge j_n$, the continua $C_n^j$ separate $\overline{\Omega}_{n-1}$ from $\partial \Omega_{n+1}$. We record this fact as
\begin{equation}\label{eq52}
\overline{\Omega}_{n-1} \subset \Omega_n^j \subset \overline{\Omega}_n^j \subset \Omega_{n+1}\, , \quad n \ge 2 \textnormal{ and } j \ge j_n\, .
\end{equation}

\subsubsection{Cutting a cell}\label{secspl} From now on the continuous monotone mapping $\widehat{h} \colon \widehat{\C} \to \widehat{\C}$   will be the one that is obtained from  the extension of $h \colon \overline{\X} \onto \overline{\Y}$ between circular domains, as in \S\ref{secext}. Fix one of the boundary circles $\x_1, \dots , \x_\ell \subset \partial \X$, say $\x_i$ for some $i=1, \dots , \ell$, and recall that $\Upsilon_i =h(\x_i)$ is the corresponding boundary circle in $\partial \Y$. We also recall the notation $\mathbb X_i$ for a reflection of $\X$  about $\x_i$, and $\Y_i$  for a reflection of $\Y$  about $\Upsilon_i$. Now consider an open square $\mathbb Q$ that intersects $\Upsilon_i$ along an open arc $\gamma= \Upsilon_i \cap \mathbb Q$. We assume that $\mathbb Q$ is small enough so it lies entirely in $\Y \cup \Upsilon_i \cup \Y_i$.  The  arc $\gamma$ is a cross-cut of $\mathbb Q$; it cuts $\mathbb Q$ into two connected subdomains $\mathbb Q \cap \Y$ and $\mathbb Q \cap \Y_i$. We shall now discuss in some detail  an analogous cross-cut of the cell $\Omega=\widehat{h}^{-1}(\mathbb Q)$. To this end we notice that the boundary map $h \colon \x_i \onto \Upsilon_i$ defines an open subarc of $\,\x_i\,$ which lies in $\,\Omega\,$
\[\beta = \{x \in \x_i \colon h(x)\in \gamma\}  \, .\]
 The endpoints of $\beta$ belong to $\partial \Omega$. Indeed, if $x$ is an endpoint of $\beta$ then there are points $x_\nu \in \beta \subset \Omega$ which converge to $x \not \in \beta$,  thus $h(x_\nu) \in \gamma$. Passing to the limit we obtain $h(x)\in \overline{\gamma} \, \setminus \,  \gamma \subset \partial \mathbb Q$. Therefore, $x \not \in h^{-1} (\mathbb Q)= \Omega$, meaning that $x\in \partial \Omega$.

This is surely a geometric folklore that an arc $\beta \subset \Omega$ (in a simply connected domain $\Omega$) whose endpoints lie in $\partial \Omega$ splits $\Omega$ into two connected subdomains; namely,
 \begin{equation}U= \X \cap \Omega \quad \textnormal{ and } \quad \X_i \cap \Omega\,   \end{equation}
This latter subdomain will be of no interest to us.

\subsection{Hurwitz's theorem for $p$-harmonic mappings}\label{sechur} Let us begin withg the classical Hurwitz theorem:
\begin{theorem}\label{huthm1}
If a sequence of holomorphic functions $f_n \colon \Omega \to \C$ converges $c$-uniformly to a holomorhic function $f \colon \Omega \to \C$, and $f_n(z) \not = 0$ for all $z\in \Omega$ and $n=1,2, \dots$, then $f$ is either equal  identically to zero or $f(z) \not = 0 $ for all $z\in \Omega$.
\end{theorem}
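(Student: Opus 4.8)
The plan is to reduce the classical Hurwitz theorem for holomorphic functions to a standard argument via Rouché's theorem, then develop the $p$-harmonic analogue in the subsequent subsection. For Theorem~\ref{huthm1} itself, I would argue by contradiction: suppose $f \not\equiv 0$ but $f(z_0)=0$ for some $z_0 \in \Omega$. Since the zeros of a non-identically-zero holomorphic function are isolated, choose $r>0$ so small that the closed disk $\overline{D}(z_0,r) \subset \Omega$ and $f(z)\neq 0$ on the circle $\partial D(z_0,r)$. Set $m = \min_{|z-z_0|=r}|f(z)| > 0$.

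**Next I would** invoke the $c$-uniform (locally uniform) convergence $f_n \to f$ to find $N$ such that for all $n \ge N$ we have $|f_n(z)-f(z)| < m \le |f(z)|$ for all $z$ on the circle $\partial D(z_0,r)$; this is possible because $\overline{D}(z_0,r)$ is compact. By Rouché's theorem, $f_n$ and $f$ then have the same number of zeros (counted with multiplicity) inside $D(z_0,r)$. Since $f$ has at least one zero there (namely $z_0$), so does $f_n$, contradicting the hypothesis that $f_n$ is nowhere zero on $\Omega$. This establishes the dichotomy: either $f\equiv 0$, or $f$ has no zeros in $\Omega$.

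**The main subtlety** — and the reason the authors state this classical result explicitly — is not the proof of Theorem~\ref{huthm1}, which is textbook material, but rather the transition to the $p$-harmonic setting that follows in \S\ref{sechur}. There the ``nonvanishing'' condition on holomorphic functions must be replaced by an appropriate nondegeneracy condition for $p$-harmonic maps (e.g.\ that the Jacobian, or the complex gradient $h_z$, does not vanish, so that the map stays locally injective), and $c$-uniform convergence of holomorphic functions must be replaced by a suitable convergence of $p$-harmonic mappings for which a Rouché-type or degree-theoretic argument still applies. Since the $p$-harmonic equation is only quasilinear and its solutions are not given by power series, one cannot use the argument principle directly; instead one expects to use the local structure theory of $p$-harmonic maps (as in Theorem~\ref{AS} and the Alessandrini--Sigalotti framework) together with topological degree to show that the limit of locally injective $p$-harmonic maps, if non-constant, remains locally injective. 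That degree-theoretic replacement for the argument principle is where the real work lies, and it is needed precisely to push the proof of Theorem~\ref{thmain} from $p=2$ to the range $p>2$.
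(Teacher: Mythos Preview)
Your Rouch\'e argument for Theorem~\ref{huthm1} is correct and is the standard textbook proof; the paper itself offers no proof of this statement, treating it as classical and merely recalling it before moving on.

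Your speculation about the $p$-harmonic analogue, however, diverges from what the paper actually does in the proof of Theorem~\ref{p-Hurwitz}. You anticipate a degree-theoretic argument replacing the argument principle, relying on the Alessandrini--Sigalotti local structure theory. The paper instead exploits the fact that the complex gradient $u_z$ of a planar $p$-harmonic function is $K$-quasiregular (it satisfies the quasilinear Beltrami-type equation~\eqref{heq2}), so Hurwitz for quasiregular mappings (Corollary~\ref{qrHur}, via Stoilow factorization) applies directly to each of $f=u_z$ and $g=v_z$. The delicate point is ruling out $J_\varphi(z_\circ)=0$ at an isolated point when neither $f$ nor $g$ vanishes; here the paper forms $F^n=\alpha f^n+\beta g^n$, derives a first-order elliptic inequality for $F^n$ with locally bounded coefficients, and then removes the zero-order term by solving an auxiliary inhomogeneous Beltrami equation for a multiplier $e^{\lambda^n}$, reducing again to the quasiregular Hurwitz theorem. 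So the engine is quasiregular function theory rather than topological degree, and the Alessandrini--Sigalotti result enters only to guarantee that the approximating $p$-harmonic homeomorphisms are in fact diffeomorphisms with nonvanishing Jacobian.
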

As a particular consequence of Hurwitz's theorem one obtains
\begin{theorem}\label{huthm2}
If a sequence of conformal mappings $\varphi_n \colon \Omega \to \C$ converges $c$-uniformly to $\varphi \colon \Omega \to \C$, then $\varphi$ is either constant or conformal.
\end{theorem}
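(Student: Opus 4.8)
\emph{The plan} is to deduce this from Hurwitz's theorem (Theorem~\ref{huthm1}) in the classical way. First I would note that a $c$-uniform limit of holomorphic functions is holomorphic, so $\varphi\colon\Omega\to\C$ is holomorphic at the outset. If $\varphi$ is constant there is nothing to prove, so assume henceforth that $\varphi$ is non-constant, and aim to show that $\varphi$ is injective (an injective holomorphic function on a planar domain is automatically conformal, since its derivative cannot vanish).

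I would argue by contradiction. Suppose $\varphi$ is non-constant but not injective, so there are distinct points $z_1,z_2\in\Omega$ with $\varphi(z_1)=\varphi(z_2)=:w_0$. Pick a disk $D=D(z_1,r)\subset\Omega$ small enough that $z_2\notin\overline{D}$, and on $D$ consider the sequence
\[
f_n(z) := \varphi_n(z)-\varphi_n(z_2).
\]
Since each $\varphi_n$ is conformal, hence injective, and $z\neq z_2$ for every $z\in D$, we have $f_n(z)\neq 0$ throughout $D$; moreover $f_n\to f$ $c$-uniformly on $D$, where $f(z)=\varphi(z)-\varphi(z_2)$.

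Now apply Theorem~\ref{huthm1} to $\{f_n\}$ on $D$: either $f\equiv 0$ on $D$, or $f$ is zero-free on $D$. The first alternative would force $\varphi\equiv\varphi(z_2)$ on $D$, hence on all of $\Omega$ by the identity theorem, contradicting the non-constancy of $\varphi$. The second alternative is impossible because $z_1\in D$ and $f(z_1)=\varphi(z_1)-\varphi(z_2)=w_0-w_0=0$. This contradiction shows $\varphi$ is injective, and therefore conformal, which completes the proof.

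The argument is essentially routine; the only point deserving slight care is the bookkeeping in the Hurwitz dichotomy, where one must exclude the ``identically zero'' branch — and it is precisely here that the hypothesis ``$\varphi$ is not constant'' enters, via the identity theorem.
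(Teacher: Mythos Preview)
Your argument is correct and is precisely the classical derivation the paper has in mind: the paper does not spell out a proof of Theorem~\ref{huthm2} at all, but simply introduces it with the phrase ``As a particular consequence of Hurwitz's theorem one obtains\ldots''. You have supplied exactly that standard consequence, so there is nothing to add.
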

In our arguments we will explore  an adaptation of these two results to quasiregular mappings. It is immediate from Stoilow factorization of quasiregular mappings~\cite{Ahb, AIMb} that
\begin{corollary}\label{qrHur}
 In Theorem~\ref{huthm1} holomorphic functions can be replaced by $K$-quasiregular mappings while in Theorem~\ref{huthm2} conformal mappings can be replaced by $K$-quasiconformal homeomorphisms.
 \end{corollary}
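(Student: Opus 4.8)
The plan is to reduce both assertions to the classical Hurwitz and Hurwitz-type theorems (Theorems \ref{huthm1} and \ref{huthm2}) by means of the Stoilow factorization. Recall that every $K$-quasiregular mapping $f \colon \Omega \to \C$ admits a representation $f = F \circ \chi$, where $\chi \colon \Omega \onto \Omega'$ is a $K$-quasiconformal homeomorphism onto a planar domain $\Omega'$ and $F \colon \Omega' \to \C$ is holomorphic; moreover $f$ vanishes exactly where $F$ vanishes, since $\chi$ is a bijection. For the quasiconformal case, a nonconstant $K$-quasiconformal homeomorphism $\varphi$ factors as $\varphi = F \circ \chi$ with $F$ conformal.

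For the first statement, suppose $f_n \colon \Omega \to \C$ are $K$-quasiregular, $f_n(z) \neq 0$ on $\Omega$, and $f_n \to f$ $c$-uniformly. First I would invoke the compactness properties of families of $K$-quasiregular mappings with a uniform distortion bound: after normalizing the Stoilow factorizations $f_n = F_n \circ \chi_n$ (say fixing two points and their images for the quasiconformal factors $\chi_n$), one extracts a subsequence along which $\chi_n$ converges $c$-uniformly to a $K$-quasiconformal homeomorphism $\chi$ (the limit is again $K$-quasiconformal, not constant, by the normalization), and correspondingly $F_n \to F$ $c$-uniformly on the limiting domain with $F$ holomorphic. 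Since each $F_n = f_n \circ \chi_n^{-1}$ is zero-free, the classical Hurwitz Theorem \ref{huthm1} forces $F$ to be either identically zero or zero-free; pulling back by the homeomorphism $\chi$ gives that $f = F \circ \chi$ is either identically zero or zero-free on $\Omega$. A subsequence argument then yields the conclusion for the original sequence, since the dichotomy ``$f \equiv 0$'' versus ``$f$ zero-free'' is determined by $f$ itself. The quasiconformal statement is handled the same way: a $c$-uniform limit $\varphi$ of $K$-quasiconformal maps $\varphi_n$ is $K$-quasiconformal or constant by the standard compactness theorem, and if nonconstant we factor $\varphi = F \circ \chi$, observe that the conformal factors $F_n$ of $\varphi_n = F_n \circ \chi_n$ converge to the conformal or constant $F$ (by Theorem \ref{huthm2}) after suitable normalization, and $F$ cannot be constant since $\varphi$ is not; hence $\varphi$ is $K$-quasiconformal.

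The main obstacle I anticipate is controlling the convergence of the Stoilow factors: a $c$-uniformly convergent sequence of quasiregular mappings need not have $c$-uniformly convergent Stoilow factorizations without a normalization, and the limiting domain $\Omega' = \lim \chi_n(\Omega)$ requires care (one works locally on relatively compact subdomains, or uses the measurable Riemann mapping theorem to realize all $\chi_n$ as solutions of Beltrami equations with coefficients converging weak-$*$, so that the $\chi_n$ converge locally uniformly to the normalized solution of the limiting Beltrami equation). Once this compactness is in place the reduction to Theorems \ref{huthm1} and \ref{huthm2} is immediate, and indeed the corollary is, as stated, essentially a formal consequence of Stoilow factorization together with the holomorphic case.
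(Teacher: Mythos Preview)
Your approach is correct and is exactly the route the paper indicates: the paper dispatches the corollary in one line, saying it is ``immediate from Stoilow factorization of quasiregular mappings'', with no further details. Your write-up supplies one natural way to unpack that sentence.

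One remark on efficiency: rather than factoring each $f_n$ and wrestling with convergence of the two factors, it is slightly quicker to use Stoilow only on the \emph{limit}. The $c$-uniform limit $f$ of $K$-quasiregular maps is again $K$-quasiregular; if $f\not\equiv 0$, write $f=F\circ\chi$ with $F$ holomorphic and nonconstant, so zeros of $f$ are isolated. About any putative zero $z_0$ choose a small Jordan curve $\gamma$ on which $f\neq 0$; the winding number of $f(\gamma)$ about $0$ is positive, and since $f_n\to f$ uniformly on $\gamma$ the same holds for $f_n$ with $n$ large, forcing a zero of $f_n$ inside $\gamma$ --- a contradiction. The quasiconformal statement is likewise standard compactness: a $c$-uniform limit of $K$-quasiconformal maps is either constant or $K$-quasiconformal. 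This avoids the normalization and subsequence bookkeeping you anticipated; your version works too, it is just more labor than the word ``immediate'' suggests.
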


In this section we aim to prove the following analogue of Hurwitz's theorem in a  $\,p$-harmonic setting.
\begin{theorem}\label{p-Hurwitz}
If a sequence of $p$-harmonic orientation-preserving homeomorphisms $\varphi_n \colon \Omega \to \C$ converges $c$-uniformly to $\varphi \colon \Omega \to \C$, then either $\varphi$ is a $p$-harmonic homeomorphism (actually $\mathscr C^\infty$-diffeomorphism) or $J_\varphi (z) \equiv 0$ in $\Omega$.
\end{theorem}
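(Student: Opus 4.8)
The plan is to imitate the classical proof of Hurwitz's theorem by reducing, componentwise, to a scalar statement about $p$-harmonic functions and then extracting the dichotomy from a unique-continuation / strong-maximum-principle type argument for the Jacobian. Write $\varphi_n = u_n + i v_n$ and $\varphi = u + i v$. Since each $\varphi_n$ is $p$-harmonic in the sense of Definition~\ref{Def}, the coordinate functions $u_n, v_n$ satisfy the scalar $p$-harmonic equations \eqref{p-harm}. The $c$-uniform convergence $\varphi_n \to \varphi$, together with interior gradient estimates for the $p$-Laplacian (the $\mathscr C^{1,\alpha}_{\mathrm{loc}}$ theory), upgrades to $\mathscr C^1_{\mathrm{loc}}$ convergence $u_n \to u$, $v_n \to v$; in particular $u$ and $v$ are themselves $p$-harmonic and $J_{\varphi_n} \to J_\varphi$ locally uniformly, where $J_{\varphi_n} = \nabla u_n \times \nabla v_n > 0$ by orientation-preservation. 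Hence $J_\varphi \ge 0$ everywhere in $\Omega$.

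The heart of the matter is the zero–alternative: either $J_\varphi \equiv 0$, or $J_\varphi > 0$ throughout $\Omega$. This is where a genuinely $p$-harmonic input is needed, since $J_\varphi$ is not itself $p$-harmonic and no complex-analytic factorization is available. First I would localize: fix a small disk $B \Subset \Omega$ on which $\nabla u \ne 0$ (such a disk exists unless $u$ is constant, a degenerate case handled separately — if $u \equiv \mathrm{const}$ then $J_\varphi \equiv 0$ and we are done). On $B$ the equation $\Div(|\nabla u|^{p-2}\nabla u)=0$ is uniformly elliptic and smooth, so $u$ is real-analytic there and the level curves of $u$ form a smooth foliation. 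Differentiating the equation, each directional derivative $\partial_e u$ satisfies a linear elliptic equation with smooth coefficients, so unique continuation applies. The key observation is that $J_\varphi = \nabla u \times \nabla v$ can be written, along the foliation by level sets of $u$, as $|\nabla u|$ times the tangential derivative of $v$ along those level curves; and $v$, being $p$-harmonic, is a supersolution/subsolution in a suitable sense of a linear elliptic operator adapted to this foliation. I would then argue that $J_\varphi$ satisfies a linear elliptic differential inequality of the form $\mathcal L(J_\varphi) = (\text{smooth lower-order terms})\cdot J_\varphi$ on $B$, wherever $\nabla u \ne 0$; by the strong maximum principle a nonnegative solution vanishing at an interior point vanishes identically on $B$. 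Propagating this along a chain of overlapping disks covering the connected set where $\nabla u \ne 0$, and then using that $J_\varphi \equiv 0$ on the (closed) set where $\nabla u = 0$ anyway, gives $J_\varphi \equiv 0$ on all of $\Omega$ once it vanishes anywhere.

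Finally, in the alternative case $J_\varphi > 0$ everywhere, $\varphi$ is a local diffeomorphism; to conclude it is a global homeomorphism I would invoke the topological stability of injectivity under locally uniform limits of homeomorphisms on a surface, exactly as in the quasiregular case of Corollary~\ref{qrHur}: $\varphi$ is monotone (being a uniform limit of homeomorphisms, by Theorem~\ref{Kuratowski-Lacher}) and a local homeomorphism, hence a homeomorphism onto its image; smoothness of $\varphi$ then follows from Theorem~\ref{AS} (or directly from the $\mathscr C^{1,\alpha}$ regularity theory and a bootstrap, since $J_\varphi$ is bounded away from $0$ on compacta, making \eqref{p-harm} uniformly elliptic and smooth).

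I expect the main obstacle to be the second paragraph: producing the correct linear elliptic inequality satisfied by $J_\varphi$ in the region where $\nabla u \ne 0$, so that the strong maximum principle / unique continuation machinery applies. Unlike the holomorphic case there is no algebraic identity to lean on, and the argument must be carried out in the coordinates adapted to the (analytic) level-set foliation of $u$, tracking how the $p$-harmonicity of $v$ interacts with the geometry of that foliation. The degenerate branches ($u$ or $v$ constant, and the behavior across the zero set of $\nabla u$) also require a little care but are not serious.
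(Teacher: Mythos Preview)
Your outline is correct at the two ends --- the upgrade from $c$-uniform to $\mathscr C^1_{\loc}$ convergence via interior regularity, and the passage from ``$J_\varphi>0$ everywhere'' to ``global diffeomorphism'' via monotonicity of uniform limits --- and these match the paper. The trouble is the middle paragraph, which you yourself flag: you never produce the elliptic inequality $\mathcal L(J_\varphi)=(\text{l.o.t.})\cdot J_\varphi$, and there is no reason to expect one. The two scalar $p$-Laplace equations for $u$ and $v$ are completely decoupled, $J_\varphi=\nabla u\times\nabla v$ is a bilinear combination of their gradients, and the level-set foliation of $u$ carries no information about the $p$-harmonic equation for $v$. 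Your sentence ``$v$, being $p$-harmonic, is a supersolution/subsolution \ldots\ of a linear elliptic operator adapted to this foliation'' has no content as stated. So as written this is a genuine gap, not a routine detail.

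The paper avoids this entirely by a different mechanism: it exploits the fact (Bojarski--Iwaniec) that for a planar $p$-harmonic function $u$, the complex gradient $f=u_z$ is $K$-quasiregular, satisfying a first-order Beltrami-type equation. Thus $f^n=u^n_z$ and $g^n=v^n_z$ are quasiregular and nowhere zero (since $J_{\varphi_n}>0$), and by the quasiregular Hurwitz theorem their limits $f,g$ are each either identically zero or nowhere zero. Assuming both are nowhere zero, suppose $J_\varphi(z_0)=0$; then $\alpha f(z_0)+\beta g(z_0)=0$ for some real $\alpha,\beta\neq 0$. The key computation is that $F^n=\alpha f^n+\beta g^n$ satisfies an inhomogeneous Beltrami inequality $|F^n_{\bar z}|\le k|F^n_z|+A^n|F^n|$ with $k<1$ and $A^n\in\mathscr L^\infty_{\loc}$; after multiplying by an integrating factor (obtained by solving an auxiliary Beltrami equation) one gets genuine $K$-quasiregular functions $H^n=e^{\lambda^n}F^n$, nowhere vanishing because $J_{\varphi_n}>0$. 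Quasiregular Hurwitz then forces the limit $F=\alpha f+\beta g$ to be either nowhere zero or identically zero; since $F(z_0)=0$, we get $F\equiv 0$, hence $J_\varphi\equiv 0$. The point is that the dichotomy is obtained not for $J_\varphi$ directly but for the \emph{linear} combinations $\alpha f+\beta g$, and the approximating homeomorphisms $\varphi_n$ are used in an essential way (to guarantee $F^n\ne 0$), not just the limit $\varphi$.
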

\begin{proof}
For basic properties of quasiregular mappings in relation to $\,p\,$-harmonic functions  we refer the reader to~\cite{BI} and for further reading to~\cite{DI}.
 Recall from~\cite{IM} that every $p$-harmonic function $u\colon  \Omega \to \R$ actually belongs to $\mathscr C^{k, \alpha}_{\loc} (\Omega)$, where the largest integer $k \ge 1$ and the H\"older exponent $\alpha \in (0,1]$ are determined through  the equation
\[k+\alpha = \frac{7p-6+\sqrt{p^2+12p-12}}{6p-6}\,  .\]
Thus, regardless of the exponent $p$, we have $u \in \mathscr C^{1, \alpha}_{\loc}(\Omega)$
with $\,\alpha >1/3\,$. The complex gradient
\begin{equation}\label{heq1}
f:= u_z = \frac{1}{2} \left(u_x-iu_y \right), \qquad z=x+iy
\end{equation}
 is a $K$-quasiregular mapping
 \[K\le \begin{cases} p-1 \qquad & \textnormal{if } p \ge 2 \\
(p-1)^{-1} &  \textnormal{if } 1 < p \le 2 .
  \end{cases}\]
 Actually, $\,f\,$  satisfies the quasilinear elliptic equation
 \begin{equation}\label{heq2}
 \frac{\partial f}{\partial \bar z} = \frac{2-p}{2p} \left[ \frac{\overline{f}}{f}\,  \frac{\partial f}{\partial z} + \frac{f}{\overline{f}}\,  \frac{\overline{\partial f}}{\partial z}  \right]
 \end{equation}
 In particular, every sequence $\{u^n\}^\infty_{n=1}$ of $p$-harmonic functions converging $c$-uniformly on $\Omega$, actually converges in $\mathscr C^{k, \alpha}_{\loc} (\Omega)\,$. Then, by Corollary \ref{qrHur} , if the complex gradients $f^n=u_z^n$ are nowhere vanishing in $\Omega$ then $f=u_z$ is either identically zero or is also a nowhere vanishing function. Moreover, $f^n \to f\,$ uniformly on compact subsets together with all derivatives. Now consider the $p$-harmonic homeomorphisms
\[\varphi_n:= u^n +i\, v^n \colon \Omega \to \C\]
 and their limit
\[\varphi := u +i\, v \qquad \varphi \colon \Omega \to \C.\]
Theorem~\ref{AS} tells us
 that $\varphi_n$ are diffeomorphisms of positive Jacobian determinant, $\,J_{\varphi_n}(z)=u_x^nv_y^n -u_y^nv_x^n > 0\;$. Therefore, the complex gradients,
 \[f^n (z):= u^n_z \quad \textnormal{ and } \quad g^n (z):= v^n_z  \]
 do not vanish in $\Omega\,$. In view of Corollary \ref{qrHur} the limit functions $\,f(z)=u_z \,$ and $\,g(z)=v_z \,$ do not vanish as well, unless they are identically equal to zero. But in this latter case we would have $J_\varphi = u_xv_y-u_yv_x \equiv 0$, proving  Theorem~\ref{p-Hurwitz}. Therefore, let us  assume that neither $\,f \neq 0\,$ nor $\,g\neq 0\,$, everywhere. We aim now to show that  also $\,J_\varphi (z) \not =0$ everywhere in $\Omega$. Let us notice in advance that once the inequality $J_\varphi (z) \not = 0$ is established, the map $\varphi$ will be a local diffeomorphism. Even more, since this map is a $c$-uniform limit of homeomorphisms, it must be a global homeomorphism,  by elementary topological considerations. \\
 Striving for a contradiction, assume that $J_\varphi (z_\circ)=0$ at some point $z_\circ \in \Omega$, or equivalently,
 \[\alpha f(z_\circ)+ \beta g(z_\circ)=0 \quad \textnormal{ for some real numbers } \alpha, \beta \not =0.\]
 We consider the complex functions
 \[F^n (z)= \alpha f^n (z)\;+ \;\beta\, g^n (z)\]
 and their limit
 \[F (z)= \alpha f (z)\;+ \;\beta\, g (z)\, ,  \qquad F(z_\circ) = 0 \, .\]
 Given any real coefficients $\alpha , \beta \not = 0$ we shall construct elliptic first order system of partial differential equations for $F^n$. These equations will be of aid in determining that $F(z) \not =0$ in $\Omega$, contradicting the equality $F(z_0)=0$. The derivation of the equation goes as follows.

 Using~\eqref{heq2} and the analogous equation for $g$ we compute
 \[
 \begin{split}
 F^n_{\bar z} &= \frac{2-p}{2p} \left[\frac{\overline{f^n}}{f^n} \frac{\partial f^n}{\partial z} + \frac{f^n}{\overline{f^n}} \frac{\overline{\partial f^n}}{\partial z}  \right]\, \alpha+ \frac{2-p}{2p} \left[\frac{\overline{g^n}}{g^n} \frac{\partial g^n}{\partial z} + \frac{g^n}{\overline{g^n}} \frac{\overline{\partial g^n}}{\partial z}  \right]\, \beta\\
 &=  \frac{2-p}{2p} \left[\frac{\overline{g^n}}{g^n} \frac{\partial F^n}{\partial z} + \frac{g^n}{\overline{g^n}} \frac{\overline{\partial F^n}}{\partial z}  \right] \\ & \; + \frac{2-p}{2p} \left[\left(\frac{\overline{f^n}}{f^n}-\frac{\overline{g^n}}{g^n}\right) \frac{\partial f^n}{\partial z} + \left(\frac{f^n}{\overline{f^n}} - \frac{g^n}{\overline{g^n}}\right) \frac{\overline{\partial f^n}}{\partial z}  \right]\, \alpha
 \end{split}
 \]
 Here we have
 \[\frac{\overline{f^n}}{f^n}-\frac{\overline{g^n}}{g^n} = \frac{(\alpha f^n + \beta g^n)\overline{f^n} - (\alpha \overline{f^n} + \beta \overline{g^n}){f^n}  }{\beta\, f^n g^n}\]
 and hence
 \[\left|\frac{\overline{f^n}}{f^n} - \frac{\overline{g^n}}{g^n}  \right| \le \frac{2\, \abs{F^n}}{\abs{\beta} \abs{g^n}} \, . \]
 Thus we have the following first order (elliptic) inequality
 \[\left| \frac{\partial F^n}{\partial \bar z} \right| \le \left|1-\frac{2}{p}\right| \, \left| \frac{\partial F^n}{\partial z}\right| \;\;+\;\; \left|1-\frac{2}{p}\right| \frac{2
 \abs{\alpha}}{\abs{\beta} \abs{g^n}} \,  \abs{F^n} .\]
 This inequality can be viewed, equivalently, as a linear equation
 \begin{equation}\label{heq3}
 \frac{\partial F^n}{\partial \bar z} = \mu^n (z)  \frac{\partial F^n}{\partial  z} + A^n (z) F^n \, , \quad \textnormal{ a.e. in } \Omega \, .
 \end{equation}
 with complex measurable coefficients satisfying:
 \[
 \begin{split}
 \abs{\mu^n (z)} & \le k := \left| 1-\frac{2}{p} \right|\; <\;1\\
 \abs{A^n (z)} &\le 2k \left| \frac{\alpha}{\beta}\right|\, \left|\frac{1}{g^n}\right|\, \left|\frac{\partial f^n}{\partial z}  \right|\in \mathscr L^\infty_{\loc} (\Omega).
 \end{split}
 \]
 To see this latter inclusion we fix a compactly contained subdomain $\,G \Subset \Omega$. Recall that the functions  $\,g^n(z)\,$   and their c-uniform limit $\,g \,$ do not vanish. Thus we have a uniform bound from below
 $\abs{g^n (z)}\ge m \;$, for $\,z \in G\,$, $\,m\,$ being independent of $\,n = 1, 2, ... \,$. On the other hand the continuous functions $\, \frac{\partial f^n}{\partial z} (z) \;$ converge uniformly on $\,G\,$, so we also have the uniform bound $\, \left|\frac{\partial f^n}{\partial z} (z)\right| \le M\,$. This yields $\,\abs{A^n (z)} \le \frac{2k M |\alpha|}{m \,|\beta|} \,$ in $\,G\,$, as desired.

  Next we solve (uniquely) the nonhomogeneous Belrami equation
 \begin{equation}\label{heq4}
 \lambda^n_{\bar z} = \left[\,\mu^n (z) \lambda_z^n -A^n (z)\,\right] \chi_{_G}(z) \quad \textnormal{for a complex function }\lambda \in \mathscr W^{1,s} (\R^2)
 \end{equation}
 where $s>2$. We search the unknown function $\lambda^n (z)$ in the form of the Cauchy transform
of a complex density function  $\omega^n \in \mathscr L^s (\R^2)$, with $\,\supp \omega^n \subset G\,$, namely
\[\lambda^n (z)=  \frac{1}{\pi} \iint_{\C} \frac{\omega^n (\xi) \, \dtext \xi}{z-\xi} \, .\]
The density  function $\omega^n$  will be found (uniquely) by solving a singular integral equation
 \[\omega^n = \mu \, \chi_{_G} \, \mathcal S \omega^n -A^n \, \chi_{_G}\,, \qquad  \mathcal S \omega^n = - \frac{1}{\pi} \iint_{\C} \frac{\omega^n (\xi)}{(z-\xi)^2}\, \dtext \xi \, . \]
 Here   $ \mathcal S \colon \mathscr L^s (\C) \to \mathscr L^s (\C) $ is the familiar Beurling-Ahlfors operator. Thus
  \[\omega^n = - \left(I-\mu\,  \chi_{_G}\,  \mathcal S\right)^{-1}A^n \chi_{_G}\,\]
 We note that the operator \[ I-\mu\,  \chi_{_G}\,  \mathcal S \;\,\colon \mathscr L^s (\C) \to \mathscr L^s (\C)\]
is invertible for all  $\,2<s< 1+ \frac{1}{k}\,$, see~\cite{AIS}. We see that $\,\sup_n \norm{\omega^n}_{\mathscr L^s (\C)} < \infty$ and infer that the family $\{\lambda^n\}_{n=1}^\infty$ is equicontinuous. Indeed, we have a uniform H\"older  estimate,
\[\abs{\lambda^n (z_1) - \lambda^n (z_2)} \le C_s \, \abs{z_1-z_2}^{1-\frac{2}{s}} \norm{\omega^n}_{\mathscr L^s (\C)} . \]
We may, and do assume, by passing to a subsequence if necessary, that $\lambda^n (z)  \rightrightarrows \lambda (z)$ uniformly in $\C$. Therefore, the functions
\[e^{\lambda^n (z)} F^n (z) \rightrightarrows e^{\lambda (z)} F(z) \qquad \textnormal{uniformly on } G. \]
On the other hand the nowhere vanishing functions $H^n (z)=e^{\lambda^n} F^n$ satisfy a homogeneous Beltrami equation
\[H^n_{\bar z} = \mu (z) H^n_z,\;\quad\;\;\textnormal{in}\;\;G\]
which is straightforward from~\eqref{heq3} and~\eqref{heq4}.  In other words $H^n$ are $K$-quasiregular mappings with  $K= \frac{1+k}{1-k}$. Using Corollary~\ref{qrHur} we see that the limit function $\,H(z)=e^{\lambda (z)} F(z)\,$ does not vanish anywhere in $\,G\,$, unless $\,F\equiv 0\,$. This later case would mean that $\,F(z)=\alpha \, f(z) + \beta\, g(z) \equiv 0\,$ so, $J_\varphi (z)\equiv 0$, which we have already ruled out.
\end{proof}

\subsection{$p$-harmonic replacements}\label{secphar}
Here $\,\mathbb X\,$ and $\,\mathbb Y\,$ are circular domains.
Let $h\in \widetilde{\Ho}_p (\X, \Y)$, $p \ge 2$, and $\mathbb Q$ be an open square in $\Y_+$ such that $\mathbb Q \cap \Y$ is convex. Then the set
\[U=h^{-1} (\mathbb Q \cap \overline{\Y})=\{x \in \X \colon h(x) \in \mathbb Q \cap \overline{\Y}\}\]
is a simply connected domain in $\X$. We call it a {\it cell}. More specifically, $U$ will be called an {\it inner cell} if $\mathbb Q \subset \Y$ and a {\it boundary cell} if $\mathbb Q \cap \partial \Y \not= \varnothing\,$. Note that  $\mathbb Q$  can intersect only the outer boundary circle of $\Y$, since otherwise $\mathbb Q \cap \Y$ would not be convex.

\begin{proposition}\label{propha}
Let $h\in \widetilde{\Ho}_p (\X, \Y)$ and $U \subset \X$ be a cell. Then there exists $h^\ast \colon \overline{\X} \onto \overline{\Y}\,$, $h^\ast=h \colon \partial \X \onto \partial \Y$, such that	
\begin{enumerate}[(i)]
\item $h^\ast \in \widetilde{\Ho}_p (\X, \Y)$
\item $h^\ast = h \colon \overline{\X}\,  \setminus \, U \onto \overline{\Y} \setminus (\mathbb Q \cap \Y)$
\item $h^\ast \colon  U \onto \mathbb Q \cap \Y$ is a $p$-harmonic diffeomorphism
\end{enumerate}
Moreover,
\begin{enumerate}
\item[(iv)] $ \displaystyle \iint_{\X} \abs{\nabla h^\ast}^p \le \iint_{\X} \abs{\nabla h}^p$
\end{enumerate}
\end{proposition}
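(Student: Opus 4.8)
The plan is to replace $h$ on the cell $U$ by the $p$-harmonic map with the same boundary values as $h\colon\partial U\to\partial(\mathbb Q\cap\Y)$, and then verify that the result stays inside $\widetilde{\Ho}_p(\X,\Y)$, i.e. is still a weak limit of homeomorphisms. First I would set up the geometry: since $h\in\widetilde{\Ho}_p(\X,\Y)$, by Lemma~\ref{lem211} and the discussion following Lemma~\ref{lem212} the extension $h\colon\overline\X\onto\overline\Y$ is continuous and monotone, so the cell $U=h^{-1}(\mathbb Q\cap\overline\Y)$ is a simply connected domain (as in \S\ref{secsqu}, using unicoherence of $\widehat{\C}$ for the preimage of the square $\mathbb Q$; the intersection with $\overline\Y$ only removes the part across the outer boundary circle, which by the cutting argument of \S\ref{secspl} again leaves a simply connected piece). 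The target $G:=\mathbb Q\cap\Y$ is convex by hypothesis, and $h$ maps $\partial U$ homeomorphically onto $\partial G$ — this needs a short argument from monotonicity of the boundary map and the fact that $\partial(\mathbb Q\cap\overline\Y)$ is a Jordan curve. Now invoke Theorem~\ref{winlem} on $U$ (a bounded simply connected domain) applied to $u=\re h$ and $v=\im h$ separately: this produces the unique $p$-harmonic $h^\ast$ on $U$ with $h^\ast\in h+\W^{1,p}_\circ(U)$ and $h^\ast=h$ on $\partial U$, with the energy inequality $\iint_U|\nabla h^\ast|^p\le\iint_U|\nabla h|^p$ for each coordinate; define $h^\ast:=h$ on $\overline\X\setminus U$. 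Since $h^\ast=h$ on $\partial U$, the glued map is continuous on $\overline\X$ and lies in $\W^{1,p}(\X,\C)$, and (iv) is immediate by adding the coordinatewise inequalities over $U$ and noting equality off $U$. Claim (ii) is by construction, and (iii) follows from Theorem~\ref{AS}: $h^\ast$ is $p$-harmonic on $U$, continuous up to $\partial U$, and takes $\partial U$ homeomorphically onto the boundary of the convex domain $G$, hence is a $\mathscr C^\infty$-diffeomorphism of $U$ onto $G$.

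The remaining and genuinely hard point is (i): that $h^\ast\in\widetilde{\Ho}_p(\X,\Y)$, i.e. that $h^\ast$ is itself a weak $\W^{1,p}$-limit of homeomorphisms $\X\onto\Y$. Here I would use the weakly converging sequence $h_j\to h$ that comes with $h\in\widetilde{\Ho}_p(\X,\Y)$. By Lemma~\ref{lem212} the $h_j$ converge uniformly on $\overline\X$; extending to $\widehat h_j\colon\widehat\C\onto\widehat\C$ as in \S\ref{secaux} and using the square-preimage machinery of \S\ref{secsqu} (equation~\eqref{eq52}), for large $j$ the cells $U_j:=h_j^{-1}(\mathbb Q\cap\overline\Y)$ approximate $U$ from inside and outside, with $\overline U_{n-1}\subset U_j^{(n)}\subset\overline{U_j^{(n)}}\subset U_{n+1}$ for the exhausting subcells $U_n=h^{-1}(\lambda_n\mathbb Q\cap\overline\Y)$. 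The construction of $h_j^\ast$ is then: on $U_j$ replace $h_j$ by a $p$-harmonic diffeomorphism onto $\mathbb Q\cap\Y$ having the same boundary values as $h_j$ on $\partial U_j$ (Theorem~\ref{winlem} gives existence and Theorem~\ref{AS} gives the diffeomorphism property, since $\mathbb Q\cap\Y$ is convex and $h_j|_{\partial U_j}$ is a homeomorphism onto its boundary), leaving $h_j$ unchanged elsewhere; this $h_j^\ast$ is a homeomorphism $\X\onto\Y$ by a gluing-of-homeomorphisms argument. One then shows $h_j^\ast\rightharpoonup h^\ast$ weakly in $\W^{1,p}(\X,\Y)$: off the cell this is just $h_j\rightharpoonup h$, while on the cell one needs continuity of the $p$-harmonic replacement operator with respect to (weak convergence of) boundary data on a varying family of domains $U_j\to U$. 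This is exactly the place where the $p$-harmonic Hurwitz theorem, Theorem~\ref{p-Hurwitz}, enters: the $p$-harmonic replacements $h_j^\ast|_{U_j}$, transplanted to a fixed reference domain, form a sequence of $p$-harmonic orientation-preserving homeomorphisms converging locally uniformly, hence (Theorem~\ref{p-Hurwitz}) the limit is either a $p$-harmonic diffeomorphism or has vanishing Jacobian; uniqueness in Theorem~\ref{winlem} together with uniform convergence of boundary data identifies the limit as $h^\ast|_U$ and rules out the degenerate alternative. The energy bound $\iint_{U_j}|\nabla h_j^\ast|^p\le\iint_{U_j}|\nabla h_j|^p$ keeps $\{h_j^\ast\}$ bounded in $\W^{1,p}$, so the weak limit exists along a subsequence and equals $h^\ast$, giving (i).

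The main obstacle, as indicated, is controlling the $p$-harmonic replacement over the \emph{varying} cells $U_j$ — both the topological bookkeeping (ensuring $\partial U_j$ is a Jordan curve, that $h_j$ restricted to it is a homeomorphism onto $\partial(\mathbb Q\cap\overline\Y)$, and that the cut-and-glue produces genuine homeomorphisms of $\X$ onto $\Y$), and the analytic passage to the limit, which cannot be done by naive energy comparison and forces the use of Theorem~\ref{p-Hurwitz}. The case $p=2$ is lighter, since there one has the classical Rad\'o--Kneser--Choquet theorem and the classical Hurwitz theorem (Theorems~\ref{huthm1}--\ref{huthm2}, Corollary~\ref{qrHur}) in place of Theorems~\ref{AS} and~\ref{p-Hurwitz}; the general $p\ge2$ case is what the $p$-harmonic Hurwitz theorem was built for. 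One should also be careful that for a boundary cell the relevant boundary arc $\beta\subset\x_i$ and the cutting construction of \S\ref{secspl} are used to see $U$ is simply connected; this is the only subtlety distinguishing inner cells from boundary cells, and (iv) together with (ii)–(iii) are insensitive to it.
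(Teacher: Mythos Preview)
Your overall strategy contains the right ingredients, but there is a genuine gap at the very first step, and it forces a different order of the argument.

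You claim that $h$ takes $\partial U$ \emph{homeomorphically} onto $\partial(\mathbb Q\cap\Y)$, and then invoke Theorem~\ref{AS} directly to conclude that the $p$-harmonic replacement $h^\ast|_U$ is a diffeomorphism. This claim is false in general. The map $h\in\widetilde{\Ho}_p(\X,\Y)$ is only \emph{monotone} on $\overline\X$, and monotone maps can collapse nondegenerate arcs of $\partial U$ to single points of $\partial(\mathbb Q\cap\Y)$; no ``short argument from monotonicity'' will upgrade this to injectivity. Worse, $\partial U$ itself need not be a Jordan curve: the paper explicitly warns (\S\ref{secsqu}) that $\widehat h^{-1}(\partial\mathbb Q)$ can be strictly larger than $\partial\widehat h^{-1}(\mathbb Q)$. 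So neither the domain nor the boundary data meet the hypotheses of Theorem~\ref{AS}, and your direct proof of (iii) collapses. The same objection applies to your approximating construction for boundary cells: the sets $U_j=h_j^{-1}(\mathbb Q\cap\overline\Y)$ meet $\partial\X$, and there the continuous extension of $h_j$ is again only monotone, so $h_j|_{\partial U_j}$ is not a homeomorphism and Theorem~\ref{AS} is not available on $U_j$ either.

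The paper's proof avoids this trap by reversing the order: it never attempts to apply Theorem~\ref{AS} to $h$ itself. Instead it works with the genuine homeomorphisms $h_{j_n}$ on \emph{compactly contained} Jordan subdomains $\Omega_n^{j_n,\delta_n}\Subset U$, obtained from the exhausting squares $\mathbb Q_n=\lambda_n\mathbb Q$ and, in the boundary-cell case, a further shearing $\mathbb Q_n\cap\Y^{\delta_n}$ that pulls the target strictly inside $\Y$. On these domains $h_{j_n}$ \emph{is} a homeomorphism onto a convex set, so Theorem~\ref{AS} legitimately gives a $p$-harmonic diffeomorphic replacement $\widetilde h_{j_n}$. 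The map $h^\ast$ is then \emph{defined} as the weak limit of the $\widetilde h_{j_n}$, which makes (i) automatic. Property (iii) is obtained only afterwards: $h^\ast$ is $p$-harmonic on each $\Omega_{n-1}$ (as a $c$-uniform limit of $p$-harmonic maps), hence on all of $U$; Theorem~\ref{p-Hurwitz} then says $h^\ast|_{U'}$ is either a diffeomorphism or has identically vanishing Jacobian on any $U'\Subset U$, and the degenerate alternative is excluded by the Jacobian computation via~\eqref{averJac}. In short: you must build $h^\ast$ as a limit of replacements of the $h_j$ on shrunken cells, not as a replacement of $h$ on $U$ followed by an approximation argument.
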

\begin{proof}
We begin with a general consideration,  without making explicit distinction between the two types of cells. Subsequently, additional details will be required for the case of a boundary cell. But at the end the proof will go in the same way for both cases. Recall that $\,h\,$ is a weak limit of homeomorphisms $\,h_j \in \Ho_p (\X, \Y)\,$.

We conveniently extend each $h_j$ and $h$, as in \S\ref{secext}, to obtain the monotone mappings $\widehat{h}_j \colon \widehat{\C} \onto \widehat{\C}$ and their limit $\,\widehat{h} \colon \widehat{\C} \onto \widehat{\C}\,$.  Note that $\{\widehat{h}_j\}^\infty_{j=1}$ are homeomorphisms in $\,\X\,$ as well as in  the reflected domains $\,\X_i\,$, $i=1,2, \dots , n\,$, but not necessarily  along the boundary circles, so the mappings
\[\widehat{h}_j \;\colon \;\X_+=\overline{\X} \cup \X_1 \cup \dots \cup \X_n \onto \overline{\Y} \cup \Y_1 \cup \dots \cup \Y_n =\Y_+\]
are only continuous and monotone. Let $\x_i$ and $ \Upsilon_i \, $, for some $i=1, \dots , n$, be the outer components of $\partial \X$ and $\partial Y$, respectively. Thus $\mathbb Q \subset \Y \cup {\Y}_i \cup \Upsilon_i$, which may or may not intersect $\Upsilon_i \, $. Recall from~\S\ref{secsqu} a sequence of open squares $\mathbb Q_n = \lambda_n \mathbb Q$, where  $0< \lambda_1 < \lambda_2 < \dots \to 1$, and the cells
\[\Omega_n = \widehat{h}^{-1}(\mathbb Q_n) \subset \X \cup \X_i \cup \x_i \, , \qquad \Omega_1 \Subset \Omega_2 \Subset \dots \Subset \dots \Omega=\widehat{h}^{-1}(\mathbb Q)\, .\]
Also recall the continua $C_n^j= \widehat{h}_j^{-1} (\partial \mathbb Q_n)$, the bounded components $\Omega_n^j=\widehat{h}_j^{-1}(\mathbb Q_n)$ of $\widehat{\C} \setminus C_n^j\,$ and the inclusions
\[\overline{\Omega}_{n-1} \subset \Omega_n^j \subset \overline{\Omega}_n^j \subset \Omega_{n+1}\, , \qquad n \ge 2 \textnormal{ for } j \ge j_n \, . \]
In case of the inner cell $U=h^{-1} (\mathbb Q)$, the square $\mathbb Q$ lies entirely in $\Y$, so the continua $C_n^j$ are  Jordan curves, because $h_j=\widehat{h}_j \colon \Omega_n^j \onto  \mathbb Q_n$ are homeomorphisms. Let us fix $j$, say $j=j_n$, and introduce a sequence of homeomorphisms $h_{j_n} \colon \Omega_n^{j_n} \onto \mathbb Q_n$, $n=2,3, \dots$ We shall return to these homeomorphisms after thorough discussion of a similar construction for the boundary cells. Let
\[U=h^{-1} (\mathbb Q \cap \overline{\Y})=\{x \in \X \colon h(x) \in \mathbb Q \cap \overline{\Y}\} = \Omega \cap \X \, .   \]
As observed in \S\ref{secspl} this simply connected domain has resulted by cutting $\Omega=\widehat{h}^{-1} (\mathbb Q)$ along an arc in the boundary circle $\x = \x_i$. This arc splits each of the cells $\Omega_1, \Omega_2, \dots\;$  into the pair of two simply connected subdomains; the one which lies in $\X$ will remain of interest to us. We denote it by $U^n = \Omega_n \cap \X$. Clearly, $\bigcup_{n=1}^\infty U^n =U$. On the other hand, for  $\,j \ge j_n\,$ we have simply connected domains $\Omega_n^j = \widehat{h}_j^{-1} (\mathbb Q_n)$ which are the inner complements of the continua $C_n^j = \widehat{h}_j^{-1} (\partial \mathbb Q_n) \subset \Omega_{n+1}$. But we will be interested only in a  portion of $\Omega_n^j\,$ that is compactly contained in $\X$.  To this effect, we shear  the set $\,\mathbb Q \cap \overline{\Y}\,$  along the circular part of its boundary. Precisely, for $\delta >0$ sufficiently small we consider the region $\mathbb Q_n \cap \Y^\delta \subset \mathbb Q_n \cap \Y$, where $\Y^\delta = \{y \in \Y \colon \dist (y, \partial \Y)> \delta\}$. The portion of $\Omega_n^j$ that we are going to consider is a Jordan domain $\Omega_n^{j, \,\delta}=h_j^{-1} (\mathbb Q_n \cap \Y^\delta) \subset \X\,$ with $j = j_n$. Here  $\,\delta = \delta_n >0$ is chosen as follows. For $\delta=0$ we have
\[\Omega_n^{j,\,0}=h_j^{-1}(\mathbb Q_n \cap \Y)= \Omega_n^j \cap \X \,\;,\;\;\;\; j = j_n . \]
Since $h_{j} \colon \X \onto \Y\,$ is a homeomorphism, there is $\delta = \delta_n>0$ such that the set  $\Omega_n^{j, \,\delta}= h_j^{-1}(\mathbb Q_n \cap \Y^\delta)$  contains $\Omega_n^j \cap \X^{\epsilon_n}$, where  $\,\X^{\epsilon_n}=  \{x \in \X \colon \dist (x, \partial \X)> \epsilon _n \}$, say  $\,\varepsilon_n =\frac{1}{n}\,$. Let us write it as
\[ \Omega_n^{j, \,\delta} =h_j^{-1}(\mathbb Q_n \cap \Y^\delta)\supset \overline{\Omega}_{n-1} \cap \X^{\epsilon_n} \, , \quad \textnormal{ where } j=j_n\, , \; \delta = \delta_n \textnormal{ and } \epsilon_n = \frac{1}{n}\, .\]
Thus we have a homeomorphism
\begin{equation}\label{eq98}
h_{j_n} \colon {\Omega}_n^{j_n, \,\delta_n} \onto \mathbb Q_n \cap \Y^{\delta_n}
\end{equation}
of a Jordan domain ${\Omega}_n^{j_n,\,\delta_n}$ onto the convex domain $\mathbb Q_n \cap \Y^{\delta_n}$.\\
From now on the remaining part of the proof treats both types of cells in the same way. Simply, in case of the inner cells we have ${\Omega}_n^{j_n, \, \delta_n}=\Omega_n^{j_n}$ and $\mathbb Q_n \cap \Y^{\delta_n}= \mathbb Q_n$.  We  now appeal to Theorem~\ref{AS}. Accordingly,
each homeomorphism in~\eqref{eq98}
can be replaced by a $p$-harmonic diffeomorphism, denoted by
\[\widetilde{h}_{j_n}= \begin{cases} h_{j_n} \colon \X \setminus {\Omega}_n^{j_n, \,\delta_n} \to \Y \setminus (\mathbb Q_n \cap \Y^{\delta_n}) \\
\textnormal{$p$-harmonic diffeomorphism of } {\Omega}_n^{j_n,\, \delta_n} \onto \mathbb Q_n \cap \Y^{\delta_n} . \end{cases}\]
By Theorem \ref{winlem} we still have $\widetilde{h}_{j_n} \in \Ho_p (\X, \Y)\,$ and
\[\mathcal E_\X [\widetilde{h}_{j_n}] \le \mathcal E_\X [{h}_{j_n}] \, , \qquad \textnormal{for } j_1<j_2 < \dots \to \infty \, .\]
 Now the desired  mapping $h^\ast \in \widetilde{\Ho}_p (\X, \Y) \,$ is defined to be the weak limit of $\widetilde{h}_{j_n}$.  Recall that  homeomorphisms in $\widetilde{\Ho}_p (\X, \Y) $ admit a continuous extension to the closure of $\,\X\,$, with values in  $\,\overline{\Y}\,$.  Furthermore, $\,h_{j_n}\,$ converges uniformly to $\,h^\ast\,$ on $\,\overline{\X}\,$, because of a uniform bound of energy and Lemma \ref{lem212}. For $x \in \overline{\X} \setminus U$ we have $\widetilde{h}_{j_n} = h_{j_n} \to h(x)$, so $h^\ast = h$ on $\overline{\X} \setminus U$. Moreover, $h^\ast$ is a $p$-harmonic mapping on $U=\Omega \cap \X$. By Theorem~\ref{winlem} we see that $h^\ast \in h + \W^{1,p}_\circ (U)$ and
\[\mathcal E_\X [h^\ast] \le \mathcal E_\X [h].\]
It remains to show that $h^\ast \colon U \cap \X \onto  \mathbb Q \cap \Y$ is a diffeomorphism. To this effect, let $U'$ be any  compactly contained subdomain of $U$. We note that $U' \subset \Omega_n^{j_n, \delta_n}$  for sufficiently large $n$. By Hurwitz type Theorem~\ref{huthm1} $h^\ast \colon U' \to \C$ is either a diffeomorphism or its Jacobian determinant vanishes identically on $U'$. However, this latter case is easily ruled out by computing the integral of the Jacobian over $U$.  For this purpose we appeal to the equation~\eqref{averJac},
\[
\begin{split}
\int_{U} J(x, h^\ast)\, \dtext x  & = \lim\limits_{n \to \infty}  \int_{U} J(x, \widetilde{h}_{j_n})\, \dtext x  \ge \lim\limits_{n \to \infty}  \int_{{\Omega}^{j_n, \delta_n}_n}  J(x, \widetilde{h}_{j_n})\, \dtext x \\ &=   \lim\limits_{n \to \infty} \abs{\mathbb Q_n \cap \Y^{\delta_n}} = \abs{\mathbb Q \cap \Y}>0\,.
\end{split}
\]
In particular, $\int_{U'} J(x, {h^\ast}) \, \dtext x>0$ for sufficiently large subdomains $U' \Subset U$. In conclusion, $h^\ast \colon U' \to \C$ is a diffeomorphism. Letting $U'$ approach  $U$ we conclude that $h^\ast \colon U \to \C$ is a diffeomorphism. Recall that homeomorphisms $\widetilde{h}_{j_n}$ take $\Omega_n^{j_n , \delta_n} \subset U$ onto $\mathbb Q_n \cap \Y^{\delta_n} \subset \mathbb Q_n \cap \Y$ and converge uniformly on $U$ to $h^\ast$. Hence, by elementary topological arguments we see that $h^\ast \colon U \onto \mathbb Q \cap \Y$ is a diffeomorphism.

\subsection{Further replacements} A key step in constructing homeomorphisms $\, h_j^*  : \mathbb X \onto \mathbb Y \,$ for Theorem \ref{thmain} is  the following
\begin{proposition}\label{keylem}
Let $\X$ and $\Y$ be circular domains bounded by circles $\x_1 , \dots , \x_\ell$ and $\Upsilon_1, \dots , \Upsilon_\ell$, respectively, and $h \in \widetilde{\Ho}_p (\X, \Y)$. Select a pair of boundary circles $\x=\x_i$, $\Upsilon = \Upsilon_i\,$ and recall that $\,h \colon \x_i \onto \Upsilon_i\,$.  Consider the open region in $\X$
\[\Omega = h^{-1} (\Y \cup \Upsilon) = \{x \in \X \colon h(x) \in \Y \cup \Upsilon\}\, .\]
Then for every $\epsilon >0$ there exists $\,H \colon \X \to \overline{\Y}\setminus\Upsilon\,$\,in\, $\,\widetilde{\Ho}_p (\X, \Y)\,$ which satisfies the following conditions:
\begin{enumerate}[(a)]
\item\label{con1} $H=h \colon \partial \X \onto \partial \Y\,$, also $H=h \colon \X \setminus \Omega \to \partial \Y \setminus \Upsilon$
\item\label{con2} $H \colon \Omega \onto \Y$ is a homeomorphism
\item\label{con3} $\norm{H-h}_{\mathscr R^p (\X)} \le \epsilon\;$
\end{enumerate}
\end{proposition}

Of course $H$ depends on $\epsilon$. However, to ease the writing, we suppress the explicit dependence on $\,\epsilon\,$ in the notation of $\,H\,$ but return to it later on.

\begin{proof}
We shall make use of  Proposition~\ref{propha}, which requires that $\,\Upsilon\,$ be the outer boundary of $\,\Y\,$. Let us first demonstrate how Proposition~\ref{keylem} can be reduced to this case.

{\bf Step 1} {\it (Reduction to the case of outer boundary)}  Given a pair $\,(\mathfrak X_i , \Upsilon_i)\,$ of boundary components, we recall that $\;h \colon \x_i \onto \Upsilon_i\;$. We explore the reflections  $\;\varphi_i \colon \X \onto \X_i\;$ and $\;\psi_i \colon \Y \onto \Y_i\;$ from \S\ref{secext}. In this way $\,\x_i\,$ and $\,\Upsilon_i\,$ become outer boundaries of the circular domains $\,\X_i\,$ and $\,\Y_i\,$, respectively. Consider the mapping
\[h^i = \psi_i \circ h \circ \varphi_i \colon \X_i \to \Y_i \, , \qquad h^i \in \widetilde{\Ho}_p (\X_i, \Y_i)\, .\]
Assuming that Proposition~\ref{keylem} holds in case of outer boundaries we infer that given any $\epsilon^i >0$ (to be revealed later on) there exists $\,H^i \colon \X_i \to \overline{\Y}_i \setminus \Upsilon_i$, $\,H^i \in \widetilde{\Ho}_p (\X_i, \Y_i)$ such that
\begin{itemize}
\item  $H^i=h^i \colon \overline{\X}_i \setminus \Omega' \to \partial \Y_i \setminus \Upsilon_i\;$, where $\Omega^i = \{x \in \X_i \colon h^i(x) \in \Y_i \cup \Upsilon_i\}$
\item $H^i \colon \Omega^i \onto \Y_i$ is a homeomorphism
\item $\norm{H^i-h^i}_{\mathscr R^p (\X_i)} \le \epsilon^i$
\end{itemize}
We then find the desired mapping $H\in \widetilde{\Ho}_p (\X, \Y)$ by setting
\[H= \psi_i \circ H^i \circ \varphi_i \colon \X \to \Y\]
 Indeed, the conditions~\eqref{con1} and~\eqref{con2} are obviously satisfied. Regarding the condition~\eqref{con3} we note that $\psi_i$ and $\varphi_i$ are $\mathscr C^\infty$-smooth diffeomorphism so
\[\norm{H-h}_{\mathscr R^p (\X)} = \norm{\psi_i \circ H^i \circ \varphi_i -\psi_i \circ h^i \circ \varphi_i }_{\mathscr R^p (\X)} \le C \, \norm{H^i-h^i}_{\mathscr R^p (\X_i)} \le C \, \epsilon^i = \epsilon  \]
where $C$ depends only on the Lipschitz constants of the reflections; thus we take  $\,\epsilon^i\,$  equal to $\,\epsilon /C$. The interested reader may wish to observe that this transition (by reflecting about circles) retains control over $\,\norm{H-h}_{\mathscr R^p (\X)}$, but  at the sacrifice of loosing the sharp inequality $\,\norm{H}_{\mathscr R^p (\X)}\,\leqslant \norm{h}_{\mathscr R^p (\X)}$. However, we shall have no need of this sharp inequality.

{\bf Step 2} {\it (Three families of squares)}
\begin{lemma}\label{lemuni}
Let $\mathbb F$ be a  compact subset in $\C$ and $\rho >0$. There exist three families $\mathcal A = \{A_\alpha\}_{\alpha =1}^\infty$,  $\mathcal B = \{B_\beta\}_{\beta =1}^\infty$ and  $\mathcal C = \{C_\gamma\}_{\gamma =1}^\infty$, each of them consists of disjoint open squares in $\C \setminus \mathbb F$ of diameter $\rho$ or less, such that
\[\bigcup_{\alpha =1}^\infty A_\alpha \cup \bigcup_{\beta =1}^\infty B_\beta \cup \bigcup_{\gamma =1}^\infty C_\gamma = \C \setminus \mathbb F \, .\]
\end{lemma}
The construction of such families presents no difficulty. We shall not bother the  reader with an explicit construction of $\mathcal A , \mathcal B $ and $\mathcal C$.
In our application the set $\mathbb F\,$ will be the union of all bounded components of $\C \setminus \Y$. We will be dealing only with those open squares $\mathbb Q$ which intersect $\Y$, so  $\mathbb Q \cap \Y\,$ will be  nonempty and
\begin{equation}\label{eq51}
\bigcup_{\alpha =1}^\infty A_\alpha \cup \bigcup_{\beta =1}^\infty B_\beta \cup \bigcup_{\gamma =1}^\infty C_\gamma\supset \Y \cup \Upsilon \, .
\end{equation}\\
To each family $\mathcal A , \mathcal B $ and $\mathcal C$  there will correspond a mesh of cells in $\,\mathbb X\,$. We begin with the family $\mathcal A \,$.

{\bf Step 3} {\it ($p$-harmonic replacement in $\,\mathcal A$-cells)} Fix $\rho >0$ for the moment, small enough so that if an open square $\mathbb Q$ of diameter $\rho$ or less intersects the outer boundary $\Upsilon = \Upsilon_i \subset \partial \Y$ then it lies entirely in $\Y \cup \Upsilon_i \cup \Y_i$. Hence, in particular, $\mathbb Q \cap \Y$ is convex.
 Consider the mesh of open cells in $\X$
\[U_\alpha = h^{-1} (A_\alpha \cap \overline{\Y})=\{x \in \X \colon h(x) \in A_\alpha \cap \overline{\Y}\} \, . \]
It should be noted that $U_\alpha$ is empty whenever $A_\alpha \cap \Y = \emptyset$.

\begin{center}\begin{figure}[h]
\includegraphics[width=0.9\textwidth]{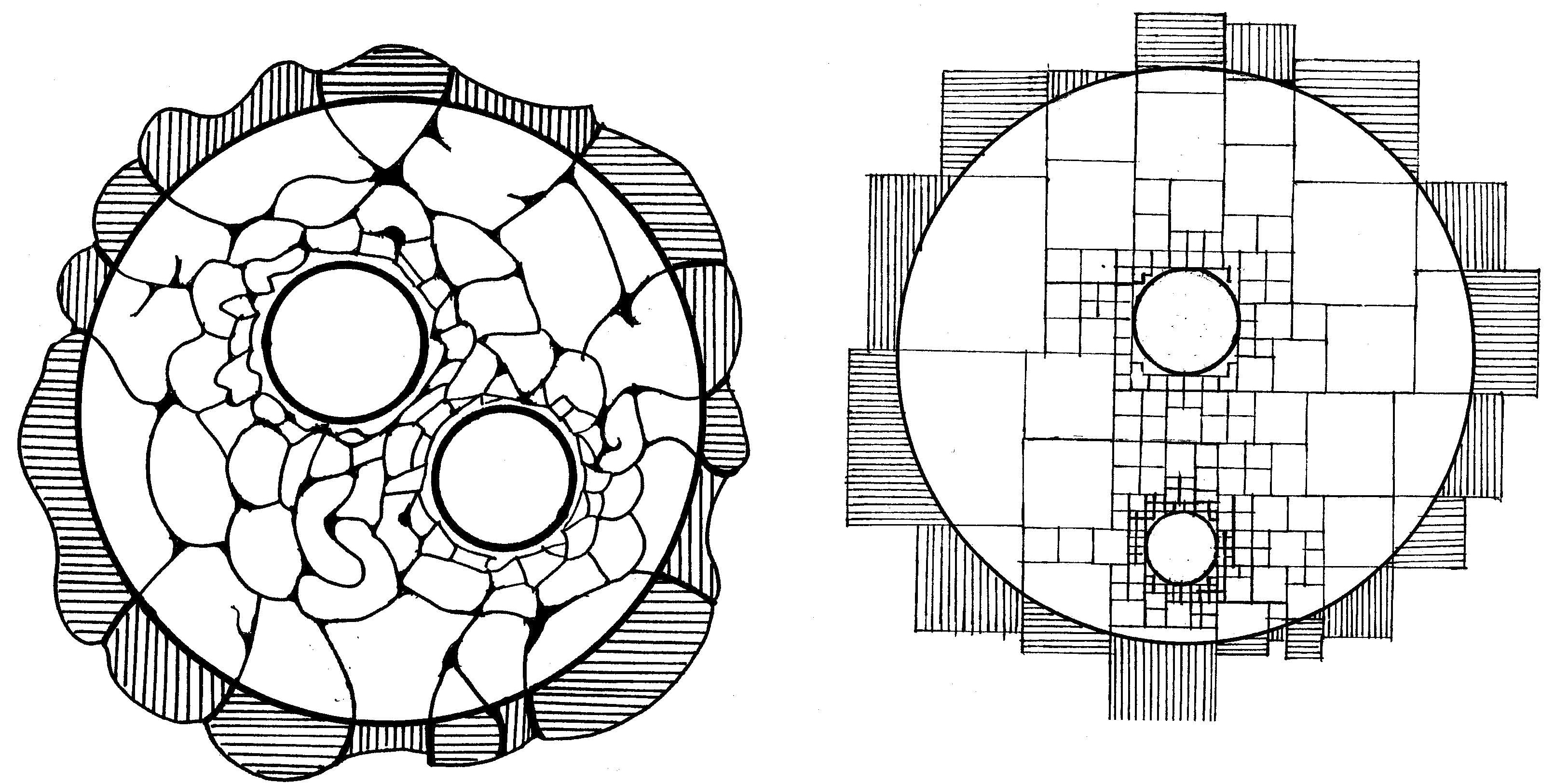} \caption{A mesh of cells in $\X$ and squares in $\Y$}\label{fig1}
\end{figure}\end{center}

We now appeal to Proposition~\ref{propha} which provides us with a mapping that we denote by  $\,h_{\mathcal A} \colon \overline{\X} \onto \overline{\Y}\,$,  such that
\begin{enumerate}[(i)]
\item\label{cond0} $\displaystyle  h_{\mathcal A} \in \widetilde{\Ho}_p (\X, \Y) \, , \quad h_{\mathcal A}=h \colon \partial \X \onto \partial \Y$
\item\label{cond1} $\displaystyle h_{\mathcal A}=h \colon \overline{\X} \setminus \bigcup_{\alpha =1}^\infty U_\alpha \onto \overline{\Y} \setminus \bigcup_{\alpha=1}^\infty (A_\alpha \cap \Y)$
\item\label{cond2} $\displaystyle h_{\mathcal A} \colon \bigcup_{\alpha=1}^\infty U_\alpha \onto \bigcup_{\alpha=1}^\infty (A_\alpha \cap \Y)$ is a $p$-harmonic diffeomorphism.
\item\label{cond3} $\displaystyle \iint_{\X} \abs{\nabla h_{\mathcal A}}^p \le \iint_{\X} \abs{\nabla h}^p$.
\end{enumerate}
The actual construction of $h_{\mathcal A}$ is accomplished step by step. First, using Proposition~\ref{propha} for $h$ and $U=U_1$, we obtain what has been denoted by $h^\ast$. In the second step we use Proposition~\ref{propha} for $h^\ast$ in place of $h$ and for $U=U_2$ to obtain $h^{\ast \ast}$. This process continues indefinitely resulting in a sequence of mappings converging weakly to $\,h_{\mathcal A}$.  Conditions~\eqref{cond1}--\eqref{cond3} are clearly satisfied. However, the condition~\eqref{cond0} calls for some explanation. Upon every finite number of steps we obtain a mapping $h^{\ast \ast \dots \ast} \in \widetilde{\Ho}_p (\X, \Y)$. These are weak $\W^{1,p}$-limits of homeomorphism in $\Ho_p (\X, \Y)$. On the other hand, as the number of steps approaches $\infty$, the mappings $h^{\ast \ast \dots \ast}$ converge weakly in $\W^{1,p} (\X, \Y)$ to $h_{\mathcal A}$. Hence $h_{\mathcal A} \in  \widetilde{\Ho}_p (\X, \Y)\,$, as desired.

 It should be recalled at this point that the family $\mathcal A$ of squares actually depends on $\rho >0\,$. We write it as $\mathcal A= \mathcal A_\rho$. Therefore, for each $\rho$ we have mappings  $\,h_{\mathcal A_\rho} \in \widetilde{\Ho}_p (\X, \Y)\,$ which satisfy conditions~\eqref{cond1}--~\eqref{cond3}. Let us now take a close look at the mappings $h_{\mathcal A_\rho}$ when $\rho$ approaches $0$. First, given $x\in \X$, we see that  $h_{\mathcal A} (x)=h(x)$ if $x$ does not belong to any of the cells $U_\alpha$. If, however, $x\in U_\alpha$ for some $\alpha$ then
\[h_{\mathcal A}(x) \in h (U_\alpha) = A_\alpha \cap \overline{\Y}\, .\]
In either case $\abs{h_\alpha (x)-h(x)} \le \rho$.
We then infer that $\norm{h_{\mathcal A_\rho}-h}_{\mathscr C (\X)} \le \rho$,
which means that $h_{\mathcal A_\delta} \rightrightarrows h$ uniformly on $\X$ as $\rho \to 0$.
On the other hand the energy of $h_{\mathcal A_\delta}$ is not greater than that of $h$. Thus $h_{\mathcal A_\delta} \to h$, weakly in $\W^{1,p}(\X)$. Hence
\[\mathcal E_\X [h] \le \lim_{\rho \to \,0} \mathcal E_{\X} [h_{\mathcal A_\rho}] \le \lim_{\rho \to 0} \mathcal E_{\X} [h] =\mathcal E_{\X} [h] \, .     \]
This shows that in fact $\lim\limits_{\rho \to \,0} \mathcal E_\X [h_{\mathcal A_\rho}]  = \mathcal E_\X [h]$. Consequently, $h_{\mathcal A_\delta} \to h $ strongly in $\mathscr R^p (\X)$.
In summary, given any $\epsilon >0$, we can choose and fix $\,\rho > 0 \,$ for the family $\,\mathcal A= \{A_\alpha\}_{\alpha=1}^\infty\,$  to satisfy
\begin{equation}\label{eq61}
\norm{h_{\mathcal A}-h}_{\mathscr R^p (\X)} \le \frac{1}{3}\, \epsilon
\end{equation}

{\bf Step 4} {\it ($p$-harmonic replacements in $\,\mathcal B$-cells)} We repeat the procedure in Step 3, with $h_{\mathcal A}$ in place of $h$ and with  the family  $\mathcal B$ in place of $\mathcal A$; with the same $\epsilon >0$. Thus, we consider the mesh of open cells in $\,\X\,$  \[\,V_\beta = h^{-1}_{\mathcal A} (B_\beta \cap \overline{\Y})= \{x\in \X \colon h_{\mathcal A} (x) \in B_\beta \cap \overline{\Y}\}\, ,\] and we obtain a mapping, denoted by $h_{\mathcal A \, \mathcal B} \colon \overline{\X} \onto \overline{\Y}$, such that
\begin{enumerate}[(i)]
\item\label{cond00} $\displaystyle  h_{\mathcal A \, \mathcal B} \in \widetilde{\Ho}_p (\X, \Y), \quad h_{\mathcal A \, \mathcal B}= h_{\mathcal A}=h \colon \partial \X \onto \partial \Y$
\item\label{cond11} $\displaystyle h_{\mathcal A \, \mathcal B}=h_{\mathcal A} \colon \overline{\X} \setminus \bigcup_{\beta =1}^\infty V_\beta \onto \overline{\Y} \setminus \bigcup_{\beta=1}^\infty (B_\beta \cap \Y)$
\item\label{cond22} $\displaystyle h_{\mathcal A \, \mathcal B} \colon \bigcup_{\beta=1}^\infty V_\beta \onto \bigcup_{\beta=1}^\infty (B_\beta \cap \Y)$ is a $p$-harmonic diffeomorphism.
\item\label{cond33} $\displaystyle \iint_{\X} \abs{\nabla h_{\mathcal A \, \mathcal B}}^p \le \iint_{\X} \abs{\nabla h_{\mathcal A}}^p \le \iint_{\X} \abs{\nabla h}^p$
\end{enumerate}
The inequality parallel to~\eqref{eq61} reads as
\[ \norm{h_{\mathcal A \, \mathcal B} - h_{\mathcal A}   }_{\mathscr R^p (\X)} \le \frac{1}{3} \, \epsilon \, .\]

{\bf Step 5} {\it ($p$-harmonic replacements in $\,\mathcal C$-cells)} The above procedure of $p$-harmonic replacements is finally  carried over to the mappings $h_{\mathcal A \, \mathcal B}$. We consider the mesh of open cells in $X$,
\[W_\gamma = h^{-1}_{\mathcal A \, \mathcal B} (C_\gamma \cap \overline{\Y})= \{x\in \X \colon h_{\mathcal A \, \mathcal B} (x) \in C_\gamma \cap \overline{\Y}\} \, .\]
The replacements result in the mapping, denoted by $h_{\mathcal A \, \mathcal B \, \mathcal C} \colon \overline{\X} \onto \overline{\Y}$, such that
\begin{enumerate}[(i)]
\item\label{cond000} $\displaystyle  h_{\mathcal A \, \mathcal B \, \mathcal C} \in \widetilde{\Ho}_p (\X, \Y)\, , \quad h_{\mathcal A \,  \mathcal B \, \mathcal C}= h_{\mathcal A\, \mathcal B}=h_{\mathcal A}=h  \colon \partial \X \onto \partial \Y$
\item\label{cond111} $\displaystyle h_{\mathcal A \, \mathcal B \, \mathcal C}=h_{\mathcal A \, \mathcal B} \colon \overline{\X} \setminus \bigcup_{\gamma =1}^\infty W_\gamma \onto \overline{\Y} \setminus \bigcup_{\beta=1}^\infty (C_\gamma \cap \Y)$
\item\label{cond222} $\displaystyle h_{\mathcal A \, \mathcal B \, \mathcal C} \colon \bigcup_{\gamma=1}^\infty W_\gamma \onto \bigcup_{\beta=1}^\infty (C_\gamma \cap \Y)$ is a $p$-harmonic diffeomorphism.
\item\label{cond333} $\displaystyle \iint_{\X} \abs{\nabla h_{\mathcal A \, \mathcal B \, \mathcal C}}^p \le \iint_{\X} \abs{\nabla h_{\mathcal A \, \mathcal B }}^p \le \iint_{\X} \abs{\nabla h}^p$
\end{enumerate}
Moreover,
\begin{equation}
\norm{h_{\mathcal A \, \mathcal B \, \mathcal C} - h_{\mathcal A \, \mathcal B}   }_{\mathscr R^p (\X)} \le \frac{1}{3} \, \epsilon \,.
\end{equation}

{\bf Step 6} {\it (The desired mapping $\,H=h_{\mathcal A \, \mathcal B \, \mathcal C}$)} Upon three consecutive replacement procedures in the $\mathcal A$-cells, $\mathcal B$-cells and $\mathcal C$-cells we finally arrive at the mapping $h_{\mathcal A \, \mathcal B \, \mathcal C}$. This map turns out to satisfy all the conditions asserted  in Proposition~\ref{keylem}, so we denote it by
\[H=h_{\mathcal A \, \mathcal B \, \mathcal C} \in \widetilde{\Ho}_p (\X, \Y)\, .\]
Let us verify those conditions. Condition~\eqref{con3} is obvious;
\[ \begin{split}
\norm{H-h}_{\mathscr R^p (\X)} &\le \norm{h_{\mathcal A \, \mathcal B \, \mathcal C}-h_{\mathcal A \, \mathcal B}}_{\mathscr R^p (\X)} + \norm{h_{\mathcal A \, \mathcal B}-h_{\mathcal A}}_{\mathscr R^p (\X)} + \norm{h_{\mathcal A }-h}_{\mathscr R^p (\X)}\\ &\le \frac{\epsilon }{3} + \frac{\epsilon }{3} + \frac{\epsilon }{3} = \epsilon \, .
\end{split}
\]
Verification of other conditions involves elementary set theoretical considerations.  Let use include them for completeness; some do not seem immediate at all. Let us begin with the following identity
\begin{equation}\label{equnion}
\bigcup_{\alpha =1}^\infty U_\alpha \cup \bigcup_{\beta =1}^\infty V_\beta \cup \bigcup_{\gamma =1}^\infty W_\gamma = \Omega
\end{equation}
 where $\Omega = \{x\in \X \colon h(x) \in \Y \cup \Upsilon\}$.
 \begin{proof}[Proof of~\eqref{equnion}]
Take any point in $\X$ such that
\[x \not \in \bigcup_{\alpha =1}^\infty U_\alpha \cup \bigcup_{\beta =1}^\infty V_\beta \cup \bigcup_{\gamma =1}^\infty W_\gamma \, .  \]
Since $x\not \in \bigcup_{\alpha =1}^\infty U_\alpha $, we see that $h(x)=h_{\mathcal A} (x)$. Since $x\not \in  \bigcup_{\beta =1}^\infty V_\beta$, we see that $h_{\mathcal A}(x)= h_{\mathcal A \, \mathcal B}(x)$. Since $x \not \in \bigcup_{\gamma =1}^\infty W_\gamma$, we see that $h_{\mathcal A \, \mathcal B}(x)= h_{\mathcal A \, \mathcal B \, \mathcal C}(x) = H(x)$. Hence $y:= h(x)= h_{\mathcal A}(x)= h_{\mathcal A \, \mathcal B}(x)=  h_{\mathcal A \, \mathcal B \, \mathcal C}(x)$. Moreover, we have
\[y=h(x) \not \in \bigcup_{\alpha =1}^\infty (A_\alpha \cap \overline{\Y})\, , \quad  y=h_{\mathcal A}(x) \not \in \bigcup_{\beta =1}^\infty (B_\beta \cap \overline{\Y}) \quad \textnormal{and} \quad y=h_{\mathcal A \, \mathcal B}(x) \not \in \bigcup_{\gamma =1}^\infty (C_\gamma \cap \overline{\Y}) \, . \]
Thus
\[
y \not \in \left(\bigcup_{\alpha =1}^\infty A_\alpha \cup \bigcup_{\beta =1}^\infty B_\beta \cup \bigcup_{\gamma =1}^\infty C_\gamma  \right) \cap \overline{\Y} \, .
\]
On the other hand, by the formula~\eqref{eq51} \,, $\,h(x) \not \in \Y \cup \Upsilon$ so $x \not \in \Omega\,$. \\

Conversely, suppose $x\in \bigcup_{\alpha =1}^\infty U_\alpha \cup \bigcup_{\beta =1}^\infty V_\beta \cup \bigcup_{\gamma =1}^\infty W_\gamma \,$. Then we have three cases

{\bf Case 1} $x\in \bigcup_{\alpha =1}^\infty U_\alpha$, which yields $h(x) \in A_\alpha \cap \overline{\Y} \subset \Y \cup \Upsilon$, so $x\in \Omega$.

{\bf Case 2} $x\not \in \bigcup_{\alpha =1}^\infty U_\alpha$ and $x\in \bigcup_{\beta =1}^\infty V_\beta$. This yields \[h(x)= h_{\mathcal A} (x) \in B_\beta \cap \overline{\Y} \subset \Y \cup \Upsilon \, , \quad x\in \Omega . \]

{\bf Case 3} $x\not \in \bigcup_{\alpha =1}^\infty U_\alpha\,$ and  $\,x\not \in \bigcup_{\beta =1}^\infty V_\beta\,$, so $\,x\in \bigcup_{\gamma =1}^\infty W_\gamma$. This yields
\[h(x)= h_{\mathcal A} (x) =  h_{\mathcal A \, \mathcal B} (x)  \in C_\gamma \cap \overline{\Y} \subset \Y \cup \Upsilon \, , \quad x\in \Omega . \]
 completing the proof of (\ref{equnion}).
\end{proof}
A care needs to be excercised in showing that  $H \colon \Omega \to \Y\,$ is injective. For this, consider two distinct points
\[x_1 , x_2 \in \Omega=  \bigcup_{\alpha =1}^\infty U_\alpha \cup \bigcup_{\beta =1}^\infty V_\beta \cup \bigcup_{\gamma =1}^\infty W_\gamma\]
and suppose, to the contrary, that $y:= H(x_1)=H(x_2)$. Since the mapping $H=h_{\mathcal A \, \mathcal B \, \mathcal C} \colon  \bigcup_{\gamma =1}^\infty W_\gamma  \onto  \bigcup_{\gamma =1}^\infty (C_\gamma \cap \Y )\;$
is injective and
\[h_{\mathcal A \, \mathcal B \, \mathcal C}  = h_{\mathcal A \, \mathcal B} \colon \X \setminus  \bigcup_{\gamma =1}^\infty W_\gamma  \to \overline{\Y} \setminus \bigcup_{\gamma =1}^\infty (C_\gamma \cap \Y )  \]
it follows that $x_1 , x_2 \not \in \bigcup_{\gamma =1}^\infty W_\gamma $. Thus we are reduced to the case $x_1, x_2 \in \bigcup_{\beta =1}^\infty U_\alpha \cup \bigcup_{\beta =1}^\infty V_\beta$ and $y= h_{\mathcal A \, \mathcal B} (x_1) = h_{\mathcal A \, \mathcal B} (x_2)$. In exactly the same way we infer that $x_1, x_2 \not \in  \bigcup_{\beta =1}^\infty V_\beta $. Thus we are further reduced to the case
\[x_1, x_2 \in  \bigcup_{\alpha =1}^\infty U_\alpha \quad \textnormal{ and } \quad y=h_{\mathcal A} (x_1) = h_{\mathcal A} (x_2)\, .\]
Just as before $x_1, x_2 \not \in  \bigcup_{\alpha =1}^\infty U_\alpha $. We arrive at the contradiction
\[x_1 , x_2 \not \in   \bigcup_{\alpha =1}^\infty U_\alpha \cup \bigcup_{\beta =1}^\infty V_\beta \cup \bigcup_{\gamma =1}^\infty W_\gamma \]
finishing the argument for injectivity.

To see that $H \colon \Omega \into \Y$, we argue in similar way; namely, let
\[x\in  \Omega= \bigcup_{\alpha =1}^\infty U_\alpha \cup \bigcup_{\beta =1}^\infty V_\beta \cup \bigcup_{\gamma =1}^\infty W_\gamma \, .\]
If $x\in \bigcup_{\gamma =1}^\infty W_\gamma$, then
\[y:= H(x)=  h_{\mathcal A \, \mathcal B \, \mathcal C} \in  \bigcup_{\gamma =1}^\infty C_\gamma \cap \Y \subset \Y \, .\]
Thus suppose $x \not \in \bigcup_{\gamma =1}^\infty W_\gamma$, so $H(x)= h_{\mathcal A \, \mathcal B}(x)$ and $x\in \bigcup_{\alpha =1}^\infty U_\alpha \cup \bigcup_{\beta =1}^\infty V_\beta  $. If $x\in  \bigcup_{\beta =1}^\infty V_\beta$, then $y= h_{\mathcal A \, \mathcal B} \in  \bigcup_{\beta =1}^\infty B_\beta \cap \Y \subset \Y$. Then suppose that $x \not \in  \bigcup_{\beta =1}^\infty V_\beta$, so $x\in  \bigcup_{\beta =1}^\infty U_\alpha$. This yields $y=h_{\mathcal A}(x) \in  \bigcup_{\alpha =1}^\infty A_\alpha \cap \Y \subset \Y$ which completes the proof of the inclusion $H(\Omega) \subset \Y$.

 Lastly, to see surjectivity $\,H \colon \Omega \onto \Y\,$ we recall that  $\,H=h_{\mathcal A \, \mathcal B \, \mathcal C} \colon \overline{\X} \onto \overline{\Y}$ and $H = h \colon \partial \X \onto \partial \Y$. Hence $H(\X) \supset \Y$. Take any  $y\in \Y$. There exists $x\in \X$ such that $H(x)=y$. Suppose  that $x\not \in \Omega=  \bigcup_{\alpha =1}^\infty U_\alpha \cup \bigcup_{\beta =1}^\infty V_\beta \cup \bigcup_{\gamma =1}^\infty W_\gamma$. This leads to a clear contradiction,
\[y=H(x)= h_{\mathcal A \, \mathcal B \, \mathcal C}  = h_{\mathcal A \, \mathcal B} = h_{\mathcal A} =h(x)\in \Y\, , \quad \textnormal{ and } \quad x \in h^{-1} (\Y \cup \Upsilon) = \Omega \, .\]
The proof of Proposition~\ref{keylem} is complete.
\end{proof}

\subsection{Proof of Theorem~\ref{thmain} for multiple connected domains}
Given $h \in \widetilde{\Ho}_p (\X, \Y)\,$, we define by induction a chain of mappings $H_0, H_1, \dots , H_\ell \in \widetilde{\Ho}_p (\X, \Y)\,$, setting  $\,H_0 \equiv h\,$. Suppose that $H_{k-1} \in \widetilde{\Ho}_p (\X, \Y)$ is already defined for some $1 \le k \le \ell$. To define $H_k$ we consider the boundary circles $\x_k \subset \partial \X$ and $\Upsilon_k \subset \partial \Y$, and recall that $h \colon \x_k \onto \Upsilon_k$. We consider the open regions
\[\Omega_k = H^{-1}_{k-1} (\Y \cup \Upsilon_k)= \{x \in \X \colon H_{k-1}(x) \in \Y \cup \Upsilon_k\}\, .\]
By Propposition~\ref{keylem} applied to $H_{k-1}$ in place of $h$ we obtain a mapping $\,H\,$, which for our purpose will be denoted by  $H_{k} \in \widetilde{\Ho}_p (\X, \Y)$. We have
\begin{enumerate}[(i)]
\item $H_k=H_{k-1} \colon \overline{\X} \setminus \Omega_k \to \partial \Y \setminus \Upsilon_k\, , \quad H_k =H_{k-1} \colon \partial \X \onto \partial \Y$
\item $H_k \colon \Omega_k \onto \Y$ is a homeomorphism
\item $\norm{H_k-H_{k-1}}_{\mathscr R^p (\X)} \le \epsilon$
\end{enumerate}
The desired sequence  $h^\ast_j \in \widetilde{\Ho}_p (\X, \Y)$ of homeomorphisms $h_j^\ast \colon \X \onto \Y\,$  converging to  $\,h\,$ strongly in $\W^{1,p} (\X, \Y)\,$ will be defined as follows. Let a sequence of positive numbers $\epsilon_j \to 0\,$ be chosen and fixed. Then we set  $\,h_j^\ast = H_\ell$, where $H_\ell$ is the last link in the chain $H_0 , H_1, \dots, H_\ell$, with $\epsilon= \epsilon_j$. We need to verify four claims.

{\bf Claim 1} We have
\[\Omega_1 \cup \Omega_2 \cup \dots \cup \Omega_\ell = \X \, .\]

Here each $\Omega_k$ is contained in $\X$, so we need only show that every $x\in \X$ lies in one of those domains. Suppose, to the contrary, that $x \not \in \Omega_1 \cup \dots \cup \Omega_\ell$. Since $x \not \in \Omega_\ell$ we see that $H_\ell (x)= H_{\ell -1} (x)$, and by induction that $H_{\ell -1}(x)= H_{\ell -2}(x)= \dots = H_0 (x)= h(x)$. These equalities, together with the definition of $\Omega_1, \dots , \Omega_\ell$ yield:
\[
\begin{split}
h(x) & = H_0 (x) \not \in \Y \cup \Upsilon_1 \\
h(x) & = H_1 (x) \not \in \Y \cup \Upsilon_2 \\
\vdots  & \hskip2cm \vdots  \\
h(x) & = H_{\ell -1} (x) \not \in \Y \cup \Upsilon_\ell
\end{split}
\]
Thus $h(x) \not \in \Y \cup \Upsilon_1 \cup \dots \cup \Upsilon_\ell = \overline{\Y}\,$, a clear contradiction.

{\bf Claim 2} $\,H_\ell \colon \X \onto \Y\,$. We have $\,H_\ell(\X)\supset H_\ell(\Omega_\ell)\supset \Y\,$. Now take $x\in \X$ and assume, to the contrary, that $H_\ell (x)\in \partial \Y$. Since $H_\ell \colon \Omega_\ell \onto \Y$ it follows that $x \not \in \Omega_\ell$ which in turn means that $H_\ell (x) = H_{\ell -1} (x) \in \partial \Y$. But $H_{\ell-1} \colon \Omega_{\ell -1} \onto \Y$, so $x \not \in \Omega_{\ell -1}$. Continuing in this way we conclude that $x\not \in \Omega_1$. This summarizes as  $x\not \in \Omega_1\cup \dots \cup \Omega_\ell= \X$, a clear contradiction.

{\bf Claim 3} $H_\ell \colon \X \onto \Y$ is injective.  Suppose, to the contrary, that $H_\ell (x_1)= H_\ell (x_2)=y \in \Y$ for some $x_1 \not = x_2$. Since $H_\ell \colon \Omega_\ell \onto \Y$ is a homeomorphism it is impossible that both $x_1$ and $x_2$ belong to $\Omega_\ell$. It is also impossible that $x_1 \in \Omega_\ell$ and $x_2 \not \in \Omega_\ell$; since otherwise we would have $H_\ell (x_1) \in \Y$ and $H_\ell (x_2) \in \partial \Y \setminus \Upsilon_\ell$. This leaves us with the only possibility that $x_1, x_2 \not \in \Omega_\ell$, so $y\in \partial \Y \setminus \Upsilon_\ell$, contradicting Claim 2.

{\bf Claim 4} Obviously, we have
\[\norm{h_j^\ast -h}_{\mathscr R^p (\X)}  \le \sum_{k=0}^\ell \norm{H_k -H_{k-1}}_{\mathscr R^p (\X)} \le \ell \, \epsilon_j  \to 0\, . \]

\end{proof}

\section{The case of simply connected domains}\label{seclas}
Theorem~\ref{thmain} fails when $\ell=1$ and $p=2$. To see this consider a sequence of M\"obius transformations $\,h_k \colon \mathbb D \onto \mathbb D\,$ of the unit disk onto itself,
\begin{equation}\label{eqmo}
h_k(z)= \frac{z+a_k}{1+ {a}_k z}\, , \qquad  a_k = \frac{k-1}{k} \to 1\, , \quad h_k (1)=1\, , \quad h_k (-1)=-1\,  .
\end{equation}
All the mappings have the same finite Dirichlet energy.
\[\mathcal E_{\mathbb D} [h_k] = \iint_{\mathbb D} \abs{Dh_k}^2 = 2\iint_{\mathbb D} J(z, h_k)\, \dtext z = 2 \pi \, .\]
They converge to the constant mapping $h(z) \equiv 1$, $c$-uniformly and weakly in $\W^{1,2} (\mathbb D)$. Obviously  $h(z)$ cannot be obtained as a strong limit in $\W^{1,2} (\mathbb D)$ of homeomorphisms of $\mathbb D$ onto itself. Nevertheless, continuous monotone extensions of $h_k$ outside $\mathbb D$ are still available, but  at the sacrifice of loosing uniform bounds (independent of $k$) of the Dirichlet  energy in any domain $\Omega \supset \overline{\mathbb D}$. Indeed, otherwise we would have a uniform bound for the modulus of continuity,  which in turn would imply that the boundary mappings $h_k \colon \partial \mathbb D \onto \partial \mathbb D$ converge uniformly; this is not the case. Thus we see that  reflecting $\mathbb D$ about the boundary circle cannot help.

\subsection{Proof of Theorem~\ref{thmain} for simply connected domains, $ p> 2 $ }
When the Sobolev exponent is greater than $2$ the homeomorphisms $h_k \in \Ho_p (\X, \Y)$ and their weak limit $h\in \widetilde{\Ho}_p (\X, \Y)$ enjoy uniform modulus of continuity, see Lemma \ref{lem212}. The reduction to the case of unit disks, $\,\X = \mathbb D\,$ and  $\,\Y = \mathbb D\,$, can be made via a bi-Lipschitz transformation as in Section \S\ref{sectcd}. This reduction is needed only to make the target domain  convex, so as to apply the $p$-harmonic replacements. The point is that there is no need to reflect $h_k$ or $h$ about the boundaries of $\X$ and $\Y$;  radial extensions outside the disks (for topological purposes) work just as well.  The rest of the proof of Theorem~\ref{thmain} runs as in the case $\ell \ge 2\,$, with hardly any changes.

\subsection{Simply connected domains with a puncture}
Let us now take on stage homeomorphisms $h_k \colon \X \onto \Y\,$, $\;h_k \in \Ho_2 (\X, \Y)\,,$ between simply connected Lipschitz domains. Assume that the mappings $h_k$ are fixed at a point $x_\circ \in \X$, namely.
\[h(x_\circ)=y_\circ \, , \qquad k=1,2, \dots , \;\;\;\;\textnormal{where}\;\; x_\circ \in \X \;\;\textnormal{and}\;\; y_\circ \in \Y\, .\]
Because of this normalization we have  an extension $\widehat{h}_k \colon \widehat{\C} \onto \widehat{\C}$ without loosing  uniform bounds in a neighborhood of $\overline{\X}$. Indeed, with the aid of  bi-Lipschitz transformation, see Section \S\ref{sectcd}\,, we are reduced to the mappings $h_k \colon \mathbb D \onto \mathbb D$ between unit disks such that $\,h_k (0)=0\,$. With this normalization the mappings  can be interpreted as  homeomorphisms $h_k \colon \mathbb D_\circ \onto \mathbb D_\circ\,$ between punctured disks $\mathbb D_\circ = \mathbb D \setminus \{0\}$. By Theorem 8.1 in~\cite{IO} we infer that
\begin{equation}\label{eqlss}
\dist^2\left[\,h_k (z)\,,\; \partial \mathbb D\,\right]\; \le \;\frac{C \,\cdot \, \mathcal E_{\mathbb D}[h_k]}{\log \left(e+ \frac{2}{\dist (z, \,\partial \mathbb D\,)}\right)}
\end{equation}
where $C$ is a universal constant. Now consider the extension $\widehat{h} \colon \widehat{\C} \to \widehat{\C}$, by the formula
\[\widehat{h}_k (z)= \left[\overline{h_k \left(1/\overline{z}\right)}\right]^{-1} \qquad \textnormal{ for } \abs{z} \ge 1\, . \]
We have $\,\mathcal E_{\mathbb D}[h_k] \le M\,$ for some constant $\,M < \infty\,$ and all $k=1,2, \dots\;$. Choose and fix $\,r>1\,$ sufficiently close to $1$ so that
\[\frac{C \cdot  M}{\log \left(e+ \frac{2r}{r-1}\right)} \le \frac{1}{4}\, .\]
Inequality~\eqref{eqlss} reads as
\[\big[\,1- \abs{h_k (1/\overline{z})} \,\big]^2 \le \frac{C\cdot \mathcal E_{\mathbb D}[h_k]}{\log \left(e+ \frac{2 \, \abs{z}}{\abs{z}-1}\right)} \le \frac{C \cdot M}{\log \left(e+ \frac{2r}{r-1}\right)} \le \frac{1}{4} \, .\]
Hence $\,\abs{\,h_k (1/ \overline{z})\,} \,\ge 1/2\,$, which gives us the desired uniform bound of the Dirichlet energy
\[\iint_{\mathbb D_r} \abs{\nabla \widehat{h}_k}^p  \le \;\widehat{C} \,\cdot\, \iint_{\mathbb D} \abs{\nabla {h}_k}^p\, , \qquad k=1,2, \dots\]
The remaining  arguments for the proof of Theorem~\ref{thmain} are the same as in the multiply connected  case. Let us summarize these findings by the following statement,
\begin{theorem}
Let $\X$ and $\Y$ be simply connected Lipschitz domains and let $h_k \colon \X \onto \Y$ be homeomorphisms in $\W^{1,2} (\X, \Y)$ normalized by $h_k (x_\circ)=y_\circ$, for some $x_\circ \in \X$ and $y_\circ \in \Y$, and all $k=1,2, \dots$ Suppose $h_k$ converge weakly in $\W^{1,2} (\X, \Y)$ to a mapping $h \in \W^{1,2} (\X, \Y)$. Then there exists a sequence of $\mathscr C^\infty$-diffeomorphisms
\[h_k^\ast \colon \X \onto \Y \, , \qquad h_k^\ast \in h+ \W^{1,2}_\circ (\X , \Y)\]
converging to $h$ strongly in $\W^{1,2} (\X, \Y)$. Moreover,
\begin{equation}
h_k^\ast (x_\circ)=y_\circ \qquad \textnormal{ for all } k=1,2, \dots
\end{equation}
\end{theorem}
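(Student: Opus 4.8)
The plan is to transfer the problem to the unit disk, regard the $h_k$ as homeomorphisms of the \emph{doubly} connected punctured disk $\mathbb{D}_\circ=\mathbb{D}\setminus\{0\}$, and then run the proof of Theorem~\ref{thmain} for multiply connected domains almost verbatim; the only new point is that the normalization at $x_\circ$ is never disturbed by the $p$-harmonic replacements. First I would pick bi-Lipschitz transformations of $\X$ and $\Y$ onto $\mathbb{D}$ as in Section~\S\ref{sectcd}: by Proposition~\ref{bipr} (and the remark on $T^\sharp$ following it) the induced composition operators are continuous in $\W^{1,2}$, so after this reduction we may assume $\X=\Y=\mathbb{D}$ and, relabelling, $x_\circ=y_\circ=0$. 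Then each $h_k\colon\mathbb{D}\onto\mathbb{D}$ is a homeomorphism with $h_k(0)=0$, $h_k\rightharpoonup h$ in $\W^{1,2}(\mathbb{D})$, and equivalently $h_k\in\Ho_2(\mathbb{D}_\circ,\mathbb{D}_\circ)$ with the boundary correspondence $\partial\mathbb{D}\rightsquigarrow\partial\mathbb{D}$, $\{0\}\rightsquigarrow\{0\}$. Thus we are in the setting $\ell=2$.

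The decisive step, carried out above, is the construction of continuous monotone extensions $\widehat h_k\colon\widehat\C\onto\widehat\C$, $\widehat h_k(\infty)=\infty$, whose energy is uniformly controlled on a slightly larger disk: Theorem~8.1 of~\cite{IO} yields~\eqref{eqlss} (and its counterpart near the puncture), hence, for a suitable fixed $r>1$, $\iint_{\mathbb{D}_r}\abs{\nabla\widehat h_k}^2\le\widehat C\iint_{\mathbb{D}}\abs{\nabla h_k}^2$ for all $k$. Here each $\widehat h_k$ is a homeomorphism on $\mathbb{D}_r\setminus\partial\mathbb{D}$ and on $\widehat\C\setminus\overline{\mathbb{D}}_r$, continuous and monotone throughout; this is precisely the structure produced by reflections in \S\ref{secext}. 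Consequently the $\widehat h_k$ converge uniformly on $\overline{\mathbb{D}_\circ}$ (uniform energy bound together with $\ell=2$, cf.~\eqref{r2} and its analogue at the puncture), the weak limit $\widehat h$ is monotone, and the only effect of the lack of Lipschitz regularity at $0$ is absorbed by~\eqref{eqlss}. One must check here that the punctured disk genuinely satisfies every hypothesis invoked in the proof of Theorem~\ref{thmain}: its boundary components $\partial\mathbb{D}$ and $\{0\}$ are neighbourhood retracts, so Lemmas~\ref{lem211} and~\ref{lem212} apply, with~\eqref{eqlss} standing in for the Lipschitz estimate near $0$.

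The remaining steps reproduce the multiply connected argument. One covers $\Y\cup\partial\mathbb{D}$ by the three families $\mathcal A,\mathcal B,\mathcal C$ of Lemma~\ref{lemuni}, performs the $p$-harmonic replacements of Propositions~\ref{propha} and~\ref{keylem} over the induced cells, and carries out the boundary replacement over the (single) outer circle $\partial\mathbb{D}$. For $p=2$ these $p$-harmonic replacements are classical harmonic replacements, so that Theorem~\ref{AS} becomes the Rad\'o--Kneser--Choquet theorem and the Hurwitz-type input becomes the classical Hurwitz theorem. Exactly as in the proof of Theorem~\ref{thmain}, this produces homeomorphisms $h_k^\ast\colon\mathbb{D}\onto\mathbb{D}$ — upgraded, by the usual final smoothing, to $\mathscr C^\infty$-diffeomorphisms as in~\cite{IKO2},~\cite{IKO3} — with $h_k^\ast=h$ on $\partial\mathbb{D}$, so $h_k^\ast\in h+\W^{1,2}_\circ$ (a point carries zero $2$-capacity), and $\norm{h_k^\ast-h}_{\mathscr R^2(\mathbb{D}_\circ)}\le C\epsilon_k\to0$, hence $h_k^\ast\to h$ strongly in $\W^{1,2}$. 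The normalization persists for the following reason: the only bounded component of $\C\setminus\Y$ is the single point $\{0\}$, so one takes $\mathbb F=\{0\}$ in Lemma~\ref{lemuni}; then no square of $\mathcal A\cup\mathcal B\cup\mathcal C$ contains $0$, hence no cell $h^{-1}(\mathbb Q\cap\overline\Y)$ meets $x_\circ=h^{-1}(0)$, and every replacement in Propositions~\ref{propha} and~\ref{keylem} leaves the value at $0$ untouched. The same holds under the weak (equivalently, by uniform convergence, pointwise) limits defining $h_{\mathcal A},h_{\mathcal A\mathcal B},\dots$, so $h_k^\ast(0)=h(0)=0$. Transferring back through the bi-Lipschitz maps, which preserve both the boundary values and the value at $x_\circ$, finishes the proof.

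The main obstacle, I expect, is the content of the second paragraph: producing the extensions $\widehat h_k$ with a $k$-uniform energy bound across $\partial\mathbb{D}$, which is impossible without the single-point normalization — the M\"obius example~\eqref{eqmo} shows precisely that reflecting about the boundary fails in its absence — and which is supplied by Theorem~8.1 of~\cite{IO}. Once this is in hand, $\mathbb{D}_\circ$ behaves, for every purpose of the proof of Theorem~\ref{thmain}, like a genuine doubly connected Lipschitz domain, and what remains is bookkeeping together with the verification that the degenerate boundary component $\{0\}$ creates no difficulty.
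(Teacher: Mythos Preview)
Your overall plan---reduce to $\X=\Y=\mathbb D$ via \S\ref{sectcd}, view $h_k$ as maps of the punctured disk, invoke Theorem~8.1 of~\cite{IO} to get the estimate~\eqref{eqlss} and hence a reflection $\widehat h_k$ to $\mathbb D_r$ with a $k$-uniform energy bound, and then run the replacement machinery of \S3---is exactly the paper's route. You have also correctly located the crux: the uniform energy bound across $\partial\mathbb D$ is precisely what the single interior normalization buys, and~\eqref{eqmo} shows it is unavailable otherwise.

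The one genuine gap is your argument that the normalization $h_k^\ast(0)=0$ is preserved \emph{by the construction}. Taking $\mathbb F=\{0\}$ in Lemma~\ref{lemuni} does ensure that no square meets $0$, and hence that every replacement leaves the value at $0$ untouched. But it leaves more than the value at $0$ untouched: it leaves $h$ unchanged on the entire continuum $h^{-1}(0)$. Since $h$ is only monotone, $h^{-1}(0)$ may well be a nondegenerate continuum (a small disk about the origin can collapse to $0$ in the weak limit of normalized homeomorphisms with bounded Dirichlet energy). In that case your $H=h_{\mathcal A\mathcal B\mathcal C}$ is constant $=0$ on $h^{-1}(0)$ and is therefore \emph{not} a homeomorphism of $\mathbb D$ onto $\mathbb D$; your identity~\eqref{equnion} only yields $\Omega=\{x:h(x)\neq 0\}$, not all of $\X$. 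Doing only the single boundary step for $\partial\mathbb D$ does not repair this, and the second step of the $\ell=2$ iteration (for the ``boundary'' $\{0\}$) cannot be run through Proposition~\ref{keylem} as written, since Step~1 there uses a reflection about a genuine circle.

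The paper resolves this differently and more simply: it does \emph{not} claim that the replacements preserve the normalization. The construction is carried out so as to produce diffeomorphisms $h_k^\ast:\mathbb D\onto\mathbb D$ with $h_k^\ast\to h$ strongly (allowing cells that may contain $0$); then, because $h_k^\ast(0)\to h(0)=0$, each $h_k^\ast$ is post-composed with a small smooth change of variables in $\X$ to force $h_k^\ast(0)=0$ without spoiling the strong convergence (see the Remark following the theorem). Your proposal would be complete if you replaced the ``$\mathbb F=\{0\}$'' argument by this post-hoc adjustment.
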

\begin{remark}
The reader may be concerned that our proof of Theorem~\ref{thmain} does not guarantee that $h_k^\ast (x_\circ)=y_\circ$. However, since $h_k^\ast (x_\circ) \to h(x_\circ)=y_\circ$, one can alter slightly each $h_k^\ast$,  by a smooth change of variables in $\X$, to achieve such a normalization.
\end{remark}

\subsection{A normalization at three boundary points}\label{secnat}
Consider homeomorphisms $h_k \colon \X \to \Y$ between bounded simply connected Lipschitz domains in the Sobolev space $\W^{1,2}(\X, \Y)$, normalized by three conditions at the boundary:  $\,h(x_1)=y_1$, $h(x_2)=y_2$ and $h(x_3)=y_3\,,$ where $x_1, x_2, x_3$ are given distinct points in $\partial \X$ and $y_1, y_2, y_3 \in \partial \Y$. We assume that both triples $(x_1, x_2, x_3)$ and $(y_1, y_2, y_3)$ are positively oriented along $\partial \X$ and $\partial \Y$, respectively. We transform the domains $\X$ and $\Y$ via bi-Lipschitz mappings into equilateral triangles in which $(x_1, x_2, x_3)$ and $(y_1, y_2, y_3)$ are their vertices.  As in the multiply connected  case, we will need to extend the mappings to a neighborhood $\,\mathbb X_+ \supset \overline{\mathbb X}\,$. To build this neighborhood, we reflect $\,\mathbb X\,$ about its sides. This results in three adjacent triangles. We then reflect the adjacent triangles. Eventually we come up with 12 equilateral triangles surrounding $\,\mathbb X\,$ which together with $\,\overline{\mathbb X}\,$ make the neighborhood $\,\mathbb X_+\,$.
The same reflection procedure is  made for $\Y$. We then extend each homeomorphism $\,h_k \colon \X \onto \Y\,$ in accordance with the above reflections  and define continuous monotone extensions $\widehat{h}_k \colon \X_+ \onto \Y_+$. The way the extensions are made secures a uniform energy bound
\[ \iint_{\X_+} \abs{\nabla \widehat{h}_k}^2 = 13\,  \iint_{\X} \abs{\nabla \widehat{h}_k}^2  \le M \;,\;\;\textnormal{for some}\; M \; \; \textnormal{and all }\;\;\; k=1,2, \dots\]
By the same arguments as for  Theorem~\ref{thmain}  we draw the following conclusion
\begin{theorem}
Let $\X$ and $\Y$ be simply connected Lipschitz domains and let $h_k \colon \X \onto \Y$ be homeomorphisms of Sobolev class $\W^{1,2}(\X, \Y)$. \footnote{Note that continuous extensions $h_k \colon \overline{\X} \onto \overline{\Y}$, $k=1,2, \dots$, exist automatically.}
Suppose $h_k$ are normalized at three boundary points;
\[h(x_1)=y_1\, , \quad h(x_2)=y_2\, , \quad h(x_3)=y_3\]
and that $h_k$ converge weakly in $\W^{1,2} (\X, \Y)$ to a mapping $h\in \W^{1,2}(\X, \Y)$. Then there exist  $\mathscr C^\infty$-diffeomorphisms
\[h_k^\ast \colon \X \onto \Y \, , \qquad h_k^\ast \in h+ \W^{1,2}_\circ (\X , \Y)\]
converging to $h$ strongly in $\W^{1,2} (\X, \Y)$ and uniformly on $\overline{\X}$.
\end{theorem}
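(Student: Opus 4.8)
The plan is to reduce everything to a pair of equilateral triangles and then to run the proof of Theorem~\ref{thmain} essentially verbatim; the only new ingredient is a reflection scheme replacing the reflection-about-circles construction of \S\ref{secext}, and it is available precisely because of the normalization at three boundary points. First I would invoke Proposition~\ref{bipr} (and its analogue for the source domain) to choose orientation-preserving bi-Lipschitz homeomorphisms $\Psi\colon\overline{\X}\onto\overline{T}$ and $\Phi\colon\overline{\Y}\onto\overline{T'}$ onto closed equilateral triangles, so that $\Psi$ carries $(x_1,x_2,x_3)$ onto the vertices of $T$ and $\Phi$ carries $(y_1,y_2,y_3)$ onto the vertices of $T'$ (both triples being positively oriented). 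Since the induced composition operators are continuous on $\W^{1,2}$, the transported homeomorphisms $g_k:=\Phi\circ h_k\circ\Psi^{-1}\in\Ho_2(T,T')$ still converge weakly in $\W^{1,2}(T,T')$ to $g:=\Phi\circ h\circ\Psi^{-1}$, and each $g_k$ sends the vertices of $T$ to the corresponding vertices of $T'$. It is therefore enough to prove the theorem for $T$, $T'$ and the $g_k$ in place of $\X$, $\Y$ and the $h_k$.

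Let $T_+$ be the neighborhood of $\overline{T}$ obtained by reflecting $T$ about its three sides and then reflecting the three resulting triangles about their outer sides, so that $T_+$ is the union of $\overline{T}$ with $12$ equilateral triangles and $\overline{T}\Subset T_+$; build $T'_+\Supset\overline{T'}$ by the same scheme. Because $g_k$ sends vertices to vertices, it maps each side of $T$ homeomorphically onto the corresponding side of $T'$, so the reflected copies of $g_k$ (each obtained by pre- and post-composing $g_k$ with a Euclidean reflection) match continuously along common sides and glue to a homeomorphism $\widehat{g}_k\colon T_+\onto T'_+$, with $\iint_{T_+}|\nabla\widehat{g}_k|^2=13\iint_T|\nabla g_k|^2$, which is uniformly bounded. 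The decisive point is that this survives the whole construction: every map produced in the $p$-harmonic replacements of \S\ref{secphar} — the limit $g$, each modification $g^\ast$, and the auxiliary homeomorphisms used to carry out a replacement — agrees with $g$ on $\partial T$, hence still sends vertices to vertices, hence extends by reflection to $T_+$ with Dirichlet energy equal to $13$ times its energy on $T$. Since $\overline{T}\Subset T_+$, the interior modulus-of-continuity estimate for $\W^{1,2}$ homeomorphisms — the analogue of~\eqref{r2} on compact subsets, which needs no hypothesis on connectivity — then gives each of these extended maps a uniform $\log$-H\"older bound on $\overline{T}$. Consequently all the ``uniform convergence on $\overline{T}$'' facts used in the proof of Theorem~\ref{thmain} are at hand: $\widehat{g}_k\rightrightarrows\widehat{g}$ uniformly on $\overline{T}$ with $\widehat{g}$ monotone and fixing the vertices, $g\in\widetilde{\Ho}_2(T,T')$ is continuous on $\overline{T}$ (hence lies in $\mathscr R^2(T)$), and — the $\widehat{g}_k$ being orientation-preserving, hence of nonnegative Jacobian, bounded in $\W^{1,2}(T_+,\C)$ and weakly convergent in $\W^{1,2}(T,\C)$ — the hypotheses of the weak-continuity-of-Jacobian lemma of \S\ref{secext} are met, so~\eqref{averJac} holds; the same applies to every weakly convergent sequence of homeomorphisms arising inside the replacements.

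With these facts secured I would carry out the proof of Theorem~\ref{thmain} for multiply connected domains verbatim, with $p=2$. The target $T'$ is convex, so the harmonic replacements of Proposition~\ref{propha} apply to inner and boundary cells alike: Theorem~\ref{AS} supplies the diffeomorphic harmonic extensions, Theorem~\ref{winlem} the energy-decreasing continuous harmonic extension, Hurwitz's Theorem~\ref{huthm1} passes diffeomorphism to the limit, and~\eqref{averJac} rules out the alternative $J\equiv0$. Since $\partial T$ has a single component, the induction over boundary circles in \S\ref{secphar} collapses to a single application of the analogue of Proposition~\ref{keylem}, yielding, for any prescribed $\epsilon_j\to0$, $\mathscr C^\infty$-diffeomorphisms $g_j^\ast\colon T\onto T'$ with $g_j^\ast\in g+\W^{1,2}_\circ(T,T')$, $g_j^\ast=g$ on $\partial T$, and $\norm{g_j^\ast-g}_{\mathscr R^2(T)}\le\epsilon_j$, whence $g_j^\ast\to g$ strongly in $\W^{1,2}(T,T')$ and uniformly on $\overline{T}$. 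Transporting back, $h_j^\ast:=\Phi^{-1}\circ g_j^\ast\circ\Psi$ is a diffeomorphism of $\X$ onto $\Y$ with $h_j^\ast\in h+\W^{1,2}_\circ(\X,\Y)$ converging to $h$ strongly in $\W^{1,2}(\X,\Y)$ and uniformly on $\overline{\X}$; the normalization $h_j^\ast(x_i)=y_i$ is inherited, since $h(x_i)=\lim_k h_k(x_i)=y_i$ (the uniform convergence $h_k\rightrightarrows h$ on $\overline{\X}$ being a by-product of the above) and $h_j^\ast=h$ on $\partial\X$. The $\mathscr C^\infty$-regularity in the interior is obtained exactly as in Theorem~\ref{thmain}, or, if preferred, by a final smoothing of the change of variables as in~\cite{IKO2}.

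I expect the only step requiring genuine care — and the one that separates this $p=2$, simply connected case from both the multiply connected case and the \emph{unnormalized} disk, where~\eqref{eqmo} shows the statement to be false — to be the reflection step: that Sobolev reflection across the Lipschitz sides of $T$ with matching boundary traces really does produce $\W^{1,2}$ homeomorphisms of $T_+$, and, above all, the observation that because every map manufactured in the proof retains the same values on $\partial T$, the three-point normalization propagates automatically, so that the uniform interior modulus of continuity on $\overline{T}$ is available at every stage. This is exactly the mechanism the single disk lacks; once it is in place, the rest is the multiply connected proof without change.
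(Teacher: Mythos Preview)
Your proposal is correct and follows essentially the same route as the paper: reduce via bi-Lipschitz transformation to equilateral triangles with the normalized points as vertices, build $T_+$ and $T'_+$ from twelve reflected copies so that $\overline{T}\Subset T_+$, extend the $g_k$ by reflection with energy multiplied by $13$, and then run the multiply connected proof verbatim. One minor imprecision: the glued extensions $\widehat{g}_k\colon T_+\to T'_+$ are only continuous monotone maps, not homeomorphisms, since the boundary map $g_k\colon\partial T\to\partial T'$ is monotone but need not be injective --- but monotonicity is all that \S\ref{secsqu} and \S\ref{secphar} actually use, so nothing is lost.
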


 \bibliographystyle{amsplain}

 \end{document}